\newcommand{\dnn}{\textup{DNN}}
\title{Minimax Optimal Deep Neural Network Classifiers Under
	Smooth Decision Boundary}
\author{%
	{\large Tianyang Hu$^{1}$~~~~Ruiqi Liu$^{2}$~~~~Zuofeng Shang$^{3}$~~~~Guang Cheng$^{4}$}\\[1em]
	$^1$Department of Statistics, Purdue University \\
	$^2$Department of Mathematics and Statistics, Texas Tech University\\
	$^3$Department of  Mathematical Sciences, New Jersey Institute of Technology\\
	$^4$Department of Statistics, University of California, Los Angeles\\
}
\begin{document}
	\maketitle
	
	\begin{abstract}%
		Deep learning has gained huge empirical successes in large-scale classification problems. In contrast, there is a lack of statistical understanding about deep learning methods, particularly in the minimax optimality perspective. For instance, in the classical smooth decision boundary setting, existing deep neural network (DNN) approaches are rate-suboptimal, and it remains elusive how to construct minimax optimal DNN classifiers. Moreover, it is interesting to explore whether DNN classifiers can circumvent the ``curse of dimensionality'' in handling high-dimensional data. The contributions of this paper are two-fold. First, based on a localized margin framework, we discover the source of suboptimality of existing DNN approaches.
		Motivated by this, we propose a new deep learning classifier using a divide-and-conquer technique: DNN classifiers are constructed on each local region and then aggregated to a global one. 
		We further propose a localized version of the classical Tsybakov's noise condition, under which statistical optimality of our new classifier is established. Second, we show that DNN classifiers can adapt to low-dimensional data structures and circumvent the ``curse of dimensionality'' in the sense that the minimax rate only depends on the effective dimension, potentially much smaller than the actual data dimension.
		Numerical experiments are conducted on simulated data to corroborate our theoretical results.
	\end{abstract}


	\section{Introduction}
	Deep learning has achieved many breakthroughs in modern classification tasks, especially for natural images \citep{deng2009imagenet, krizhevsky2012imagenet, he2016deep, nguyen2017iris}.
	In contrast to the huge empirical success, statistical understanding of DNN classifiers is still lacking as to why neural networks perform better than traditional classification methods, particularly for high-dimensional structured data. 
	Most theoretical works revolving around neural network classifiers focus on the generalization perspective, developing error bounds for trained classifiers on unseen data \citep{vapnik1999overview, bousquet2002stability, zhang2016understanding,cao2019generalization}. 
	However, derivation of generalization bounds mostly relies on the complexity of the DNN class, which is often independent of the data distribution.
	Deep learning is not better than traditional methods for every data set and the success of DNN classifiers should not only be contributed to the effectiveness of neural networks, but also task-specific properties such as data structures, noise distributions, etc. 
	As a result, almost all generalization error bounds are vacuous and often don't reflect the actual generalization performance \citep{dziugaite2017computing, jiang2019fantastic}. 
	Without involving the concept of minimax optimality, such bounds may provide little guidance for practical applications.

	There is a rich literature on minimax optimal estimation using DNNs \citep{schmidt2020nonparametric, schmidt2019deep, liu2019optimal, kohler2022estimation, liu2022dive}, under which \emph{task-specific} and \emph{statistical optimal} results can be derived.
	By making specific assumptions on the data, the performance of an estimation method can be sharply characterized by the convergence rate of the estimation error (upper bound).
	In the mean time, a task-specific lower bound on the convergence rate can also be derived, independent of the estimation method. 
	When the convergence rate of a certain method matches the lower bound, statistical optimality is achieved. 
	Furthermore, for high-dimensional structured data, if the convergence rate is ``dimension-free'' in the sense that it doesn't depend on the original data dimension $d$, but on an effective dimension $d^*\ll d$, the ``curse of dimensionality'' can be alleviated. 
	It is interesting to explore whether minimax optimal DNN approaches with dimension-free properties can be established in the classification setting. 
	Existing literature on this front is limited, and most works either treat classification as regression by estimating the conditional class probability instead of the decision boundary \citep{kohler2020rate, kohler2020statistical, bos2021convergence, hu2021understanding, shang2022class1, shang2022class2, shang2022class3} or settle for an upper bound on the misclassification risk \citep{kim2021fast, steinwart2007fast, hamm2020adaptive}. 
	Unlike regression problems where one intends to estimate the unknown regression functions, the goal of classification is to recover the unknown \emph{decision boundary} separating different classes. Properties of the decision boundary are critical and the analysis is more demanding. 
	In literature, the smooth decision boundary assumption and Tsybakov's low noise condition \citep{mammen1999smooth} are commonly adopted, whereas minimax optimal DNN classifiers under these assumptions are nonexistent, not even for simpler classification methods such as support vector machine (SVM) \citep{hamm2020adaptive}.
	{There is a clear gap between DNN classifier's empirical success and its suboptimality.}
	In this work, we aim to address the issue of rate suboptimality in the smooth boundary setting and also provide new insights to how DNNs avoid the ``curse of dimensionality". 
	The main contributions of this paper are: 
	\begin{itemize}
		\item
		By dissecting the 0-1 loss excess risk of DNN classifiers, we identify the potential origin of the suboptimality. 
		Motivated by this, we propose a new deep learning classifier using a divide-and-conquer technique: DNN classifiers are constructed on each local region and then aggregated to a global one. 
		We further propose a novel \emph{localized} version of the classical Tsybakov's noise condition, which enables us to develop both lower and upper bounds on the convergence rate and establish statistical optimality for DNN classifiers with proper architectures, i.e., depth, width, sparsity, etc. 
		
		\item
		In the proposed localized smooth decision boundary setting, we show that DNN classifiers can adapt to low-dimensional structures underlying the high-dimensional data and circumvent the ``curse of dimensionality" in the sense that the optimal rate of convergence only depends on some effective dimension $d^*$, potentially much smaller than the data dimension.
	\end{itemize}

	This work is \emph{the first} to establish the \emph{minimax optimal} convergence rate for DNN classifiers under the classical smooth decision boundary setting, with a novel localized Tsybakov's noise condition. 
	The proposed separation condition facilitates a finer-grained understanding of classification, which, DNN classifiers can take full advantage of by utilizing the divide-and-conquer strategy. 
	The optimality proof relies on delicate constructions of DNNs where the representation power, structural flexibility, composition nature all play critical roles.
	The ability to adapt to low-dimensional data structures while achieving the optimal convergence rate showcases the power of deep learning and provides insights for understanding DNN's empirical success in classifying high-dimensional data.

	\section{Preliminary}
	
	Let $\QQ$ be the Lebesgue measure on $\RR^d$. 
	For any function $f(\bx):\cX\to\RR$, denote  
	$\|f\|_\infty=\sup_{\bx\in\cX}|f(\bx)|$ and $\|f\|_p=(\int_{\cX} |f(\bx)|^p d\bx)^{1/p}$ for $p\in \NN$. 
	For a vector $\bx$, $\|\bx\|_p$ denotes its $p$-norm, for $1\leq p \leq \infty$. 
	$L_p$ and $l_p$ are used to distinguish function norms and vector norms.
	For two given sequences of real numbers $\{a_n\}_{n\in \NN}$ and $\{b_n\}_{n\in \NN}$, we write $a_n\lesssim b_n$ if there exists a constant $C>0$ such that $a_n\le C b_n$ for all sufficiently large $n$. Let $\Omega(\cdot)$ be the counterpart of $O(\cdot)$ that $a_n=\Omega(b_n)$ means $a_n\gtrsim b_n$.
	In addition, we write $a_b\asymp b_n$ if $a_n\lesssim b_n$ and $a_n\gtrsim b_n$.
	For $a,b \in \RR$, denote $a\vee b =\max\{a, b\}$ and $a\wedge b =\min\{a, b\}$.   
	We use $\II$ to denote the indicator function.
	For a set $G\subset \RR^d$, denote $\partial G$ as its boundary and $G^\circ$ as its interior. 
	For two sets $G_1, G_2\subset \RR^d$, let their symmetric difference be $
	d_\triangle(G_1, G_2)= \QQ(G_1\triangle G_2)= \QQ\left((G_1\backslash G_2)\cup (G_2\backslash G_1)\right).$

	\subsection{Binary classification} 
	Denote $\bx\in\cX\subset\RR^d$ as the feature vector and $y\in\{-1, 1\}$ as the label. 
	Assume the classes are balanced, i.e., $\PP(y=1)=1/2$, and 
	\[
	\bx\ |\ y=1\sim p(\bx), \quad \bx\ |\ y=-1\sim q(\bx),
	\] where $p, q$ are two bounded densities on $\cX$. 
	The goal of classification is to find the optimal classifier $C^*$  with the lowest expected \emph{0-1 loss} 
	\[
	R(C) = \EE_{\bx}[\II\{C(\bx)\ne y\}].
	\]
	$C^*$ corresponds to the optimal decision region $G^* := \{\bx\in\cX, p(\bx)- q(\bx)\ge 0\}$ by assigning label 1 if $\bx\in G^*$ and label $-1$ otherwise.
	Equivalently, we can consider real-valued functions $f$ and classification can be done by $C(\bx) = \mbox{sign}(f(\bx))$. 
	With a slight abuse of notation, we use $R(C), R(G)$ and $R(f)$ interchangeably. 
	
	Without knowing the data distribution, $C^*$ is not directly calculable. 
	Instead, we observe data set $\cD=\{(\bx_i,y_i)\}_{i=1}^n$ and estimate $C^*$ by the empirical risk minimizer, i.e., 
	\[
	\hat{C}=\argmin R_n(C), \mbox{ where } R_n(C)=\sum_{i=1}^n \II\{C(\bx_i)\ne y_i\}.
	\]
	In practice, the empirical 0-1 loss minimizer is not computationally feasible because minimizing such loss over $\cC_n$ is NP hard \citep{bartlett2006convexity}. 
	An alternative approach is to replace the 0-1 loss with other computationally friendly surrogate losses $\phi:\RR\to \RR_+$, e.g. hinge loss $\phi(z) = (1-z)_+ = \max\{1-z, 0\}$, logistic loss $\phi(z) = \log(1+\exp(-z))$, etc. 
	Even though surrogate losses are used in practice, 0-1 loss gives the most fundamental characterization of the classification problem and is of great theoretical interest. 
	This work focuses on 0-1 loss and surrogate losses such as hinge loss, cross-entropy are reserved for future work. 
	
	The performance of an estimated classifier $\hat{C}$ can be evaluated by its \emph{excess risk} defined as  
	\[
	\cE(\hat{C}, C^*) = R(\hat{C}) - R(C^*),
	\]
	which measures the closeness between $\hat{C}$ and $C^*$ in terms of the expected 0-1 loss. 
	Convergence rate refers to how fast $\cE(\hat{C}, C^*)$ converges to zero with respect to $n$ and it is the focus on this paper. 
	Knowing the relationship between $C^*$ and $G^*$, classification can also be seen as {nonparametric estimation of sets}, and the 0-1 excess risk of an estimator $\hat{G}$ can be written as
	\begin{equation*}
		\cE(\hat{G}, G^*)=d_{p, q}(\hat{G},G^*) =\int_{\hat{G}\triangle G^*}|p(\bx) - q(\bx)|\QQ(d\bx).
	\end{equation*}
	
	There are two key factors governing the rate of convergence in classification: the complexity of the decision boundary and how concentrated the data are near the decision boundary, which we will introduce in Section \ref{sec:smooth_boundary} and \ref{sec:separation} respectively.

	\subsection{Smooth boundary assumption} \label{sec:smooth_boundary}
	In statistics literature, one of the most common boundary assumptions is called the smooth boundary fragments \citep{mammen1999smooth, tsybakov2004optimal, korostelev2012minimax}. 
	Recall that a function has H\"older smoothness index $\beta$ if all partial derivatives up to order $\lfloor \beta \rfloor$ exist and are bounded and the partial derivatives of order $\lfloor \beta \rfloor$ are $\beta - \lfloor \beta \rfloor$ H\"older continuous, where $\lfloor \beta \rfloor$ denotes the largest integer strictly smaller than $\beta.$ The ball of $\beta$-H\"older functions with radius $R$ is then defined as
	\begin{align}
		\label{eqn:smooth_function}
		\cH(d,\beta,R) = \Big\{ 
		&f:\mathbb{R}^d \rightarrow \mathbb{R} : 
		&\sum_{\balpha : |\balpha| < \beta}\|\partial^{\balpha} f\|_\infty + \sum_{\balpha : |\balpha |= \lfloor \beta \rfloor } \, \sup_{\stackrel{\bx, \by \in D}{\bx \neq \by}}
		\frac{|\partial^{\balpha} f(\bx) - \partial^{\balpha} f(\by)|}{|\bx-\by|_\infty^{\beta-\lfloor \beta \rfloor}} \leq R
		\Big\},
	\end{align}
	where we use multi-index notation, i.e., $\partial^{\balpha}= \partial^{\alpha_1}\ldots \partial^{\alpha_r}$ with $\balpha =(\alpha_1, \ldots, \alpha_r)\in \mathbb{N}^r$ and $|\balpha| :=|\balpha|_1.$ $R$ controls the radius of the function space and as long as $R<\infty$, the function space doesn't fundamentally change. For simplicity, we omit the radius and write $\cH(d,\beta,R)$ as $\cH(d,\beta)$.
	
	For $d\ge 2$, let $\bx_{-d} = (x_1,\cdots,x_{d-1})$.
	The smooth boundary fragment setting assumes the optimal set $G^*$ to have the form
	\begin{align}
		\label{eqn:boundary_fragment}
		G_{f^*} := \{\bx\in\RR^d: f^*(\bx_{-d})-x_d\ge 0, f^*\in\cH(d, \beta)\}.
	\end{align} 
	Denote $\cG^*_\beta = \{G_{f^*}: f^*\in\cH(d, \beta) \}$ to be all such sets (hypothesis class). 
	Due to the smoothness of $f^*$, the complexity of the hypothesis class $\cG^*_\beta$, which is usually measured by \textit{bracketing entropy}, can be controlled. To be more specific, for any $\delta>0$, the bracketing number $\cN_B(\delta, \cG, d_\triangle)$ is the minimal number of set pairs $(U_j, V_j)$ such that 
	\begin{itemize}
		\item [(a)]
		For each $j$, $U_j\subset V_j$ and $d_\triangle(U_j, V_j)\le \delta$;
		\item[(b)]
		For any $G\in \cG$, there exists a pair $(U_j, V_j)$ such that $U_j\subset G\subset V_j$.
	\end{itemize}
	Simply denote $\cN_B(\delta)=\cN_B(\delta, \cG, d_\triangle)$ if no confusion arises. 
	The bracketing entropy is defined as $H_B(\delta)=\log{\cN_B(\delta, \cG, d_\triangle)}$. In the smooth boundary fragment case, \citet{mammen1999smooth} showed that 
	\[
	H_B(\delta,\cG_{\beta}^* ,d_\triangle) \lesssim \delta^{-\frac{d-1}{\beta}}.
	\]
	For more flexibility, the boundary fragments assumption can be extended to be more general that $G^*$ consists of unions and intersections of smooth hyper-surfaces defined as in \eqref{eqn:boundary_fragment} \citep{kim2021fast, tsybakov2004optimal}. 
	In this work, we focus on the basic case \eqref{eqn:boundary_fragment} for simplicity.

	\subsection{Separation assumption} \label{sec:separation}
	The following {Tsybakov's noise condition} \citep{mammen1999smooth} quantifies how separated $p$ and $q$ are near the decision boundary:
	\begin{itemize}
		\item[(N)] There exists constants $c, T>0$ and $\kappa\in[0,\infty]$ such that for any  $0\le t\le T$,
		$$\QQ\left(\{\bx:|p(\bx)-q(\bx)|\le t\}\right)\le c t^\kappa.$$
	\end{itemize}
	(N) is a population-level assumption and $\kappa$ is referred to as the {noise exponent}. (N) trivially holds for $\kappa= 0$. The bigger the $\kappa$, the more separated the data around the decision boundary, and hence, the easier the classification. 
	In the extreme case where $p, q$ have different supports, $\kappa$ can be arbitrarily large ($\infty$) and the boundary recovery is the easiest. To another extreme where $\QQ\{\bx\in\cX: p(\bx)=q(\bx)\}>0$, there exists a region where different classes are indistinguishable. In this case, $\kappa = 0$ and the optimal decision boundary is hard to estimate in that region.

	Under the smooth boundary fragment assumption \eqref{eqn:boundary_fragment} with smoothness $\beta$ and the Tsybakov's noise condition (N) with noise exponent $\kappa$, \cite{mammen1999smooth} showed that the optimal rate of convergence for the 0-1 loss excess risk is 
	\begin{equation}
		\label{eqn:optimal_rate_classification}
		\inf_{C_n\in\cC}\sup_{G^*\in \cG^*_\beta} \cE(C_n, G^*) = \Omega\rbr{n^{-\frac{\beta(\kappa+1)}{\beta(\kappa+2) + (d-1)\kappa}}},
	\end{equation}
	where $\cC$ can be any classifier family. 
	Note that the ``curse of dimensionality" does occur in this bound. As $d$ gets larger, the rate becomes extremely slow.

	\subsection{DNN in classification}
	We consider DNN with rectified linear unit (ReLU) activation that $\sigma(z) = \max\{z,0\}$. For a ReLU neural network $f_n$ indexed by the sample size, let $L_n$ be the number of layers, $N_n$ be the maximum width of all the layers, $S_n$ be the total number of non-zero weights, $B_n$ be maximum absolute values of all the weights.
	Denote $\cF_n = \cF(L_n, N_n, S_n, B_n)$ as a DNN family with structural constraints specified by $L_n, N_n, S_n, B_n$. In this work, depth $L_n$ and width $N_n$ are the primary focuses.

	Convergence rate of DNN classifiers has been investigated in literature. 
	\cite{kim2021fast} derived fast convergence rates of ReLU DNN classifiers learned using the hinge loss ($\phi(z) = \max(0, 1-z)$). 
	Under the smooth boundary fragment assumption \eqref{eqn:boundary_fragment} and Tsybakov's noise condition (N), the empirical hinge loss minimizer
	\[
	\hat{f}_{\phi, n}=\argmin_{f\in \cF_n}\frac{1}{n}\sum_{i=1}^n\phi(y_if(\bx_i)),
	\]
	within some DNN family with carefully selected $L_n,N_n,S_n, B_n$ satisfies 
	\begin{align}
		\label{eq:rate1}
		\sup_{G^*\in \cG^*_{\beta}}
		\EE\left[\cE(\hat{f}_{\phi, n}, G^*)\right]\lesssim  
		\left(\frac{\log^3 n}{n}\right)^{\frac{\beta (\kappa+1)}{\beta (\kappa+2)+(d-1)(\kappa+1)}}.
	\end{align}
	Their result is rate \emph{suboptimal} comparing to the minimax lower bound (\ref{eqn:optimal_rate_classification}). 
	To the best of the authors' knowledge, this is the only existing convergence rate result for DNN classifiers under the smooth boundary setting. 
	The same suboptimal rate is referred to as ``best known rates" for SVM \citep{hamm2020adaptive}. 
	Other theoretical works on convergence rate of DNN classifiers are carried out under other assumptions, e.g., separable \citep{zhang2000convergence, hu2021understanding}, teacher student setting \citep{hu2020sharp}, smooth conditional probability \citep{audibert2007fast, steinwart2007fast, kohler2020rate}, etc. 
	Classification by estimating the conditional probability is usually referred to as "plug-in" classifiers and it's worth noting that it essentially reduces classification to regression. In comparison, estimating the decision boundary is a more fundamental setting for classification \citep{hastie2009elements} and it is the focus of this work.

	\section{Main results -- optimal rate of convergence} 
	\label{sec:rate}
	Under the classical smooth boundary fragment setting, DNN classifiers have not been proved optimal in recovering the decision boundary, which casts doubt on the empirical success of DNNs in classification. 
	In this section, we first investigate the potential reason behind the suboptimality.
	
	The noise exponent $\kappa$ in the Tsybakov's noise condition (N) plays a critical role in determining the convergence rate. In general, bigger $\kappa$ implies better separation, and hence easier boundary recovery. However, if we take a closer look at the excess risk by decomposing it into the stochastic (empirical) error and the approximation error (using DNN to approximate smooth functions), bigger $\kappa$ benefits the former but is harmful for the later (the approximation error is amplified by a power of $1+1/\kappa$ as in Lemma \ref{lemma:d_ineq}). In other words, the bottleneck for the stochastic error is the region with the smallest separation while that for the approximation error is the region with the largest separation. 
	If the separation along the decision boundary is inconsistent with sharp changes, the overall estimation performance of DNN classifiers could be harmed. The optimal rate of convergence can be achieved under extra separation consistency condition. 
	Complementary to the classical Tsybakov's noise condition (N), which indicates that the separation is not too small, the following condition ensures that the separation cannot be too large as well. 
	\begin{itemize}
		\item[(N$^+$)] There exist constants $c_1, T>0$ and $\kappa\in[0,\infty]$ such that for any $0\le t\le T$,
		$$\QQ\left(\{\bx\in G:|p(\bx)-q(\bx)|\le t\}\right)\ge c_1 t^{\kappa}$$
		holds for any positive-measure set $G\subset\cX$ containing the decision boundary, i.e., $\partial G^*\cap G^\circ$ is not empty.
	\end{itemize}
	
	If (N) and (N$^+$) both hold with the same exponent $\kappa$, the separation is guaranteed to be consistent along the decision boundary.  
	The following theorem utilizes this extra condition and shows that optimal rate of convergence can be achieved for DNN classifiers. 
	\begin{lemma}(Informal)
		\label{lemma:thm1}
		Under assumptions (N) and the smooth boundary fragment assumption (\ref{eqn:boundary_fragment}), if we further assume (N$^+$), then the empirical 0-1 loss minimizer within a ReLU DNN family with proper size achieves the optimal 0-1 loss excess risk convergence rate of  ${n^{-\frac{\beta(\kappa+1)}{\beta(\kappa+2) + (d-1)\kappa}}}$.
	\end{lemma}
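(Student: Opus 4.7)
The plan is to bound the 0-1 excess risk of the empirical minimizer $\hat{C}$ over a ReLU family $\cF_n=\cF(L_n,N_n,S_n,B_n)$ via the standard bias--variance decomposition
\begin{equation*}
	\cE(\hat C,C^*)\;\le\;\sup_{C\in\cF_n}\big\{(R(C)-R_n(C))+(R_n(\tilde C)-R(\tilde C))\big\}+\cE(\tilde C,C^*),
\end{equation*}
where $\tilde C$ is a fixed reference classifier induced by a DNN approximation $\tilde f\in\cF_n$ of the smooth boundary function $f^*$. I will calibrate $(L_n,N_n,S_n,B_n)$ so that both the approximation and the stochastic terms are of order $n^{-\beta(\kappa+1)/[\beta(\kappa+2)+(d-1)\kappa]}$, matching the minimax lower bound \eqref{eqn:optimal_rate_classification}.

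For the approximation term, I would invoke a Yarotsky-type construction to obtain $\tilde f\in\cF_n$ with $\|\tilde f-f^*\|_\infty\le\epsilon_n$ using $L_n\asymp\log n$ and $N_n\asymp\epsilon_n^{-(d-1)/\beta}$, so that the symmetric difference $G_{\tilde f}\triangle G^*$ is contained in the $\epsilon_n$-tube $T_n:=\{\bx:|f^*(\bx_{-d})-x_d|\le\epsilon_n\}$ of Lebesgue measure $\lesssim\epsilon_n$. Converting this into the weighted excess risk $\int_{G_{\tilde f}\triangle G^*}|p-q|\,d\bx$ is where (N$^+$) earns its keep: applied with $G=T_n$ it yields the crucial lower bound $\QQ(T_n\cap\{|p-q|\le t\})\ge c_1 t^\kappa$, which combined with $\QQ(T_n)\lesssim\epsilon_n$ in a layer-cake integral in $t$ (cut at the crossover $t\asymp\epsilon_n^{1/\kappa}$) produces $\cE(G_{\tilde f},G^*)\lesssim\epsilon_n^{(\kappa+1)/\kappa}$. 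Under (N) alone only the trivial bound $\cE(G_{\tilde f},G^*)\lesssim\epsilon_n$ is available, which is precisely the bottleneck diagnosed as the source of the suboptimal rate obtained by \citet{kim2021fast}.

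For the stochastic term, I would translate (N) into the variance--risk relation $\mathrm{Var}\big(\II\{C(\bx)\ne y\}-\II\{C^*(\bx)\ne y\}\big)\lesssim\cE(C,C^*)^{\kappa/(\kappa+1)}$ and combine it with a bracketing entropy bound $H_B(\delta,\cF_n,\|\cdot\|_1)\lesssim S_n\log(B_n N_n/\delta)$ for sparse ReLU networks, standard since \citet{schmidt2020nonparametric}. A peeling / local Rademacher argument then yields a uniform stochastic deviation of order $(S_n\log n/n)^{(\kappa+1)/(\kappa+2)}$ via the usual fixed-point inequality.

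Balancing the two bounds with $S_n\lesssim N_n\asymp\epsilon_n^{-(d-1)/\beta}$ gives $\epsilon_n\asymp n^{-\kappa\beta/[\beta(\kappa+2)+(d-1)\kappa]}$ and the claimed excess risk rate. The main obstacle is the approximation step under (N$^+$): establishing the sharpened tube bound $\int_{T_n}|p-q|\,d\bx\lesssim\epsilon_n^{(\kappa+1)/\kappa}$ genuinely requires two-sided Tsybakov control to rule out configurations in which $|p-q|$ jumps sharply across the decision boundary. A secondary technical point is verifying that (N$^+$) remains applicable to the shrinking tube $T_n$ -- this is exactly what the clause ``$\partial G^*\cap G^\circ$ is not empty'' in (N$^+$) is designed to accommodate. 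Everything downstream -- Yarotsky approximation, bracketing-entropy estimates, and local Rademacher peeling -- is by now standard machinery for this kind of DNN analysis.
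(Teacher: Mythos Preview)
Your overall architecture---approximation error plus stochastic error, balanced via the network size---matches the paper's, and you have correctly located the entire improvement over the suboptimal rate \eqref{eq:rate1} in sharpening the approximation bound from $\epsilon_n$ to $\epsilon_n^{(\kappa+1)/\kappa}$. Your stochastic half (variance--risk relation from (N), bracketing entropy $\asymp S_n\log(1/\delta)$, peeling) is equivalent to what the paper does through Lemma~\ref{lemma:geer} together with Lemma~\ref{lemma:d_ineq}.

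The gap is in the approximation step. Your layer-cake argument hinges on applying (N$^+$) with the \emph{same} constant $c_1$ to $G=T_n$ as $n\to\infty$. Read literally, (N$^+$) with universal $c_1,T$ forces $\QQ(G)\ge\QQ(\{\bx\in G:|p-q|\le T\})\ge c_1T^\kappa>0$ for every admissible $G$, which is incompatible with $\QQ(T_n)\lesssim\epsilon_n\to 0$; the clause ``$\partial G^*\cap G^\circ\ne\emptyset$'' only ensures $T_n$ contains boundary and does nothing for the constant. If instead one allows $c_1=c_1(G)$ to depend on $G$ (as the proof of Lemma~\ref{lemma:m1} in fact produces, with a $G$-dependent $C_G$), then the crossover level $t_*\asymp(\QQ(T_n)/c_1(T_n))^{1/\kappa}$ no longer scales like $\epsilon_n^{1/\kappa}$ and the layer-cake bound collapses. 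Moreover, even granting the lower bound up to $t_*$, you still need $\QQ(T_n\cap\{|p-q|>t\})=0$ for $t>t_*$ to close the integral, and (N$^+$) gives no such upper control on $|p-q|$. So the ``secondary technical point'' you flag is actually the crux: a set-level measure lower bound does not by itself deliver the tube estimate $\int_{T_n}|p-q|\lesssim\epsilon_n^{(\kappa+1)/\kappa}$.

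The paper sidesteps this by working pointwise. Lemma~\ref{lemma:thm1} is obtained as the special case $\kappa^-=\kappa^+=\kappa$ of Theorem~\ref{thm:local_estimator}, whose proof uses condition (M1)---the two-sided pointwise bound $m_{\bx}(t)\asymp|t|^{1/K(\bx)}$---to compute $\int_0^{\epsilon_n}m_{\bx}(t)\,dt\lesssim\epsilon_n^{1+1/\kappa}$ directly, no layer cake needed. The passage from (N)$+$(N$^+$) with common exponent $\kappa$ to ``$K(\bx)\equiv\kappa$'' is itself informal (hence the lemma's label), but once that bridge is accepted the approximation bound is a one-line integral in the normal direction rather than a measure-theoretic argument on a shrinking set.
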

	
	Lemma \ref{lemma:thm1} is a special case of Theorem \ref{thm:local_estimator} and the proof can be found in the Section \ref{sec:rate_local}. The improvement to optimality stems from a better approximation error bound, which follows from the separation consistency guaranteed by condition (N$^+$). 
	However, assuming identical separation over the whole support, i.e., the same $\kappa$ for both (N) and (N$^+$), is too strong. 
	It becomes much more realistic if we put this assumption on a small neighborhood, or a small segment of the decision boundary. If the densities $p, q$ are not too irregular, the separation is expected to be locally consistent.  
	
	To this end, we conduct convergence analysis in a local region of the decision boundary and consider a localized version of the classical Tsybakov's noise condition. For simplicity, $\cX$ is assumed to be $[0,1]^d$ in the remaining part of the paper. Extension to compact $\cX$ is straightforward.
	
	\subsection{Localized separation condition}
	
	Recall the boundary assumption that $G^*=\{\bx\in[0,1]^d: f^*(\bx_{-d})-x_d \ge 0\}$ where $f^*$ is some smooth function from $\RR^{d-1}$ to $\RR$. Denote the decision boundary to be $\partial G^* := \{\bx\in[0,1]^d: f^*(\bx_{-d})=x_d\}$. 
	Every point $\bx \in \partial G^*$ can be written as $\bx = (\bx_{-d}, f^*(\bx_{-d}))$.  
	Without loss of generality, assume $\QQ(\partial G^*) = 0$. 
	Notice that even though $f^*$ defines the decision boundary, it has nothing to do with the separation. The separation condition, e.g., (N), depends on properties of the data densities $p, q$ around the decision boundary. 
	To quantify the separation, define for each $\bx_{-d}\in[0,1]^{d-1}$,  
	$$m_{\bx_{-d}}(t):=|p((\bx_{-d}, f^*(\bx_{-d})+t)) - q((\bx_{-d}, f^*(\bx_{-d})+t))|,$$
	which captures the how $|p(\bx)-q(\bx)|$ changes along the direction of $x_d$ on each point of the decision boundary. 
	For ease of notation, we write $m_{\bx_{-d}}(t)$ and $m_{\bx}(t)$ when no confusion arises. 
	Notice that $m_{\bx}(0) = 0$ by definition and we want to characterize how $m_{\bx}(t)$ behaves when $t$ is close to 0, i.e., close to the decision boundary.
	Moreover, for every $\bx\in \partial G^*$, define the localized separation exponent
	\begin{equation*} 
		K(\bx) = \sup\{k\ge 0: \lim_{t\to 0} \frac{m_{\bx}(t)}{|t|^{1/k}} >0\}.
	\end{equation*} 
	$K(\bx)$ characterizes the separation condition locally at $\bx_{-d}$, i.e. how separated are $p$ and $q$ on each point of the decision boundary, along the direction of $x_d$. 
	For some $\bx\in \partial G^*$, if the directional derivative of $p-q$ tangent to the decision boundary is finite and non-zero, then $K(\bx) = 1$. 
	Note that if $K(\bx)$ is regular enough, e.g., continuous, Tsybakov's noise condition (N) with exponent $\kappa$ implies that $\kappa\le \inf_{\bx\in \partial G^*} K(\bx)$ and (N$^+$) implies $\kappa\ge \sup_{\bx\in \partial G^*} K(\bx)$. 
	Consider the following separation conditions which form a localized version of the Tsybakov's noise condition. 
	\begin{itemize}
		\item[(M1)] 
		There exists $\epsilon_0>0$ small enough and a constant $0<C_{\epsilon_0}<\infty$ such that for all $\bx\in \partial G^*$ and any $0<t<\epsilon_0$, 
		$$\frac{1}{C_{\epsilon_0}} \le \frac{m_{\bx}(t)}{|t|^{1/K(\bx)}} \le C_{\epsilon_0}.$$
	\end{itemize}
	\begin{itemize}
		\item[(M2)] 
		$K(\bx)$ is $\alpha$-H\"older continuous for some $0<\alpha\le1$, i.e.
		there exists constant $C_K$ such that for any $\bx_1,\bx_2\in \partial G^*$, $$|K(\bx_1)-K(\bx_2)|\le C_K\|\bx_1-\bx_2\|_2^\alpha.$$
	\end{itemize}
	
	\begin{figure}
		\centering
		\includegraphics[scale = 0.5]{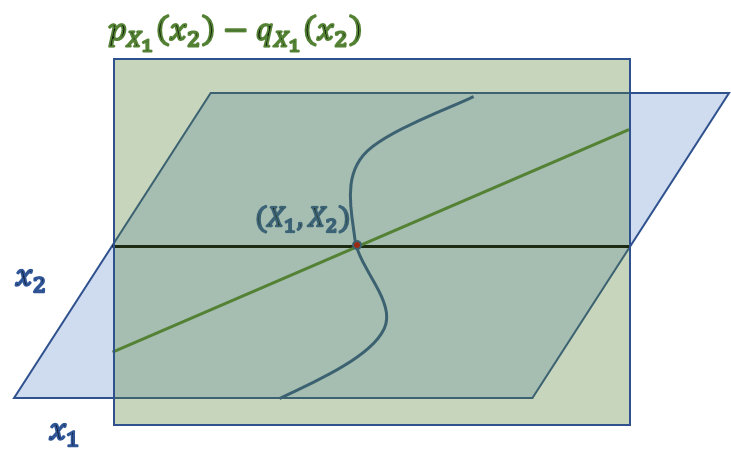}
		\caption{ 
			Illustration of the localized separation condition (M1) when $d=2$. The blue plane represents $\cX$ and the curved blue line is the optimal decision boundary. Fix some $(x_1,x_2)$ in the decision boundary. Along the $x_1$ direction (black line), the density difference $p-q$ is plotted in the green plane as the green line. (M1) defines the noise exponent $K(x_1)$ locally at $x_1$. If $p-q$ is linear as shown, $K(x_1) = 1$.}
		\label{fig:local}
	\end{figure}
	
	\begin{remark}[Justification of (M1,M2)]
		The proposed separation condition is a localized version of the classical Tsybakov's noise condition (N) where (M1) corresponds to (N) in a local region and (M2) ensures consistency among nearby regions.
		(M1,M2) is stronger than (N), only because it’s a finer characterization of (N) and we expect it to hold if the data distribution is not too irregular.
		\textbf{Many existing settings are special examples of (M1,M2)}, e.g., in the separable case, $K(\bx)\equiv \infty$; 
		if $p,q$ are piecewise linear, then (M) holds with $K(\bx)\equiv 1$; 
		if $p, q$ are H\"older smooth with smoothness more than 2, e.g., Gaussian mixtures, then (M1,M2) holds and $K(\bx)\le 1$. 
		Like (N), our condition is more of theoretical interest and hard to verify for real data. Still, the developed results can contribute to the understanding of DNN's empirical success in classification, e.g., how they can take advantage of better class separation and low-dimensional data structures.
	\end{remark}
	(M1,M2) is a slightly stronger characterization of the boundary separation. 
	The following lemma shows that (M1) implies (N) and (N$^+$). The proof can be found in Section \ref{sec:proof}.
	\begin{lemma}
		\label{lemma:m1}
		Denote $\kappa^- = \inf_{\bx\in \partial G^*} K(\bx)$ and $\kappa^+ = \sup_{\bx\in \partial G^*} K(\bx)$. Then condition (M1) implies that (N) holds with $\kappa = \kappa^-$ and (N$^+$) holds with $\kappa=\kappa^+$. 
	\end{lemma}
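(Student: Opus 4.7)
\textbf{Proof plan for Lemma~\ref{lemma:m1}.} The strategy is to parameterize points near the decision boundary as $\bx=(\bx_{-d}, f^*(\bx_{-d})+s)$ with $|s|$ small, use the identity $|p(\bx)-q(\bx)|=m_{\bx_{-d}}(s)$, and convert the two-sided bound in (M1) into sharp bounds on the admissible range of $s$ for each $\bx_{-d}$. Integrating over $\bx_{-d}\in[0,1]^{d-1}$ by Fubini then produces the required tube-measure estimates: the lower half of (M1) drives (N), while the upper half drives (N$^+$).

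\textit{Deriving (N) with $\kappa=\kappa^-$.} Choose $T\le 1/C_{\epsilon_0}$ so that $C_{\epsilon_0}t<1$ for all $t\le T$. The lower half of (M1) gives $m_{\bx_{-d}}(s)\ge |s|^{1/K(\bx_{-d})}/C_{\epsilon_0}$ for $|s|<\epsilon_0$, so $m_{\bx_{-d}}(s)\le t$ forces $|s|\le (C_{\epsilon_0}t)^{K(\bx_{-d})}$. Since $r\mapsto r^K$ is decreasing in $K$ once $r<1$, and $K(\bx_{-d})\ge\kappa^-$, this upper bound is at most $(C_{\epsilon_0}t)^{\kappa^-}$. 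Shrinking $T$ further ensures the full sublevel set $\{|p-q|\le t\}$ sits inside the tube $\{|s|<\epsilon_0\}$ (using (M1) at $s=\epsilon_0$ to bound $m_{\bx}(\epsilon_0)$ uniformly below, together with the standing assumption that $|p-q|$ is bounded away from zero outside any neighborhood of $\partial G^*$). Fubini then yields
\[
\QQ(\{\bx:|p(\bx)-q(\bx)|\le t\})\ \le\ \int_{[0,1]^{d-1}}2(C_{\epsilon_0}t)^{K(\bx_{-d})}\,d\bx_{-d}\ \le\ 2C_{\epsilon_0}^{\kappa^-}t^{\kappa^-},
\]
which is (N) with constant $c=2C_{\epsilon_0}^{\kappa^-}$.

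\textit{Deriving (N$^+$) with $\kappa=\kappa^+$.} For any $G$ with $\partial G^*\cap G^\circ\neq\emptyset$, fix a point $\bx_0=(\bx_{0,-d},f^*(\bx_{0,-d}))$ in the intersection and a radius $r>0$ with $B(\bx_0,r)\subset G$. By continuity of $f^*$, one can select a ball $U\subset[0,1]^{d-1}$ around $\bx_{0,-d}$ and $\delta\in(0,\epsilon_0)$ such that the slab $\{(\bx_{-d},f^*(\bx_{-d})+s):\bx_{-d}\in U,\ |s|\le\delta\}$ lies inside $B(\bx_0,r)\subset G$. The upper half of (M1) gives $m_{\bx_{-d}}(s)\le C_{\epsilon_0}|s|^{1/K(\bx_{-d})}$ for $|s|<\epsilon_0$, and since $r\mapsto r^{1/K}$ is increasing in $K$ when $r<1$, while $K(\bx_{-d})\le\kappa^+$, this is at most $C_{\epsilon_0}|s|^{1/\kappa^+}$. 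Hence every $s$ with $|s|\le(t/C_{\epsilon_0})^{\kappa^+}$ satisfies $m_{\bx_{-d}}(s)\le t$; taking $T$ so that $(T/C_{\epsilon_0})^{\kappa^+}\le\delta$ and integrating over $\bx_{-d}\in U$ yields
\[
\QQ(\{\bx\in G:|p(\bx)-q(\bx)|\le t\})\ \ge\ 2|U|\,C_{\epsilon_0}^{-\kappa^+}\,t^{\kappa^+},
\]
i.e.\ (N$^+$) with $c_1=2|U|C_{\epsilon_0}^{-\kappa^+}$.

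\textit{Main obstacle.} The only genuinely technical point is tracking the direction of the monotonicity of $r\mapsto r^K$ in $K$, which flips at $r=1$; this is why both halves need $T$ chosen small enough that the relevant quantities $C_{\epsilon_0}t$ and $|s|$ stay below $1$. A secondary nuisance is ensuring for (N) that the sublevel set genuinely lives in the near-boundary tube, which relies on a mild boundedness-below of $|p-q|$ outside neighborhoods of $\partial G^*$, implicit in the continuity/compactness setup. For (N$^+$), the resulting constant depends on the ball fit inside $G$ through $|U|$, so the statement is most naturally read with $G$-dependent constants; this does not affect its downstream use.
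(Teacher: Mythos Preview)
Your approach is essentially identical to the paper's: parametrize by $(\bx_{-d},s)$, apply the two halves of (M1) to invert $m_{\bx_{-d}}(s)\le t$ into a bound on $|s|$, use the monotonicity of $r\mapsto r^{K}$ in $K$ (for $r<1$) to replace $K(\bx_{-d})$ by $\kappa^-$ or $\kappa^+$, then Fubini. You are in fact more careful than the paper on two points it leaves implicit---the need for $C_{\epsilon_0}t<1$ so the monotonicity goes the right way, and the fact that the constant in (N$^+$) necessarily depends on $G$---so there is nothing to correct.
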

	Although (M1,M2) and (N) are closely related, the fundamentals of the classification problem, measured by the convergence lower bound, might be different. 
	Below we investigate the 0-1 loss excess risk lower bound under our proposed separation condition. 
	\begin{theorem}
		\label{thm:lower}
		Under the smooth boundary fragments setting (\ref{eqn:boundary_fragment}) with smoothness $\beta$. Assume condition (M1) and let $\kappa^- =\inf_{\bx\in \partial G^*} K(\bx)$. For any function space $\cF$, the 0-1 loss excess risk has the following lower bound,
		\begin{align*}
			\inf_{\hat{f}\in\cF}\sup_{G^*\in\cG^*_\beta}\EE[\cE(\hat{f},G^*)]\gtrsim  
			\left(\frac{1}{n}\right)^{\frac{\beta(\kappa^-+1)}{\beta(\kappa^-+2) + (d-1)\kappa^-}}.
		\end{align*}
	\end{theorem}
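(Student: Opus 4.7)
The plan is to adapt the classical Mammen--Tsybakov minimax lower bound construction, taking care that each adversarial hypothesis also lies in the smaller class cut out by (M1) with $K\equiv\kappa^-$. This step is essential because (M1) is strictly stronger than (N): the lower bound in (\ref{eqn:optimal_rate_classification}) is over the larger class satisfying only (N), so it does not transfer verbatim to the class assumed here. Once a hard family is exhibited inside the (M1) class, an Assouad-type argument delivers the claimed rate.

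For the boundary construction, I would pick a resolution $h>0$ to be chosen, tessellate $[0,1]^{d-1}$ into $m\asymp h^{-(d-1)}$ cells of side $h$, and fix a smooth mother bump $\phi\in\cH(d-1,\beta)$ with $\mathrm{supp}(\phi)\subset(-1/2,1/2)^{d-1}$. For each cell $j$ with center $c_j$, let $\phi_j(\bx_{-d})=h^{\beta}\phi((\bx_{-d}-c_j)/h)$; by standard scaling $\|\phi_j\|_{\cH(d-1,\beta)}\lesssim 1$, and the $\phi_j$ have disjoint supports. For $\sigma\in\{0,1\}^m$, set $f^*_\sigma(\bx_{-d})=\tfrac12+\sum_j\sigma_j\phi_j(\bx_{-d})$, so $G^*_{f^*_\sigma}\in\cG^*_\beta$ uniformly in $\sigma$. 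To attach densities, pick a smoothly truncated profile $g:\RR\to\RR$ with $g(t)=\mathrm{sign}(t)|t|^{1/\kappa^-}$ on a neighborhood of $0$ and $|g|$ bounded globally, and set $p_\sigma+q_\sigma\equiv 2$, $p_\sigma-q_\sigma=g(x_d-f^*_\sigma(\bx_{-d}))$. Then $p_\sigma,q_\sigma\in[0,2]$ are valid densities on $[0,1]^d$, and $m_{\bx}(t)=|g(t)|\asymp|t|^{1/\kappa^-}$ uniformly along $\partial G^*_{f^*_\sigma}$, so (M1) holds with $K\equiv\kappa^-$; Lemma \ref{lemma:m1} then gives (N) with exponent $\kappa^-$ automatically.

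For the information-theoretic step, flipping $\sigma_j$ changes the excess risk by the integral of $|p_\sigma-q_\sigma|$ over the symmetric-difference slab of thickness $\phi_j$, which evaluates to $\int_{\mathrm{cell}_j}\int_0^{\phi_j(\bx_{-d})}|t|^{1/\kappa^-}dt\,d\bx_{-d}\asymp h^{(d-1)+\beta(\kappa^-+1)/\kappa^-}$. The per-sample KL divergence between adjacent hypotheses is controlled by $\int(g(x_d-f^*_\sigma)-g(x_d-f^*_{\sigma'}))^2\,d\bx$, whose scaling in $h$ is determined directly from the profile $g$. Choosing the critical $h\asymp n^{-\kappa^-/[(d-1)\kappa^-+\beta(\kappa^-+2)]}$ to balance the KL against $1$, and invoking Assouad's lemma, yields $\sup_\sigma\EE[\cE(\hat f,G^*_{f^*_\sigma})]\gtrsim m\cdot h^{(d-1)+\beta(\kappa^-+1)/\kappa^-}\asymp h^{\beta(\kappa^-+1)/\kappa^-}=n^{-\beta(\kappa^-+1)/[\beta(\kappa^-+2)+(d-1)\kappa^-]}$.

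The main obstacle is the density construction: verifying that $g$ can be chosen so that (M1) holds \emph{uniformly in $\sigma$} with constants independent of $h$ (hence of $n$), while keeping $p_\sigma,q_\sigma\in[0,2]$ everywhere. A smoothly truncated power profile handles $\kappa^-\in(0,\infty)$; the separable limit $\kappa^-=\infty$ uses a $\pm 1$-valued $g$ supported away from the boundary, and $\kappa^-=0$ reduces to the trivial parametric $n^{-1/2}$ bound. The remaining bookkeeping, namely controlling $\|f^*_\sigma\|_{\cH(d-1,\beta)}$ uniformly, making the KL scaling precise in each regime of $\kappa^-$, and feeding the quantities into Assouad, follows the standard Mammen--Tsybakov template.
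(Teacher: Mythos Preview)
Your Assouad-plus-bump strategy is the same template as the paper's (which follows Mammen--Tsybakov), and your insistence that the hard hypotheses actually satisfy (M1) is a valid point the paper glosses over---its own construction only verifies (N). However, your translated-profile choice $p_\sigma-q_\sigma=g(x_d-f^*_\sigma(\bx_{-d}))$ has a real gap in the information step when $\kappa^-<2$. Flipping $\sigma_j$ shifts the entire $x_d$-profile by $\delta\lesssim h^\beta$, so the per-flip $\chi^2$/KL is governed by $\int_{\mathrm{cell}_j}\|g(\cdot)-g(\cdot-\delta)\|_{L^2(dx_d)}^2\,d\bx_{-d}$. For $\kappa^-<2$ the exponent $1/\kappa^->1/2$ makes $g'$ square-integrable on the fixed-width power-law window, whence $\|g(\cdot)-g(\cdot-\delta)\|_{L^2}^2\asymp\delta^2\|g'\|_{L^2}^2$ rather than $\delta^{1+2/\kappa^-}$. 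The KL then scales as $h^{d-1+2\beta}$, the Assouad balance forces $h\asymp n^{-1/(d-1+2\beta)}$, and the resulting lower bound is only $n^{-\beta(\kappa^-+1)/[\kappa^-(d-1+2\beta)]}$, strictly weaker than the target for every $\kappa^-<2$ (compare denominators: $2\beta\kappa^-$ versus $\beta(\kappa^-+2)$).

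The paper sidesteps this by decoupling the density from the moving boundary: its power-law piece is $[(c-x_d)/c_2]^{1/\kappa}$ with a \emph{fixed} reference level $c$, and the hypothesis index $\omega$ enters only through the indicator of a strip of width $\lesssim h^\beta$. A bit-flip therefore alters the density on a set of $x_d$-measure $O(h^\beta)$ where the integrand is $O((h^\beta)^{2/\kappa})$, yielding the correct Hellinger scaling $h^{(d-1)+\beta(1+2/\kappa)}$ for all $\kappa$. If you want the hard family to satisfy (M1) with $n$-independent constants---which the paper's family does not, since its $f_\omega-g_0$ does not even vanish at the boundary---you would need a hybrid: keep $p-q$ equal to a $\sigma$-independent profile outside a band of width $O(h^\beta)$ around the perturbed boundary and modify only inside. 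But then establishing the two-sided bound $m_{\bx}(t)\asymp|t|^{1/\kappa^-}$ for all $|t|<\epsilon_0$ with a fixed $\epsilon_0\gg h^\beta$ becomes the crux, and your proposal does not address it.
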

	The convergence lower bound under (M1) only depends on $\kappa^-$ and has the same form compared to (\ref{eqn:optimal_rate_classification}) under (N). This is expected since (M1,M2) is a localized version of (N).

	\subsection{Localized convergence analysis}
	Defining function $K(\bx)$ enables us to consider local convergence behaviours. As can be seen from Lemma \ref{lemma:thm1} and Lemma \ref{lemma:m1}, if the separation described by $K(\bx)$ is consistent, i.e., $\kappa^+\approx\kappa^-$, the convergence rate can be improved to be optimal. Such consistency is expected within any small-enough local region due to condition (M2). 
	To further investigate the interplay between $\kappa^+$ and $\kappa^-$, we conduct localized convergence analysis in this section.

	There are many ways to construct local regions. Since we are considering the smooth boundary fragment setting (\ref{eqn:boundary_fragment}), where $x_d$ is a special dimension, a natural choice for the local regions is equal-sized grids in the $\bx_{-d}$ space. 
	Choose integer $M>0$ and divide $[0,1]^d$ into $M^{d-1}$ regions 
	\[
	[0,1]^d = \bigcup_{j_1,\ldots,j_{d-1}=1}^M D_{(j_1,\ldots,j_{d-1})},
	\] 
	where $D_{(j_1,\ldots,j_{d-1})} := \{\bx\in[0,1]^d: x_1\in[\frac{j_1-1}{M},\frac{j_1}{M}),\cdots,x_{d-1}\in[\frac{j_{d-1}-1}{M},\frac{j_{d-1}}{M}]\}$. Denote the grid points to be $\bar{\bx}_{j_1,\ldots,j_{d-1}}$. 
	For ease of notation, let $\bj_{-d}=(j_1,\cdots,j_{d-1})$ and denote $J_M$ as all $M^{d-1}$ combinations of ${\bj_{-d}}$'s described above. 
	Correspondingly, divide the data set as $\cD = \cup_{\bj_{-d}\in J_M}\cD_{\bj_{-d}}$ where $\cD_{\bj_{-d}}=\{(\bx,y)\in\cD: \bx\in D_{\bj_{-d}}\}$.
	Similarly, the 0-1 loss can be decomposed into 
	\begin{align*}
		d_{p,q}(\hat{G}_n,G^*) =& \int_{\hat{G}_n\triangle G^*} |p(\bx)-q(\bx)|d\bx = \sum_{\bj_{-d}\in J_M} \int_{(\hat{G}_n\triangle G^*)\cap D_{\bj_{-d}}} |p(\bx)-q(\bx)|d\bx\\
		:=& \sum_{\bj_{-d}\in J_M} d_{\bj_{-d}}(\hat{G}_n,G^*).
	\end{align*}
	The empirical 0-1 loss, $R_n(f) = \frac{1}{n}\sum_{i=1}^n\II{\{f(\bx_i) y_i<0\}}$, can also be decomposed into $M^{d-1}$ parts, i.e., $R_n(G) = \sum_{\bj_{-d}\in J_M}R_{n,\bj_{-d}}$ where 
	$$ R_{n,\bj_{-d}}=\frac{1}{|\cD_{\bj_{-d}}|}\sum_{i=1}^n\II{\{\bx_i\in D_{\bj_{-d}}: f(\bx_i) y_i<0\}}.$$

	Now we focus on each local region $D_{\bj_{-d}}$.
	Similar to that in the whole region $[0,1]^d$, we have the following theorem on the local 0-1 loss excess risk convergence rate. 
	\begin{theorem}
		\label{thm:local_estimator}
		Under assumption (M1), further assume that for some $\bj_{-d} \in J_M$, $\kappa^-\le K(\bx)\le\kappa^+$ for all $\bx\in D_{\bj_{-d}}$.
		Let $\tilde{\cF}_n$ be a ReLU DNN family\footnote{$\tilde{\cF}_n$ is used to denote DNN family for local estimation. ${\cF}_n$ is reserved for the global estimator DNN family.} with size in the order of 
		\[
		\tilde{N}_n \tilde{L}_n \asymp {n^{\ \frac{\kappa^+(\kappa^-+1)(d-1)/2}{(\kappa^-+2)(\kappa^+ +1)\beta + (d-1)\kappa^+(\kappa^-+1)}}}\cdot \log^2 (n).
		\]
		Let the empirical 0-1 loss minimizer be
		\begin{equation}\label{regular:class}
			\hat{f}_{n, {\bj_{-d}}}:=\argmin_{f\in\tilde{\cF}_n}R_{n,{\bj_{-d}}}(f).
		\end{equation}
		Then the 0-1 loss excess risk satisfies
		\begin{align*}
			\sup_{G^*\in \cG^*_\beta }\EE(R_{{\bj_{-d}}}(\hat{f}_{n, {\bj_{-d}}})-R_{{\bj_{-d}}}({G^*})) = \tilde{O} \rbr{n^{-\frac{(\kappa^-+1)\beta}{(\kappa^-+2)\beta + \mathbf{\rbr{\frac{\kappa^-+1}{\kappa^+ +1}}}(d-1)\kappa^+}}},
		\end{align*}
		where $\tilde{O}(\cdot)$ hides the $\log(n)$ terms. 
	\end{theorem}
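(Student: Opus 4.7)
I would follow the standard approximation/stochastic decomposition of the excess risk, but with the twist that inside the local cell $D_{\bj_{-d}}$ the upper and lower sides of (M1) enter \emph{asymmetrically}: the approximation error only feels the upper side ($\kappa^+$), while the stochastic error only feels the lower side ($\kappa^-$). For any $\tilde f\in\tilde\cF_n$, using the ERM inequality $R_{n,\bj_{-d}}(\hat f_{n,\bj_{-d}})\le R_{n,\bj_{-d}}(\tilde f)$, one splits
\[
R_{\bj_{-d}}(\hat f_{n,\bj_{-d}})-R_{\bj_{-d}}(G^*)\le\underbrace{R_{\bj_{-d}}(\tilde f)-R_{\bj_{-d}}(G^*)}_{T_{\mathrm{app}}}+\underbrace{(R_{\bj_{-d}}-R_{n,\bj_{-d}})(\hat f_{n,\bj_{-d}})-(R_{\bj_{-d}}-R_{n,\bj_{-d}})(\tilde f)}_{T_{\mathrm{sto}}}.
\]

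\textbf{Approximation term.} Using a standard ReLU approximation theorem for the H\"older ball $\cH(d-1,\beta)$ (for example the Yarotsky or Lu--Shen--Yang--Zhang construction), I would choose $\tilde f\in\tilde\cF_n$ approximating $f^*(\bx_{-d})-x_d$ uniformly with error $\epsilon_n\asymp(\tilde N_n\tilde L_n)^{-2\beta/(d-1)}$ modulo logarithmic factors, so the induced set $\tilde G$ obeys $\tilde G\triangle G^*\subset\{|x_d-f^*(\bx_{-d})|\le\epsilon_n\}$. On $D_{\bj_{-d}}$ the upper half of (M1) with $K(\bx)\le\kappa^+$ gives $m_{\bx}(t)\lesssim |t|^{1/\kappa^+}$; integrating along the $\epsilon_n$-tube, which is a one-sided localized sharpening of Lemma \ref{lemma:d_ineq}, yields
\[
T_{\mathrm{app}}\lesssim \int_{D_{\bj_{-d}}\cap\partial G^*}\int_0^{\epsilon_n} t^{1/\kappa^+}\,dt\,d\bx_{-d}\lesssim \epsilon_n^{\,1+1/\kappa^+}\asymp (\tilde N_n\tilde L_n)^{-\frac{2\beta(\kappa^++1)}{(d-1)\kappa^+}}.
\]

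\textbf{Stochastic term.} For $T_{\mathrm{sto}}$ I would bound the centered empirical process over the set class $\{\{\bx\in D_{\bj_{-d}}:f(\bx)<0\}:f\in\tilde\cF_n\}$ via bracketing entropy, which inherits order $(\tilde N_n\tilde L_n)^2\log n$ from the pseudo-dimension of $\tilde\cF_n$. Lemma \ref{lemma:m1} ensures (N) on $D_{\bj_{-d}}$ with exponent $\kappa^-$, so the Bernstein-type variance bound $\mathrm{Var}\lesssim \cE_{\bj_{-d}}^{\kappa^-/(\kappa^-+1)}$ is available, and a Talagrand concentration plus peeling argument in the Mammen--Tsybakov style delivers, after the usual fixed-point step,
\[
T_{\mathrm{sto}}\lesssim \Big(\tfrac{(\tilde N_n\tilde L_n)^2\log^{c} n}{n}\Big)^{(\kappa^-+1)/(\kappa^-+2)}.
\]

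\textbf{Balancing and main obstacle.} Writing $V:=\tilde N_n\tilde L_n$, equating the two displays $V^{-2\beta(\kappa^++1)/((d-1)\kappa^+)}\asymp (V^2/n)^{(\kappa^-+1)/(\kappa^-+2)}$ and solving gives $V\asymp n^{\kappa^+(\kappa^-+1)(d-1)/\{2[(\kappa^-+2)(\kappa^++1)\beta+(d-1)\kappa^+(\kappa^-+1)]\}}$ together with the stated excess-risk rate after elementary algebra. The main obstacle is the asymmetric decoupling: Lemma \ref{lemma:d_ineq} uses a single $\kappa$, whereas here I need a one-sided, localized refinement that exploits only $m_{\bx}(t)\lesssim |t|^{1/\kappa^+}$ in the approximation step while freeing the stochastic step to use (N) with the smaller exponent $\kappa^-$. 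Verifying that the two sides of (M1) can indeed be separated within a single cell $D_{\bj_{-d}}$, and that the H\"older continuity of $K(\bx)$ from (M2) keeps $\kappa^{\pm}$ essentially constant across the cell so the local exponents $\kappa^-,\kappa^+$ really are the right ones, is where the bulk of the technical work lies; the rest is bookkeeping.
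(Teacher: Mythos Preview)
Your proposal is correct and follows essentially the same route as the paper: approximate $f^*$ by $\tilde f\in\tilde\cF_n$ with $L_\infty$-error $\epsilon_n$, use the upper half of (M1) to get $T_{\mathrm{app}}\lesssim\epsilon_n^{1+1/\kappa^+}$, control the centered empirical process via the DNN entropy $A_n\asymp(\tilde N_n\tilde L_n)^2$ together with the localization $d_\triangle\lesssim d_{p,q}^{\kappa^-/(\kappa^-+1)}$ (your ``Bernstein variance bound'' is exactly this inequality), and balance. Two small corrections to your closing paragraph: Lemma~\ref{lemma:d_ineq} is \emph{already} the one-sided statement you want (its hypothesis is only $K(\bx)\ge\kappa^-$, and its proof uses only the lower half of (M1)), so no refinement is needed; and (M2) plays no role in Theorem~\ref{thm:local_estimator} since the bounds $\kappa^-\le K(\bx)\le\kappa^+$ on $D_{\bj_{-d}}$ are assumed directly---(M2) enters only in the global Theorem~\ref{thm:global_estimator} to make $\kappa^+-\kappa^-$ small when $M=\log n$.
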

	Note that (\ref{regular:class}) with $M=1$ corresponds to regular DNN classifiers considered by \cite{kim2021fast} and \cite{hamm2020adaptive}. Our result is sharper in the sense that $\kappa^+$ appears in both the convergence rate and the classifier size. 
	The requirement for the network size comes from the DNN approximation literature can can be quite flexible. In Theorem \ref{thm:local_estimator}, no constraint is put on $B_n$ and $S_n$ since we are using the results from \cite{lu2020deep}.
	
	\begin{remark}[Origin of sub-optimality]
		\label{rmk1}
		Our local convergence rate in Theorem \ref{thm:local_estimator} closely resembles those established under the original Tsybakov's noise condition (N). 
		On one hand, the convergence rate bottleneck is indeed the minimum value of $K(\bx)$ in that region and $\kappa^-$ plays the same role as $\kappa$ in (N).
		On the other hand, the extra term in the denominator $(\kappa^-+1)/(\kappa^+ +1)$ reveals the potential \textit{source of the suboptimality} of existing results. If no assumption is made on $\kappa^+$, one allows $\kappa^+\to\infty$ so that $\kappa^+/(\kappa^++1)\to 1$, the convergence rate reduces to the suboptimal one (\ref{eq:rate1}) obtained in \cite{kim2021fast} and \cite{hamm2020adaptive}. However, if $\kappa^+=\kappa^-$, i.e., $K(\bx)$ is constant, optimal rate is attainable.  
		This is consistent with our numerical findings in which the regular DNN classifier performs well when $K(\bx)$ is constant; see Figure \ref{fig:acc}. 
		Condition (M2) says that $K(\bx)$ is locally smooth, i.e., $\kappa^+\approx\kappa^-$ as long as the local region is small.
	\end{remark} 
	
	In order to achieve the fastest convergence rate, Theorem \ref{thm:local_estimator} requires $\tilde{\cF}_n$ to have proper size. If $\kappa^-=\kappa^+=\kappa$, the size constraint is 
	\[
	\tilde{N}_n \tilde{L}_n=\tilde{O}\rbr{n^{\ \frac{\kappa(d-1)/2}{(\kappa+2)\beta + (d-1)\kappa}}}.
	\]
	The bigger the $\kappa$, the larger the required size, and the faster the convergence rate. 
	In case the size constraint is not met, the the following corollary gives the corresponding convergence results. 
	
	\begin{corollary}\label{cor}
		Under the same setting as Theorem \ref{thm:local_estimator}, denote $r_0=\frac{\kappa^+(\kappa^-+1)(d-1)}{(\kappa^-+2)(\kappa^+ +1)\beta + (d-1)\kappa^+(\kappa^-+1)}$ and let $\tilde{\cF}_n$'s size satisfy
		$\tilde{N}_n \tilde{L}_n=\Omega(n^{r/2})$ for some $0<r<1$. 
		If $r<r_0$, the approximation error dominates and the 0-1 loss excess risk convergence rate becomes 
		$\tilde{O}\rbr{n^{-\frac{r\beta(\kappa^++1)}{(d-1)\kappa^+}}}$. If $r>r_0$, the stochastic error dominates and the convergence rate is
		$\tilde{O}\rbr{n^{- \frac{(1-r)(\kappa^-+1)}{\kappa^-+2}}}$. 
	\end{corollary}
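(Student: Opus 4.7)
The plan is to reuse the error decomposition that underlies the proof of Theorem \ref{thm:local_estimator} and simply trace through what changes when the network size $\tilde{N}_n\tilde{L}_n$ is \emph{fixed} at $n^{r/2}$ instead of chosen to balance the two error sources. Writing $\hat f_{n,\bj_{-d}}$ for the local empirical minimizer and $\tilde f^{\,*}$ for a best DNN approximator of $f^*$ in $\tilde\cF_n$, the local excess risk splits as
\begin{align*}
R_{\bj_{-d}}(\hat f_{n,\bj_{-d}}) - R_{\bj_{-d}}(G^*)
\;\le\;
\underbrace{R_{\bj_{-d}}(\tilde f^{\,*}) - R_{\bj_{-d}}(G^*)}_{\text{approximation error}}
\;+\;
\underbrace{R_{\bj_{-d}}(\hat f_{n,\bj_{-d}}) - R_{\bj_{-d}}(\tilde f^{\,*})}_{\text{stochastic error}},
\end{align*}
and the two pieces respond oppositely to the network size.

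For the approximation piece, I would invoke the ReLU approximation result from \cite{lu2020deep}, which delivers an $L_\infty$ error of order $(\tilde N_n\tilde L_n)^{-2\beta/(d-1)} \asymp n^{-r\beta/(d-1)}$ for any $f^*\in\cH(d-1,\beta)$. The key local ingredient is then Lemma \ref{lemma:d_ineq}: under (M1) with upper exponent $\kappa^+$ on $D_{\bj_{-d}}$, the $L_\infty$ error is inflated by the power $(\kappa^++1)/\kappa^+$ when converted to 0-1 loss excess risk, yielding an approximation contribution of order $\tilde O\rbr{n^{-r\beta(\kappa^++1)/((d-1)\kappa^+)}}$. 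For the stochastic piece, Lemma \ref{lemma:m1} hands us (N) with exponent $\kappa^-$ on $D_{\bj_{-d}}$, so the standard Tsybakov-type localization gives a rate of order $\tilde O\rbr{(V/n)^{(\kappa^-+1)/(\kappa^-+2)}}$ with $V$ the VC/pseudo-dimension of $\tilde\cF_n$; since ReLU networks with $\tilde N_n\tilde L_n\asymp n^{r/2}$ have $V\le \tilde O(n^r)$, this contributes $\tilde O\rbr{n^{-(1-r)(\kappa^-+1)/(\kappa^-+2)}}$.

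The corollary now follows by comparing the two exponents. A one-line algebraic check shows that
\begin{align*}
\frac{r\beta(\kappa^++1)}{(d-1)\kappa^+} \;=\; \frac{(1-r)(\kappa^-+1)}{\kappa^-+2}
\quad\Longleftrightarrow\quad
r \;=\; \frac{\kappa^+(\kappa^-+1)(d-1)}{(\kappa^-+2)(\kappa^++1)\beta + (d-1)\kappa^+(\kappa^-+1)} \;=\; r_0,
\end{align*}
which is exactly the balancing point implicit in Theorem \ref{thm:local_estimator}. Hence for $r<r_0$ the approximation term decays more slowly and dominates, giving the first claimed rate, while for $r>r_0$ the stochastic term dominates, giving the second.

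I expect the main obstacle to be bookkeeping rather than any genuinely new argument: one must verify that the two-sided control in (M1) is used in a split fashion — only $\kappa^+$ enters the Lemma \ref{lemma:d_ineq} amplification in the approximation bound, while only $\kappa^-$ enters the Bernstein-type variance bound driving the stochastic localization — so that the exponents $(\kappa^++1)/\kappa^+$ and $(\kappa^-+1)/(\kappa^-+2)$ appear in the correct places. Once this separation is in place, the estimates are exactly those from the proof of Theorem \ref{thm:local_estimator} evaluated at the off-optimal size $\tilde N_n\tilde L_n\asymp n^{r/2}$, and no further rebalancing is required because the statement itself reports the dominant term in each regime.
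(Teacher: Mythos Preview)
Your plan is correct and matches the paper's own (implicit) argument: the corollary is just the proof of Theorem \ref{thm:local_estimator} re-run with $\tilde N_n\tilde L_n\asymp n^{r/2}$ held fixed rather than optimized, and your algebraic identification of $r_0$ as the crossover point is exactly right. One citation slip to fix: Lemma \ref{lemma:d_ineq} is the \emph{lower} bound $d_{p,q}\gtrsim d_\triangle^{(\kappa^-+1)/\kappa^-}$ and is what the paper uses on the \emph{stochastic} side (to convert $d_\triangle(\hat f,f^*)^{1/2}$ into $d_{p,q}^{\kappa^-/(2(\kappa^-+1))}$ and solve for the $(A_n/n)^{(\kappa^-+1)/(\kappa^-+2)}$ rate); the $(\kappa^++1)/\kappa^+$ amplification on the approximation side comes instead from the direct integral $\int_0^\epsilon m_{\bx}(t)\,dt\le C_{\epsilon_0}\int_0^\epsilon t^{1/\kappa^+}\,dt$ using the \emph{upper} half of (M1), not from Lemma \ref{lemma:d_ineq}. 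Also, the paper controls complexity via the bracketing entropy bound $A_n=O(\tilde N_n^2\tilde L_n^2)$ of Lemma \ref{lemma:entropy} together with Lemma \ref{lemma:geer}, rather than a VC/pseudo-dimension argument, though both routes give the same $A_n\asymp n^r$ up to logs.
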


	\begin{figure}[t]
		\centering
		\includegraphics[width=0.45\textwidth]{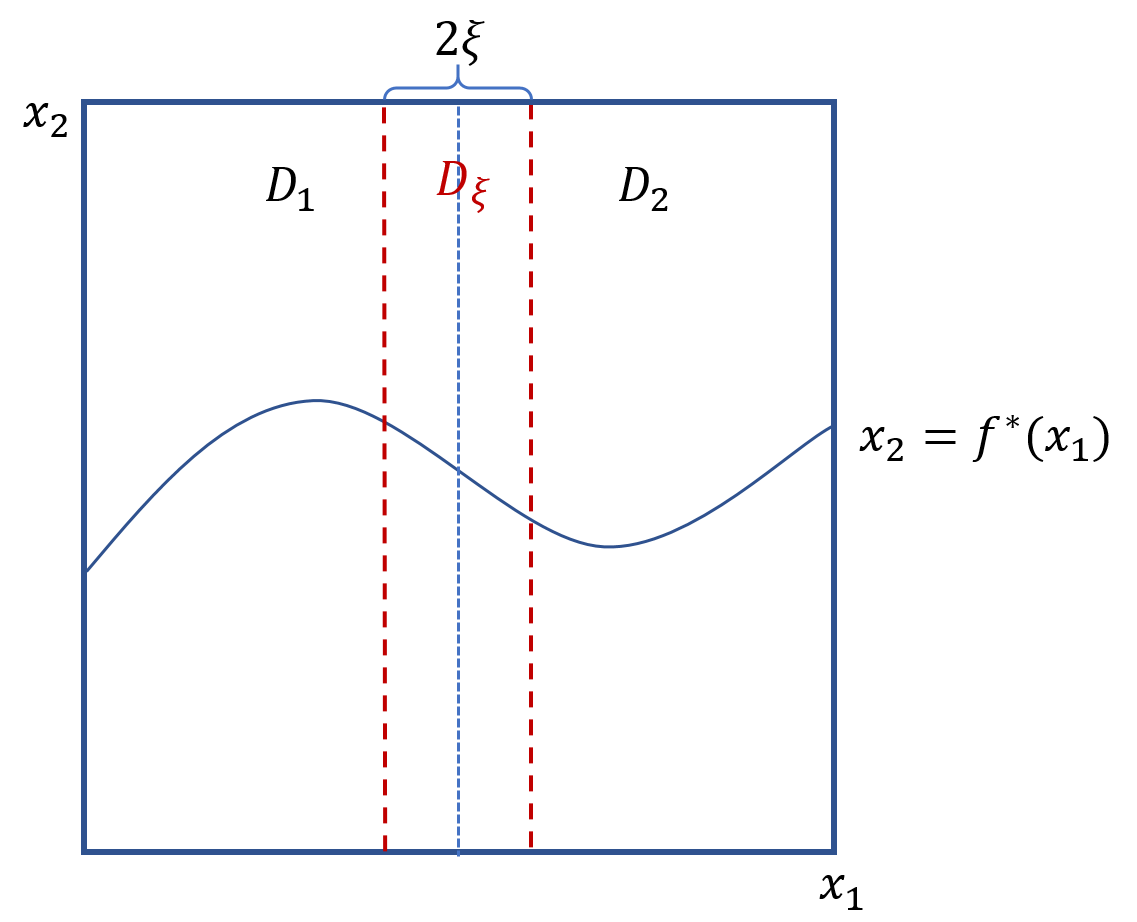}
		\caption{Illustration of region $D_\xi$ in $d=2, M=1$ case.}
		\label{fig:E}
	\end{figure}

	\subsection{Construction of the global estimator}
	\label{sec:construction}
	In this section, we proceed from a localized analysis to the global one and evaluate the overall excess risk convergence rate.
	The goal is to construct a global classifier that takes advantage of the developed results for each region $D_{\bj_{-d}}$, such that Theorem \ref{thm:local_estimator} can be applied locally. 
	To be more specific, denote the properly sized network family (according to Theorem \ref{thm:local_estimator}) for each region as $\tilde{\cF}_{n,\bj_{-d}}$, with width $\tilde{N}_{n,\bj_{-d}}$ and depth $\tilde{L}_{n,\bj_{-d}}$. 
	Let $\cF_n$ be the DNN family for global estimation, with depth $L_n$ and width $N_n$. Denote the global empirical minimizer within $\cF_n$ to be 
	\begin{equation}
		\label{eqn:f_hat}
		\hat{f}_n:=\argmin_{f\in\cF_n} R_n(f).
	\end{equation}
	Then, it's ideal if for any $\bj_{-d}\in J_M$, $\hat{f}_n$ satisfies
	\begin{equation}
		\label{eqn:global}
		R_{n,\bj_{-d}}({\hat{f}_n}) 
		= \min_{f\in\tilde{\cF}_{n,\bj_{-d}}}R_{n,\bj_{-d}}({f}). 
	\end{equation}
	Note that $\hat{f}_n$ degenerates to the regular DNN classifier when $M=1$. 
	For general $M>1$, we aim to construct ${\cF}_n$ in a divide-and-conquer fashion such that \eqref{eqn:global} is satisfied with high probability. 
	Below we present the construction of $\cF_n$ and the probabilistic argument.

	Recall the definition of $D_{\bj_{-d}}$, where we divide $[0,1]^d$ along $\bx_{-d}$ into $M^{d-1}$ equally sized regions. 
	For some $0<\xi \ll 1/M$, define a small region around all grid points $\bar{\bx}_{\bj_{-d}}$'s as 
	\begin{align*}
		D_\xi = \{\bx\in [0,1]^d: \|\bx_{-d} - \bar{\bx}_{\bj_{-d}}\|_\infty\le \xi, \ \bj_{-d}\in J_M\}.
	\end{align*}
	$D_\xi$ with $d=2, M=2$ is illustrated in Figure \ref{fig:E}. We aim to show that \eqref{eqn:global} holds outside $D_\xi$ for some carefully chosen $\xi$ and $\cF_n$. 
	To this end, define event 
	$$E_\xi:=\{\bx_i\notin D_\xi:\forall i = 1,2,\ldots,n\}.$$
	Since $p(\bx)$ and $q(\bx)$ are both bounded densities (by $c_0$), we have
	\begin{align*}
		\PP(x\in D_\xi) &\le c_0 \QQ(D_\xi)\le 2c_0 M\xi(d-1).
	\end{align*}
	Therefore, if we choose $M$ such that $n M\xi(d-1)\to 0$ as $n\to\infty$, then
	\begin{align*}
		\PP(E_\xi) &\ge (1- 2c_0 M\xi(d-1))^n \to 1.
	\end{align*}
	In the remaining of the analysis, we assume that $E_\xi$ happens. 
	For any $f_{n,\bj_{-d}}\in\tilde{\cF}_{n,\bj_{-d}}$, we make modifications and further construct ${f}^+_{n,\bj_{-d}}$ that satisfies the following properties:
	\begin{enumerate}
		\item[(P1)]\label{p1}
		On $D_{\bj_{-d}}\backslash D_\xi$, ${f}^+_{n,\bj_{-d}}={f}_{n,\bj_{-d}}$;
		
		\item[(P2)]\label{p2}
		Outside $D_{\bj_{-d}}$, ${f}^+_{n,\bj_{-d}}=0$;
		
		\item[(P3)] \label{p3}
		${f}^+_{n,\bj_{-d}}\in\tilde{\cF}^+_{n,\bj_{-d}}$ where $\tilde{\cF}^+_{n,\bj_{-d}}$ is slightly larger than $\tilde{\cF}_{n,\bj_{-d}}$ with depth $\tilde{L}^+_{n,\bj_{-d}} = \tilde{L}_{n,\bj_{-d}} + O(1)$ and width $\tilde{N}^+_{n,\bj_{-d}} = 2\tilde{N}_{n,\bj_{-d}}$. 
	\end{enumerate}
	The construction details and verification of (P1) to (P3) are deferred to Section \ref{sec:p13}.
	Let's proceed with the properties of ${f}^+_{n,\bj_{-d}}$ and $\tilde{\cF}^+_{n,\bj_{-d}}$.
	By (P2), ${f}_{n,\bj_{-d}}^{+}$ is zero outside $D_{\bj_{-d}}$ and we can combine them together to define
	\begin{align}\label{eqn:sum}
		{f}_{n, \Sigma}(\bx_{-d}) = \sum_{\bj_{-d}\in J_M} {f}_{n,\bj_{-d}}^{+}(\bx_{-d}).
	\end{align}
	Easy to see that ${f}_{n, \Sigma}(\bx)$ is still a ReLU network. 
	Correspondingly, define such structured DNN family to be ${\cF}_n$, which is $\tilde{\cF}_{n,\bj_{-d}}^+$ stacked in parallel for $\bj_{-d}\in J_M$. 
	See Figure \ref{fig:size} for illustration. 
	Overall, the size of $\cF_n$ satisfies $L_n\lesssim \max_{\bj_{-d}}\{\tilde{L}_{n, \bj_{-d}}\}$, $ N_n\lesssim M^{d-1}\cdot \max_{\bj_{-d}}\{\tilde{N}_{n, \bj_{-d}}\}.$ 
	Recall that a larger $\kappa$ demands a larger network size for approximation. 
	Thus the requirement on $L_n, N_n$ is mainly determined by the region with the biggest $K(\bx)$. 

	Due to the formulation of $\cF_n$, $\hat{f}_n$ can be written in the form of (\ref{eqn:sum}) as $\hat{f}_n=\sum_{\bj_{-d}\in J_M}\hat{f}_{n, \bj_{-d}}$. 
	Denote $\tilde{f}_{n,\bj_{-d}}$ as the projection of $f^*$ to $\cF_{n,\bj_{-d}}$, i.e., $\tilde{f}_{n,\bj_{-d}} = \argmin_{f\in\cF_{n,\bj_{-d}}}\|f-f^*\|_\infty$.
	Under event $E_\xi$, we have that for any $\bj_{-d}\in J_M$,
	\begin{align}
		\label{eqn:local}
		R_{n,\bj_{-d}}({\hat{f}_n}) 
		= R_{n,\bj_{-d}}(\hat{f}_{n,\bj_{-d}}) 
		=\min_{f\in\tilde{\cF}_{n,\bj_{-d}}}R_{n,\bj_{-d}}({f})
		\le R_{n,\bj_{-d}}(\tilde{f}_{n,\bj_{-d}}).
	\end{align}
	The second equality is guaranteed by event $E_\xi$ and property (P1).
	The last inequality is due to empirical risk minimization and the fact that $\tilde{f}_{n,\bj_{-d}}\in \tilde{\cF}_{n,\bj_{-d}}$. 
	Equation \eqref{eqn:local} indicates that the global empirical minimizer within $\cF_n$ also gives rise to the empirical minimizer locally within each $D_{\bj_{-d}}$.
	
	\begin{figure}[t]
		\centering
		\includegraphics[width=0.55\textwidth]{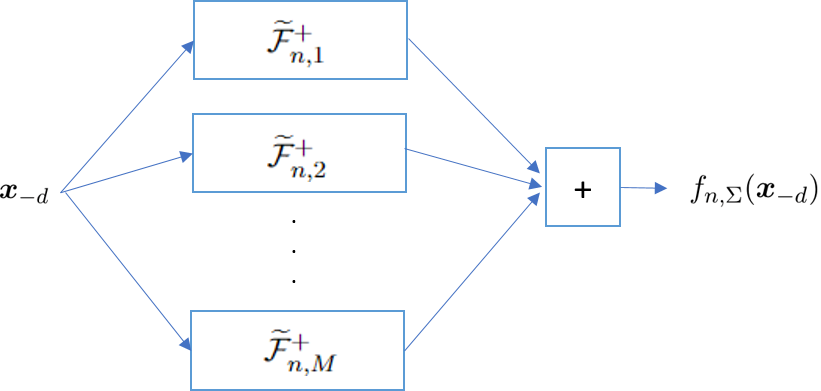}
		\caption{Illustration of the estimator DNN family $\cF_n$ with $d=2$.}
		\label{fig:size}
	\end{figure}

	\begin{remark}[Structured DNN]
		The constructed DNN classifier $\hat{f}_n\in\cF_n$ in Theorem \ref{thm:global_estimator} has special structures and is sparsely connected as illustrated in Figure \ref{fig:size}. 
		Such a structural requirement is not uncommon in nonparametric studies of deep learning where almost all DNN estimators are constructed with special structures \citep{schmidt2020nonparametric, bauer2019deep, imaizumi2018deep}. 
	\end{remark}

	\subsection{Global convergence analysis}
	In this section, we evaluate the convergence rate of the global classifier $\hat{f}_n\in\cF_n$.  
	Through our construction process, we have shown that for any $\bj_{-d}\in J_M$, \eqref{eqn:global} satisfies under $E_\xi$. 
	Intuitively, even though $K(\bx)$ can vary along the decision boundary, the convergence rate will be dominated by the region around $\argmin_{\bx} K(\bx)$. We have the following theorem on the overall convergence rate.

	\begin{theorem}
		\label{thm:global_estimator}
		Under the smooth boundary fragments setting (\ref{eqn:boundary_fragment}),
		assume conditions (M1,M2). Denote $\kappa^- = \inf_{\bx\in[0,1]^d}K(\bx), \kappa^+ = \sup_{\bx\in[0,1]^d}K(\bx)$. 
		Let $\cF_n$ be a ReLU DNN family with proper architectures specified in Section \ref{sec:construction} and size constraint
		\[
		{N}_n {L}_n = O\rbr{n^{\ \frac{\kappa^+(d-1)/2}{(\kappa^++2)\beta + (d-1)\kappa^+}}\cdot \log^{d+1} (n)}.
		\]
		Then, with probability tending to one, the empirical 0-1 loss minimizer within $\cF_n$ satisfies 
		\begin{align*}
			\inf_{\hat{f}_n\in\cF_n}\sup_{G^*\in \cG^*_\beta }\EE(R(\hat{f}_n)-R({G^*})) = \tilde{O}\rbr{n^{-\frac{(\kappa^-+1)\beta}{(\kappa^-+2)\beta+ (d-1)\kappa^-}}}.
		\end{align*}
	\end{theorem}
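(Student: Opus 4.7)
The plan is to reduce the global excess risk to a sum of local excess risks over the cells $D_{\bj_{-d}}$, invoke Theorem \ref{thm:local_estimator} cell by cell, and exploit the local smoothness of $K(\cdot)$ granted by (M2) to close the gap between $\kappa^+_{\bj_{-d}}$ and $\kappa^-_{\bj_{-d}}$ within each cell. Throughout, I would condition on the event $E_\xi$ introduced in Section \ref{sec:construction}, which holds with probability tending to one as long as $nM^{d-1}\xi\to 0$; on its complement the 0-1 excess risk is bounded by a constant and contributes $o(1)$ to the expectation.

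First, I would decompose $\cE(\hat{f}_n, G^*) = \sum_{\bj_{-d}\in J_M} d_{\bj_{-d}}(\hat{G}_n, G^*)$. By the divide-and-conquer construction of $\cF_n$ and properties (P1)--(P3), together with inequality \eqref{eqn:local}, on $E_\xi$ the restriction of $\hat{f}_n$ to each cell $D_{\bj_{-d}}$ coincides with the empirical 0-1 loss minimizer over $\tilde{\cF}_{n,\bj_{-d}}$. Applying Theorem \ref{thm:local_estimator} with cell-wise exponents $\kappa^-_{\bj_{-d}}=\inf_{D_{\bj_{-d}}\cap\partial G^*}K$ and $\kappa^+_{\bj_{-d}}=\sup_{D_{\bj_{-d}}\cap\partial G^*}K$ then bounds each summand by the local rate of that theorem.

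Second, I would invoke (M2) to show $\kappa^+_{\bj_{-d}}-\kappa^-_{\bj_{-d}}\le C_K(\sqrt{d-1}/M)^\alpha$. Choosing $M$ to grow polylogarithmically, e.g.\ $M\asymp(\log n)^{1/\alpha}$, drives this gap to zero, and a first-order expansion of the exponent in Theorem \ref{thm:local_estimator} around $\kappa^+_{\bj_{-d}}=\kappa^-_{\bj_{-d}}$ shows that the local rate is asymptotically equivalent to $n^{-(\kappa^-_{\bj_{-d}}+1)\beta/((\kappa^-_{\bj_{-d}}+2)\beta+(d-1)\kappa^-_{\bj_{-d}})}$ up to an $n^{O(1/\log n)}=O(1)$ correction. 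Since the map $\kappa\mapsto(\kappa+1)\beta/((\kappa+2)\beta+(d-1)\kappa)$ is monotone, the slowest local rate is achieved on the cell realizing $\kappa^-$ globally, and this cell dictates the overall rate.

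Third, I would tie the pieces together. Summing over $M^{d-1}\lesssim(\log n)^{(d-1)/\alpha}$ cells introduces at most a polylogarithmic factor, absorbed by $\tilde{O}$. The strip $D_\xi$ has Lebesgue measure bounded by $2(d-1)M^{d-1}\xi$, so setting $\xi=n^{-2}$ (say) makes $\int_{D_\xi}|p-q|\,d\bx$ negligible relative to the target rate while preserving $\PP(E_\xi)\to 1$. For the architecture, $L_n\lesssim\max_{\bj_{-d}}\tilde{L}_{n,\bj_{-d}}$ and $N_n\lesssim M^{d-1}\max_{\bj_{-d}}\tilde{N}_{n,\bj_{-d}}$; the worst per-cell size is controlled by the cell with the largest $\kappa^+_{\bj_{-d}}\le\kappa^+$, giving the $n^{\kappa^+(d-1)/2/((\kappa^++2)\beta+(d-1)\kappa^+)}\log^2 n$ bound from Theorem \ref{thm:local_estimator}, and the additional $M^{d-1}$ width factor produces the $\log^{d+1} n$ overhead stated in the theorem.

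The main obstacle is the bookkeeping in the second step: the exponent in Theorem \ref{thm:local_estimator} is a nonlinear rational function of both $\kappa^+_{\bj_{-d}}$ and $\kappa^-_{\bj_{-d}}$, and I must rigorously check that a perturbation of size $M^{-\alpha}$ in $\kappa^+_{\bj_{-d}}$ shifts the exponent by at most $O(1/\log n)$, so that the resulting multiplicative distortion $n^{O(1/\log n)}$ remains bounded. Simultaneously, the choices of $M$ and $\xi$ must satisfy three competing constraints — shrinking the within-cell oscillation of $K$, keeping $nM^{d-1}\xi\to 0$, and respecting the $\log^{d+1} n$ width budget — and one should verify that the construction in Section \ref{sec:construction} still produces a valid DNN meeting properties (P1)--(P3) for these parameters. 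Once these balances are in place, the resulting upper bound matches the minimax lower bound of Theorem \ref{thm:lower} with $\kappa=\kappa^-$, yielding optimality.
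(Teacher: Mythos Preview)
Your proposal is correct and follows essentially the same approach as the paper's proof: decompose the global excess risk into local pieces over the grid cells, apply Theorem \ref{thm:local_estimator} on each cell via \eqref{eqn:local} under the event $E_\xi$, use (M2) to bound $\kappa^+_{\bj_{-d}}-\kappa^-_{\bj_{-d}}\lesssim M^{-\alpha}$ so that the exponent perturbation costs only a factor $n^{O(1/\log n)}=O(1)$, and then sum over the $M^{d-1}$ cells. The paper makes the concrete choices $M=\log n$ and $\xi=n^{-2}$ rather than your $M\asymp(\log n)^{1/\alpha}$, but the structure of the argument is otherwise identical (your choice is in fact more careful for general $\alpha\in(0,1]$, at the price of a larger polylogarithmic width factor than the stated $\log^{d+1}n$).
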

	Notice that the size constraint on $N_nL_n$ only concerns $\kappa^+$ while the final convergence rate only depends on $\kappa^-$. This is consistent with Remark \ref{rmk1} where we state that the bottleneck for convergence is $\kappa^-$ while that for approximation is $\kappa^+$. The ``with high probability" argument comes from the $E_\xi$, which is an artifact of the proof and may be relaxed. 
	
	The convergence rate in Theorem \ref{thm:global_estimator} matches the lower bound in Theorem \ref{thm:lower} and is statistically optimal up to a logarithmic term. 
	Combining Theorems \ref{thm:local_estimator} and \ref{thm:global_estimator},
	we conclude that, when $K(\bx)$ is highly non-constant in the sense that $\kappa^+\to\infty$, the proposed localized classifier $\hat{f}_n$ has faster rate of convergence than the regular DNN classifier.

	\section{Main results -- breaking the ``curse of dimensionality"}
	\label{sec:dim}
	High-dimensional data are often structured. Taking images as an example, nearby pixels are often highly correlated while distant ones are more independent. The support of $d$-dimensional real images is speculated to be degenerate since not every combination of pixel values is relatable to real-life objects. To understand why deep learning models are not struggling in ultra-high dimensions, it is of great importance to investigate how well they adapt to various structures underlying the data. 
	To this end, many low-dimensional structures have been investigated, e.g., low-dimensional manifold \citep{schmidt2019deep,hamm2020adaptive}, hierarchical interaction max-pooling model \citep{kohler2020rate}, teacher student setting \citep{hu2020sharp}, etc.
	
	In this section, as a proof of concept, we focus on a particular structural assumption -- compositional smoothness structure \citep{schmidt2020nonparametric} -- and show that DNN classifiers can adapt to this low-dimensional structure and optimal convergence rate that only depends on the effective dimension is achievable.

	\subsection{Smooth boundary with compositional structure}
	Recall the smooth boundary fragment setting (\ref{eqn:boundary_fragment}). Instead of $\beta$-smooth, we assume $f^*$ to be compositions of smooth functions. 
	This assumption is first considered in \cite{schmidt2020nonparametric} for the regression function and here we are borrowing it for classification. 
	Assume $f^*$ is of the form 
	\begin{equation}
		\label{comp}
		f^* = h_l \circ h_{q-1} \circ \ldots \circ h_1 \circ h_0
	\end{equation}
	where each $h_i :[a_i,b_i]^{d_i} \rightarrow [a_{i+1},b_{i+1}]^{d_{i+1}}$ and denote the components of 
	$h_i$ by $\{h_{ij}\}_{j=1}^{d_{i+1}}$. Let $t_i$ be the maximal number of variables $h_{ij}$'s depend on. Thus, each $h_{ij}$ is a $t_i$-variate function. 
	We further assume that each function $h_{ij}$ shares the same H\"older smoothness $\beta_i.$ 
	Define
	\[
	\bar{\beta}_i :=\beta_i\prod_{j=i+1}^l (\beta_j\wedge 1),\quad \quad  i^* =  \argmax_{i=0,1, \cdots, l}\ n^{-\frac{2\bar{\beta}_i}{2\bar{\beta}_i+t_i}}.
	\]
	The overall effective smoothness and effective dimension of $f^*$ in \eqref{comp} can be described by $\beta^{*} = \bar{\beta}_{i^*}$ and $d^*=t_{i^*}$, respectively. 
	In this setting, $n^{-{2\beta^{*}}/({2\beta^{*}+d^*})}$ is proven to be the best possible square loss estimation rate from regression \citep{schmidt2020nonparametric}. 
	Let $\cC(d^*, \beta^{*})$ be the corresponding classifier class.
	The compositional smoothness is a generalization of the regular smoothness. If $f^*(\bx)$ is a $(d-1)$-dimensional $\beta$-smooth function as in (\ref{eqn:boundary_fragment}), then $l=0, \beta^{*}=\beta, d^*=d-1$.

	\subsection{Optimal rate of convergence}
	We first establish the following convergence rate lower bound under the proposed setting.  
	\begin{theorem}
		\label{lower1}
		Assume condition (M1) with noise exponent $\kappa =\inf_{\bx\in \partial G^*} K(\bx)$ and the compositional smoothness structure (\ref{comp}) on the boundary fragment assumption (\ref{eqn:boundary_fragment}). The 0-1 loss excess risk has the following lower bound
		\begin{align*}
			\inf_{\hat{f}_n}\sup_{C^*\in\cC(d^*, \beta^{*})}\EE[\cE(\hat{f}_n,C^*)]\gtrsim  
			\left(\frac{1}{n}\right)^{\frac{\beta^{*}(\kappa+1)}{\beta^{*}(\kappa+2) + d^*\kappa}}. 
		\end{align*}
	\end{theorem}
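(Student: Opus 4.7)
The proof will follow the standard minimax lower bound recipe used in the proof of Theorem \ref{thm:lower}: reduce to multiple-hypothesis testing via Assouad's lemma, exhibit a finite family of candidate boundary functions $\{f^*_\omega\}_{\omega \in \{0,1\}^K}$, optimize the separation-versus-KL-divergence trade-off, and read off the rate. The essential new ingredient compared with Theorem \ref{thm:lower} is that the competing boundary functions must lie in the compositional class $\cC(d^*, \beta^*)$ rather than in the ordinary H\"older class $\cH(d-1, \beta)$. My strategy is to embed a full $\beta^*$-H\"older ball on a $d^*$-dimensional domain into the compositional class, perform the classical Mammen-Tsybakov packing inside the embedded ball, and then run the Assouad computation exactly as in Theorem \ref{thm:lower} with $\beta \to \beta^*$ and $d-1 \to d^*$.

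For the embedding, I would fix $h_j$ for $j \neq i^*$ to be simple deterministic maps: coordinate projections or affine embeddings for $j < i^*$, and identity-like maps for $j > i^*$ whose Lipschitz constant is at most $1$. The perturbations are introduced inside $h_{i^*}$ as bump functions in a $\beta_{i^*}$-H\"older ball on $[a_{i^*}, b_{i^*}]^{t_{i^*}}$. By the chain-rule bookkeeping underlying the definition of $\bar{\beta}_i$, the resulting $f^* = h_l \circ \cdots \circ h_0$ inherits effective smoothness $\bar{\beta}_{i^*} = \beta^*$ in $t_{i^*} = d^*$ active coordinates. Following Mammen-Tsybakov, I then pack the graph of a baseline $f^*_0$ with $K \asymp h^{-d^*}$ disjoint bumps of spatial width $h$ and vertical amplitude $c h^{\beta^*}$, indexed by $\omega \in \{0,1\}^K$. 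Bounded densities $p, q$ on $[0,1]^d$ are constructed so that the boundary is $\{x_d = f^*_\omega(\bx_{-d})\}$ and $m_{\bx}(t) \asymp |t|^{1/\kappa}$ in a tubular neighborhood, delivering (M1) with $K(\bx) \equiv \kappa$ uniformly in $\omega$.

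With these ingredients the Assouad calculation is standard. Two neighboring hypotheses (differing in exactly one bump) produce decision regions whose 0-1 excess risk differs by $\asymp h^{d^*} \cdot h^{\beta^*(\kappa+1)/\kappa}$, since shifting the boundary by $h^{\beta^*}$ integrates $m_{\bx}(t) \asymp t^{1/\kappa}$ over a strip of height $h^{\beta^*}$. The corresponding per-sample $\chi^2$/KL divergence is of order $h^{d^*} h^{\beta^*(\kappa+2)/\kappa}$. Assouad's inequality then yields
\begin{align*}
\inf_{\hat f_n}\sup_\omega \EE\, \cE(\hat f_n, G^*_\omega) \;\gtrsim\; K\cdot h^{d^*}\cdot h^{\beta^*(\kappa+1)/\kappa}\cdot\bigl(1 - \sqrt{n\, h^{d^*} h^{\beta^*(\kappa+2)/\kappa}}\bigr).
\end{align*}
Balancing $n\, h^{d^*}\, h^{\beta^*(\kappa+2)/\kappa} \asymp 1$ gives $h \asymp n^{-\kappa/(d^*\kappa + \beta^*(\kappa+2))}$ and, combined with $K h^{d^*} \asymp 1$, produces exactly the claimed rate $n^{-\beta^*(\kappa+1)/(\beta^*(\kappa+2) + d^*\kappa)}$.

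The main obstacle is the embedding step: verifying that every perturbed compositional function really lies in $\cC(d^*, \beta^*)$ with H\"older constants independent of $\omega$, and that the chain-rule bookkeeping actually realizes the effective smoothness $\beta^*$ (rather than something smaller) once all outer maps are composed in. This is the inverse direction of Schmidt-Hieber's upper-bound construction and requires tracking H\"older norms through each level of composition with care; in particular, the maps $h_j$ for $j > i^*$ must be chosen so that their composition contributes exactly the factor $\prod_{j>i^*}(\beta_j \wedge 1)$ to the effective smoothness. A secondary subtlety is designing $p, q$ \emph{simultaneously} with the hypothesis family so that (M1) is preserved uniformly in $\omega$ and the KL bound is not inflated near the supports of individual bumps; this is handled by making $p, q$ depend on $\omega$ only through a vertical translation by $f^*_\omega$ of a fixed template density in the $x_d$ direction.
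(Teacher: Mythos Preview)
Your proposal is correct and follows essentially the same route as the paper's own proof: both embed the Mammen--Tsybakov bump family at the bottleneck level $i^*$ of the composition (with the inner maps taken as coordinate projections and the outer maps as $x\mapsto x^{\beta_j\wedge 1}$ so that the composed bump amplitude becomes $h^{\beta^*}$), construct densities realizing $m_{\bx}(t)\asymp |t|^{1/\kappa}$, and run Assouad's lemma with the trade-off balanced exactly as you write. The only cosmetic differences are that the paper works with Hellinger distance and the bound $[1-H^2/2]^n$ rather than KL/$\chi^2$, keeps one of the two densities ($g_0$) fixed while perturbing only $f_\omega$ (rather than translating both vertically), and presents the detailed computation for $d^*=1$; none of these change the argument or the rate.
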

	The convergence rate lower bound adapts to the compositional smoothness structure and only depends on $\beta^{*}$ and $d^*$.
	Next, we evaluate how well DNNs can recover the decision boundary. 
	
	To handle the compositional smoothness, we utilize the corresponding ReLU DNN family in \cite{schmidt2020nonparametric} as our building block and go through the same construction process as in Section \ref{sec:construction}. 
	To distinguish from the regular case in Section \ref{sec:rate}, we add $^*$ to the notations of the network family. 
	Locally, similar to the requirement in Theorem \ref{thm:local_estimator}, we modify $\tilde{\cF}_{n,\bj_{-d}}$ according to \cite{schmidt2020nonparametric} to $\tilde{\cF}_{n,\bj_{-d}}^*$ with $\tilde{L}_n^*\asymp \log(n)$, $$\tilde{N}_n^* \asymp n^{\ \frac{\kappa^+(\kappa^-+1)d^*}{(\kappa^-+2)(\kappa^+ +1)\beta^{*} + d^*\kappa^+(\kappa^-+1)}},\quad \tilde{S}_n^*\asymp n^{\ \frac{\kappa^+(\kappa^-+1)d^*}{(\kappa^-+2)(\kappa^+ +1)\beta^{*} + d^*\kappa^+(\kappa^-+1)}}\cdot \log(n),$$ which is sparsely connected. 
	Similarly, we extend $\tilde{\cF}_{n,\bj_{-d}}^*$ to $\tilde{\cF}_{n,\bj_{-d}}^{+*}$ and stack them together to get $\cF_n^*$ for global estimation as in Section \ref{sec:construction}. 
	Similar to Theorem \ref{thm:global_estimator}, the DNN family $\cF_n^*$ in Theorem \ref{thm:global_estimator_eff} is also constructed with structures and sparsely-connected. The size requirement is slightly different comparing to that in Theorem \ref{thm:global_estimator}, due to a different approximation scheme.

	\begin{theorem}
		\label{thm:global_estimator_eff}
		Under the compositional smoothness setting (\ref{comp}), assume condition (M1,M2) and denote $\kappa^- = \inf_{\bx\in[0,1]^d}K(\bx), \kappa^+ = \sup_{\bx\in[0,1]^d}K(\bx)$. 
		Let $\cF_n^*$ be a ReLU DNN family with proper architectures and size constraint $L_n^*\asymp \log (n)$,
		\[
		{N}_n^*\asymp n^{\ \frac{\kappa^+ d^*}{(\kappa^++2)\beta^{*} + d^*\kappa^+}}\log^{d-1}(n),\quad S_n^*\asymp n^{\ \frac{\kappa^+ d^*}{(\kappa^++2)\beta^{*} + d^*\kappa^+}}\log^{d}(n).
		\]
		Then, with probability $\xrightarrow{n\to\infty} 1$, the empirical 0-1 loss minimizer within $\cF_n$ satisfies 
		\begin{align*}
			\inf_{\hat{f}_n\in\cF_n^*}\sup_{C^*\in \cC(d^*,\beta^{*}) }\EE(R(\hat{f}_n)-R({C^*})) = \tilde{O}\rbr{n^{-\frac{(\kappa^-+1)\beta^{*}}{(\kappa^-+2)\beta^{*}+\kappa^- d^*}}}.
		\end{align*}
	\end{theorem}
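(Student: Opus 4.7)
The plan is to mirror the proof of Theorem \ref{thm:global_estimator}, replacing the Hölder approximation step (which used \citet{lu2020deep}) by the compositional-smoothness approximation of \citet{schmidt2020nonparametric}, and re-running the divide-and-conquer construction of Section \ref{sec:construction}. First I would establish a compositional analog of Theorem \ref{thm:local_estimator}: fix a region $D_{\bj_{-d}}$ on which $\kappa^-_{\bj_{-d}} \le K(\bx) \le \kappa^+_{\bj_{-d}}$, take $\tilde{\cF}^*_{n,\bj_{-d}}$ to be the Schmidt-Hieber sparse ReLU architecture with $\tilde L_n^* \asymp \log n$ and $\tilde N_n^*, \tilde S_n^*$ as stated, and show that the local empirical $0$–$1$ loss minimizer satisfies
\[
R_{\bj_{-d}}(\hat f) - R_{\bj_{-d}}(G^*) \;=\; \tilde O\rbr{n^{-\frac{(\kappa^-_{\bj_{-d}}+1)\beta^*}{(\kappa^-_{\bj_{-d}}+2)\beta^* + \frac{\kappa^-_{\bj_{-d}}+1}{\kappa^+_{\bj_{-d}}+1}d^*\kappa^+_{\bj_{-d}}}}}.
\]
The decomposition of this local excess risk follows the proof of Theorem \ref{thm:local_estimator} verbatim: apply the separation inequality (Lemma \ref{lemma:d_ineq}) to convert the $L_\infty$ approximation error $\|f^*-\tilde f\|_\infty$ into a $0$–$1$ excess-risk approximation bound amplified by the power $1+1/\kappa^+_{\bj_{-d}}$, bound the stochastic term by a bracketing-entropy argument using the network complexity of $\tilde{\cF}^*_{n,\bj_{-d}}$, and balance the two. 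The only substantive change is that the approximation error on $D_{\bj_{-d}}$ now comes from the compositional bound $\|f^*-\tilde f\|_\infty \lesssim \tilde N_n^{*-\beta^*/d^*}$ of \citet{schmidt2020nonparametric}, which substitutes $(\beta^*,d^*)$ for $(\beta,d-1)$ throughout.

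Next I would globalize. The DNN family $\cF_n^*$ is built exactly as in Section \ref{sec:construction}: each local estimator is extended to $\tilde{\cF}^{+*}_{n,\bj_{-d}}$ satisfying (P1)–(P3), and these are stacked in parallel. Because the cutoff/extension gadget used to verify (P1)–(P3) is itself a small ReLU sub-network, it is compatible with Schmidt-Hieber architectures, and the extended family inherits sparsity up to multiplicative constants. The resulting global architecture has depth $L_n^* \asymp \log n$, width $N_n^* \asymp M^{d-1}\tilde N_n^*$, and sparsity $S_n^* \asymp M^{d-1}\tilde S_n^*$. Picking $M = M_n$ so that $nM\xi(d-1) \to 0$ and $M \asymp \log n$ both hold (with $\xi$ chosen accordingly) gives $\PP(E_\xi) \to 1$ and produces the advertised $\log^{d-1}n$ and $\log^{d}n$ factors. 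Under $E_\xi$, inequality \eqref{eqn:local} still holds, so the global empirical minimizer $\hat f_n$ restricted to each $D_{\bj_{-d}}$ coincides with the local empirical minimizer within $\tilde{\cF}^*_{n,\bj_{-d}}$, and the compositional-local rate above applies piecewise.

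Finally I would aggregate. Summing $d_{\bj_{-d}}(\hat f_n, G^*)$ across $\bj_{-d} \in J_M$ and exploiting condition (M2), each local oscillation satisfies $\kappa^+_{\bj_{-d}} - \kappa^-_{\bj_{-d}} = O(M^{-\alpha}) = o(1)$, so the local rate exponent on every $D_{\bj_{-d}}$ converges to the unconstrained optimum $(\kappa^-_{\bj_{-d}}+1)\beta^*/((\kappa^-_{\bj_{-d}}+2)\beta^* + d^*\kappa^-_{\bj_{-d}})$. Because this exponent is monotone increasing in $\kappa^-_{\bj_{-d}}$, the bottleneck region is the one that attains $\kappa^- = \inf K$, which controls the sum up to a factor $M^{d-1}$ that is absorbed into $\tilde O(\cdot)$. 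This yields the claimed rate $\tilde O(n^{-(\kappa^-+1)\beta^*/((\kappa^-+2)\beta^* + d^*\kappa^-)})$.

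The main obstacle I expect is technical rather than conceptual: verifying that the Schmidt-Hieber compositional approximation transfers cleanly to a local region $D_{\bj_{-d}}$ of diameter $O(1/M)$ while preserving the structural parameters $(q, d_i, t_i, \beta_i)$ (so that $(\beta^*, d^*)$ do not degrade under restriction), and controlling the $M^{d-1}$-fold overhead from stacking sparse sub-networks in parallel so that the global sparsity still matches the stated $S_n^* \asymp n^{\kappa^+ d^*/((\kappa^++2)\beta^* + d^*\kappa^+)}\log^{d}n$. Coupling this with the $M \to \infty$ limiting argument via (M2), and ensuring the $o(1)$ gap between the local $\kappa^+_{\bj_{-d}}$ and $\kappa^-_{\bj_{-d}}$ exponents is absorbed by the logarithmic slack in $\tilde O(\cdot)$, is where the bookkeeping becomes delicate.
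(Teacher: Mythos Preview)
Your proposal is correct and follows essentially the same route as the paper: the proof of Theorem \ref{thm:local_estimator} is already written in terms of a generic complexity parameter $\rho$ (with $\rho^* = d^*/\beta^*$ in the compositional case via Lemma \ref{smooth}), and the paper's proof of Theorem \ref{thm:global_estimator_eff} then runs the identical divide-and-conquer aggregation with $M=\log n$, $\xi = 1/n^2$, and the (M2)-based exponent collapse $n^{-O(M^{-\alpha})}=O(1)$ that you describe. Your worry about restricting the Schmidt-Hieber approximation to a local region $D_{\bj_{-d}}$ is not a real obstacle, since an $L_\infty$ bound on $[0,1]^{d-1}$ automatically holds on any subregion.
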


	To further illustrate the power of our Theorem \ref{thm:global_estimator_eff}, we consider a special case where the $f^*(\bx)$ is a $(d-1)$-dimensional additive function, i.e., 
	\begin{align}
		\label{additive}
		f^*(\bx_{-d}) = \sum_{i< d}f^*_i(x_i)  = h_1\circ h_0,
	\end{align}
	where $h_0(x_1,\cdots, x_{d-1}) = (g_1(x_1), \cdots, g_{d-1}(x_{d-1}))$ and $h_1(x_1,\cdots, x_{d-1}) = x_1+\cdots+x_{d-1}.$
	In this case, $l=1, \bd= (d-1,d-1), \bt=(1,d-1)$. Assume each $g_i(\bx)$ has the same smoothness $\beta$. Then $\bbeta=(\beta, \infty)$ and we have the following corollary for the convergence rate.
	
	\begin{corollary}
		Under the same setting as in Theorem \ref{thm:global_estimator_eff}, further assume the additive structure (\ref{additive}) on the boundary function. Then the excess risk convergence rate becomes $\tilde{O}\rbr{n^{-\frac{(\kappa+1)\beta}{(\kappa+2)\beta+\kappa}}}$.
	\end{corollary}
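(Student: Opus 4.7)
The plan is to reduce the corollary to a direct application of Theorem \ref{thm:global_estimator_eff} by computing the effective smoothness $\beta^*$ and effective dimension $d^*$ for the additive composition in \eqref{additive}. First I would identify the composition parameters: with $h_0(\bx_{-d})=(g_1(x_1),\dots,g_{d-1}(x_{d-1}))$ and $h_1(u_1,\dots,u_{d-1})=u_1+\cdots+u_{d-1}$, we have $l=1$, $\bd=(d-1,d-1)$, $\bt=(1,d-1)$, and $\bbeta=(\beta,\infty)$. Each component $h_{0j}=g_j$ is a univariate $\beta$-H\"older function, so $t_0=1$, while $h_1$ is linear and thus belongs to $\cH(d-1,\gamma)$ for every $\gamma$, justifying $\beta_1=\infty$.

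Next I would compute $\bar{\beta}_i=\beta_i\prod_{j=i+1}^{l}(\beta_j\wedge 1)$ for $i=0,1$. Since $\beta_1\wedge 1=1$, we get $\bar{\beta}_0=\beta$ and $\bar{\beta}_1=\infty$. The associated regression rate exponents are $\tfrac{2\bar{\beta}_0}{2\bar{\beta}_0+t_0}=\tfrac{2\beta}{2\beta+1}<1=\tfrac{2\bar{\beta}_1}{2\bar{\beta}_1+t_1}$, so $i^*=0$, giving $\beta^*=\bar{\beta}_0=\beta$ and $d^*=t_0=1$. Intuitively the bottleneck is the inner layer of univariate smooth functions, while the outer linear aggregation contributes no additional statistical cost.

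Finally I would plug $\beta^*=\beta$, $d^*=1$ and $\kappa^-=\kappa^+=\kappa$ (the single noise exponent adopted in the corollary statement) into the rate of Theorem \ref{thm:global_estimator_eff}, obtaining
\[
n^{-\frac{(\kappa^-+1)\beta^*}{(\kappa^-+2)\beta^*+\kappa^- d^*}}=n^{-\frac{(\kappa+1)\beta}{(\kappa+2)\beta+\kappa}},
\]
which is exactly the claimed rate up to the logarithmic factors absorbed in $\tilde{O}(\cdot)$. The DNN architecture required is the one prescribed by Theorem \ref{thm:global_estimator_eff} specialized to these values of $\beta^*,d^*$, so no new construction is needed.

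The only subtlety, and the point I would verify carefully, is that the framework of Section \ref{sec:dim} tolerates the degenerate value $\beta_1=\infty$: the approximation result from \cite{schmidt2020nonparametric} used as a black box in Theorem \ref{thm:global_estimator_eff} needs the outer linear layer to be realized exactly (or to negligible error) by a ReLU network, which is standard since summation is implementable with $O(d)$ weights and depth $O(1)$. Once this is observed, the rest is arithmetic; no new stochastic or approximation bound is required.
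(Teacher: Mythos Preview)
Your proposal is correct and matches the paper's approach exactly: the paper does not give a separate proof for this corollary but simply records the composition parameters $l=1$, $\bd=(d-1,d-1)$, $\bt=(1,d-1)$, $\bbeta=(\beta,\infty)$ immediately before stating it, leaving the substitution $\beta^*=\beta$, $d^*=1$ into Theorem~\ref{thm:global_estimator_eff} to the reader. One minor remark: you need not assume $\kappa^-=\kappa^+=\kappa$, since the convergence rate in Theorem~\ref{thm:global_estimator_eff} already depends only on $\kappa^-$; the $\kappa$ in the corollary simply stands for $\kappa^-$.
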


	The compositional smoothness assumption (\ref{comp}) may not be the best choice for describing natural data. But as a proof of concept, we have shown that DNN classifiers can adapt to this specific low-dimensional assumption while achieving the optimal convergence rate and avoid the ``curse of dimensionality". 
	In our localized analysis, various other low-dimensional structural assumptions on the decision boundary can be utilized to break the ``curse of dimensionality", e.g., low-dimensional manifold support \citep{schmidt2019deep},  low intrinsic dimension \citep{hamm2020adaptive}, etc. 
	We conjecture that as long as DNNs can adapt to these assumptions in the regression setting and achieve optimal convergence rates, such adaptions can be potentially extended to the smooth boundary fragment classification setting.

	\begin{figure}
		\centering
		\hspace{-2cm}
		\subfigure{
			\label{fig:data}
			\includegraphics[width=0.5\textwidth]{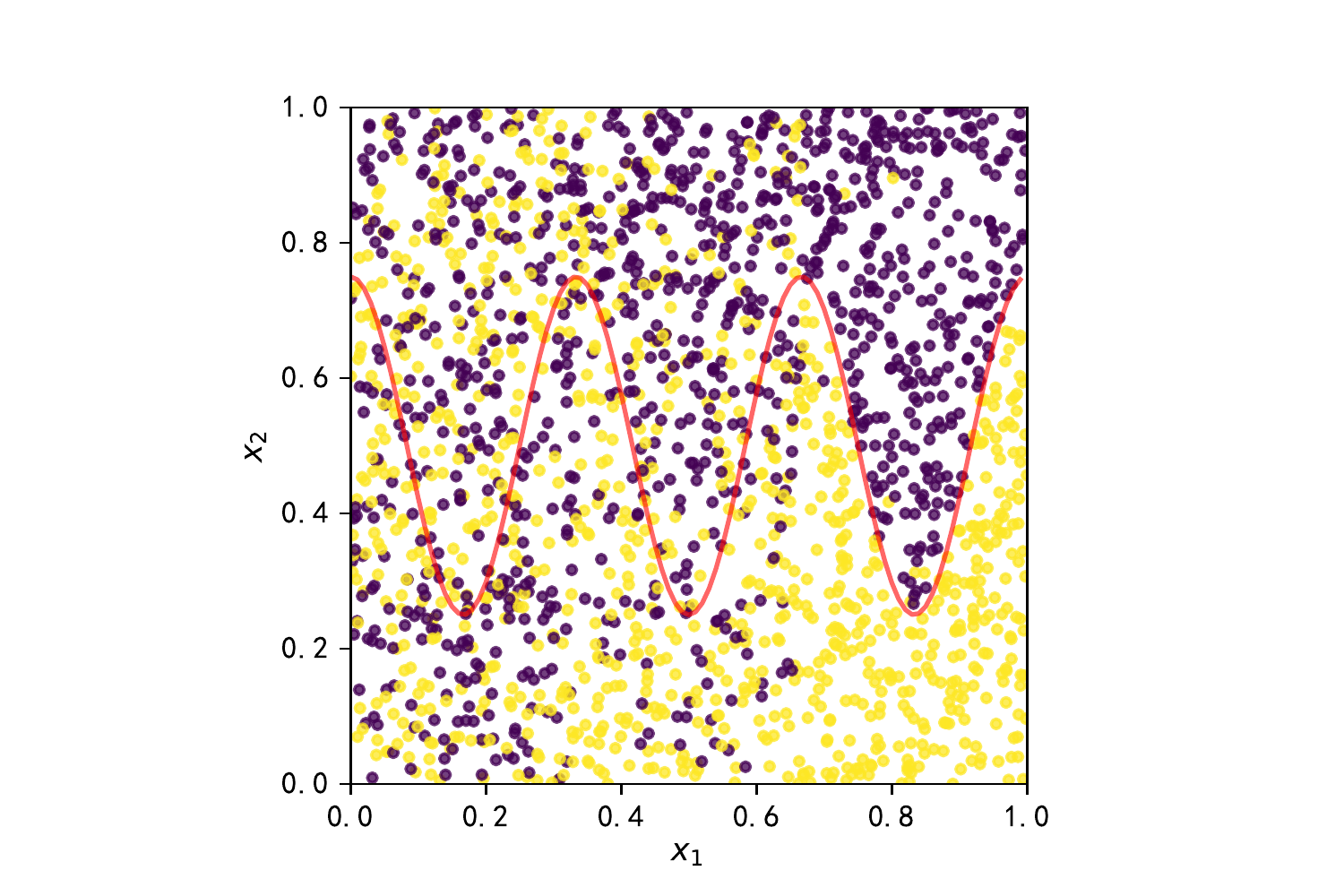}
		}
		\subfigure{
			\label{fig:kappa}
			\includegraphics[width=0.5\textwidth]{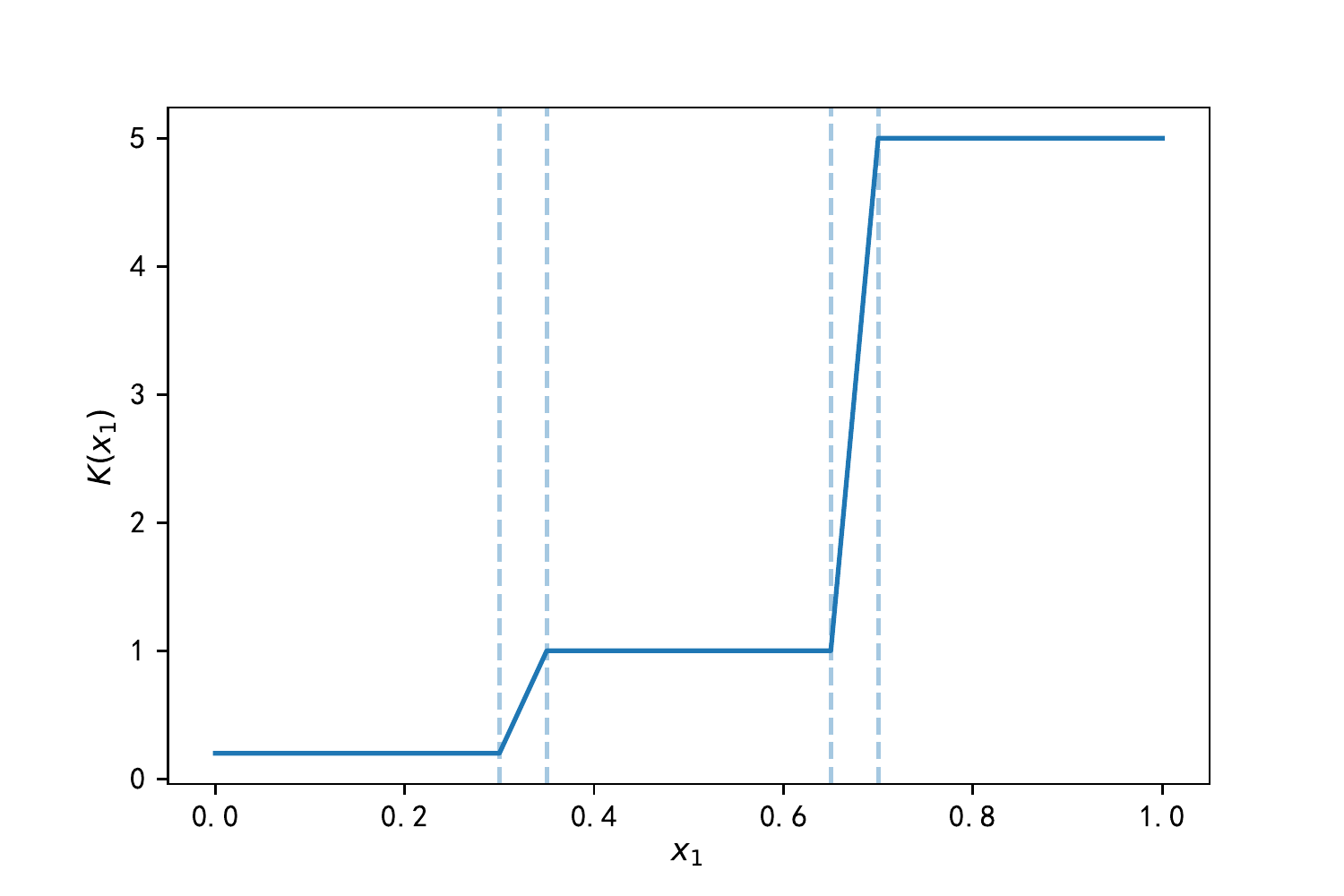}}
		\hspace{-1cm}
		\caption{ (a) 2000 data points in $[0,1]^2$ with $k=100$. The colors represent different classes and the red curve is the optimal decision boundary. From left to right, the separation measured by $K(\bx)$ increases. 
			(b) Illustration of the designed $K(\bx)$ in the synthetic data when $k=5$. }
	\end{figure}
	
	\section{Numerical experiments}
	In this section, we corroborate our theory with numerical experiments on 2-dimensional synthetic data.
	Let the marginal distribution of $\bx$ be uniform on $[0, 1]^2$, i.e., $p(\bx) + q(\bx) \equiv 2$. 
	Consider the optimal decision boundary given by $x_2 = f^*(x_1),$ where $f^*(x_1) =  \cos(6\pi x_1)/4 + 1/2$. Denote $\delta(\bx) = \frac{4}{3}\rbr{x_2 - f^*(x_1)}$, which ranges from $-1$ to $1$, representing the signed distance to the decision boundary along $x_2$.
	Specify the conditional probability as
	\begin{align*}
		2\eta(\bx)- 1 = 
		\sign(\delta(\bx))\cdot {\delta(\bx)}^{K(\bx)}.
	\end{align*}
	It's easy to verify that $\eta(\bx)$ is well-defined and (M1) holds with $K(\bx)$. Due to the symmetry of $f^*$ around $1/2$, we have $\PP(\eta(\bx)>1/2) = 1/2$. 
	Notice that given $\eta(\bx)$, we can write $p(\bx) = 2\eta(\bx)$ and $q(\bx) = 2-2\eta(\bx)$. 
	To reflect the inconsistent separation, we choose $K(\bx)$ to be piecewise linear, where (M2) also holds with $\alpha=1$. Specifically, let $K(x_1)=1/k$ for $x_1\in[0, 0.3]$, $K(x_1)=1$ for $x_1\in[0.35, 0.65]$, and $K(x_1)=k$ for $x_1\in[0.7, 1]$. When $k=1$, (N) and (N$^+$) both hold with $\kappa=1$ and the separation is consistent. The larger the $k$, the more inconsistent the separation. We choose $k$ to vary in $\{1, 5, 10, 100\}$. 
	Figure \ref{fig:kappa} draws $K(\bx)$ when $k=5$ and Figure \ref{fig:data} illustrates the data distribution with 2000 samples when $k=100$.

	On the synthetic data, we train two classifiers: the regular DNN classifier ($\tilde{f}_n$) and the localized DNN classifier ($\hat{f}_n$). To be more specific, we choose $\tilde{f}_n$ to be a 3-layer ReLU network with width 250 and $\hat{f}_n$ to be the composition of $M=5$ local ReLU classifiers, each with depth 3 and width 100. The total number of weights for $\tilde{f}_n$ is slightly larger than that of $\hat{f}_n$. 
	As a surrogate to the hard-to-optimize 0-1 loss, we choose cross-entropy as the training loss, whose effectiveness in recovering the 0-1 loss minimizer has been investigated \citep{kohler2020statistical}. 
	For each setup, we run 10 replications and monitor the test accuracy on unseen test data. The results are presented in Figure \ref{fig:acc}, where we can see that when $k=1$, both regular and localized classifiers have comparable performances, but as $k$ gets larger, the gap widens and the localized classifier significantly outperforms. This is consistent with our theoretical findings.
	The superiority in convergence rate is better reflected in Figure \ref{fig:rate}, where we plot the log-log curve between the empirical excess risk and sample size when $k=5$. The slope for localized classifiers is significantly steeper, indicating faster convergence rate. 
	More experiment details can be found in Section \ref{sec:exp}.

		\begin{figure}
		\centering
		\hspace{-1cm}
		\subfigure{
			\label{fig:acc}
			\includegraphics[width=0.5\textwidth]{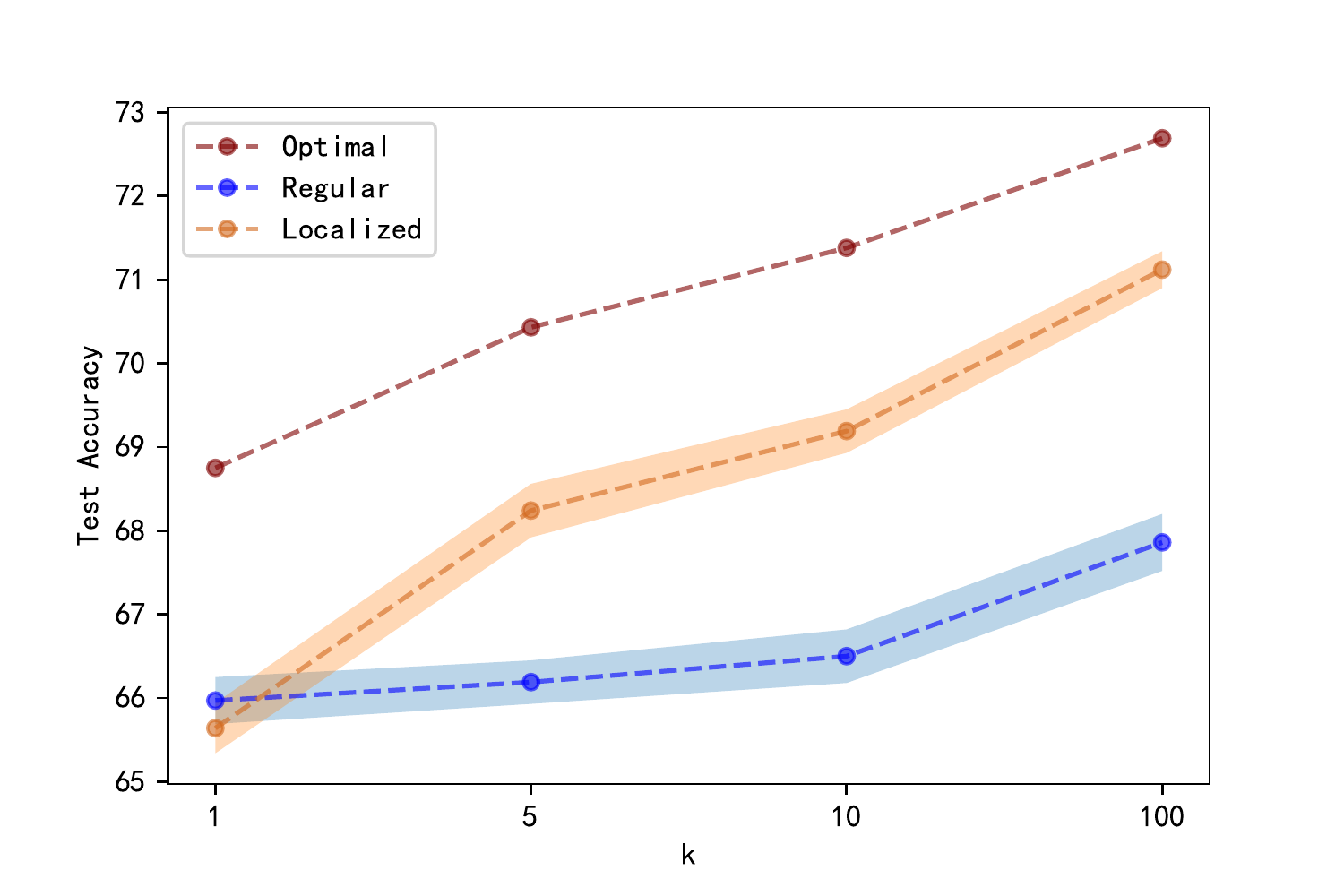}
		}\hspace{-0cm}
		\subfigure{
			\label{fig:rate}
			\includegraphics[width=0.5\textwidth]{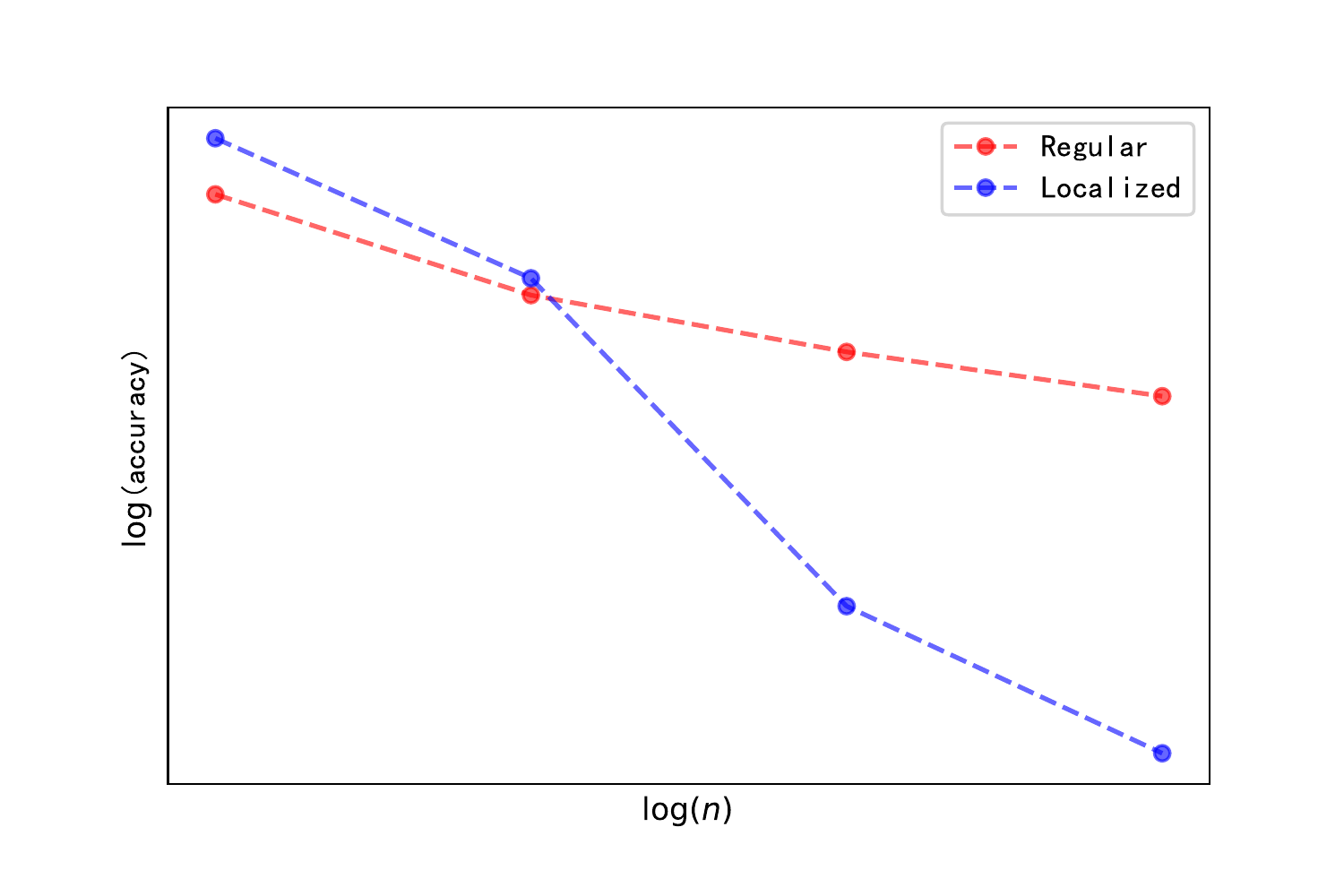}}
		\hspace{-1cm}
		\caption{ 
			(a) The test accuracy curves for different classifiers vs. $k=1, 5, 10, 100$. 
			As $k$ gets larger, the separation between classes are stronger but the consistency along the boundary is weaker. 
			The bands represent 2 times the standard deviation from 10 replications. The optimal classifier refers to the Bayes classifier using $\eta(\bx)$. 
			(b) The log-log curve of the empirical excess risk vs. sample size when $k=10$. The deeper the slope, the faster the convergence rate.}
	\end{figure}

	\section{Discussion}
	\label{sec:discussion}
	This work aims to close the gap between DNN's empirical success and its rate suboptimality in recovering smooth decision boundaries with Tsybakov's noise condition (N).
	Through a finer-grained analysis of the convergence rate, we uncover a potential source of the suboptimality, i.e., inconsistency of the boundary separation. 
	Correspondingly, we propose a new DNN classifier using a divide-and-conquer technique and a novel separation condition that is a localized version of the classical (N), under which optimal convergence rates are established for DNN classifiers with proper architectures. The established statistical optimality can adapt to low-dimensional structures underlying the data and in turn, may explain the absence of the ``curse of dimensionality" in practice.

	However, there are many limitations of this work. 
	First, the nonparametric framework analyzes properties of the empirical risk minimizer, e.g., convergence rate, sample complexity, etc., but doesn't provide guidance to practical optimization and how to find them. 
	Second, the proposed localized separation condition, like Tsybakov's noise condition, is of theoretical interest and cannot be verified for real data such as natural images. Similarly, this work only considers the 0-1 loss, which is also out of theoretical interest rather than practical relevance since it's the most natural and fundamental classification loss. 
	For future work, it is intriguing to study how DNN classifiers perform under popular surrogate losses such as hinge loss or cross-entropy. 
	Our numerical experiments conducted using cross-entropy indicate similar results may also hold for empirical surrogate loss minimizers. 
	Our theory can be further strengthened if we could relax the structural requirement for the DNN family to be more general, e.g, fully-connected, or extend the analysis to popular networks such as convolutional neural networks \citep{krizhevsky2012imagenet}, residual neural networks \citep{he2016deep}, and Transformers \citep{vaswani2017attention}.

	We believe that the proposed localized separation condition and the corresponding localized analysis are general and potentially transferable to other classification methods, as long as they can take advantage of the divide-and-conquer strategy. 
	We hope more investigations can be inspired along this line.

	\section{Technical details}\label{sec:proof}
	
	This paper studies classification under the smooth boundary fragment setting \eqref{eqn:boundary_fragment} with Tsybakov's noise condition \citep{mammen1999smooth}.
	First, we provide proof for Lemma \ref{lemma:m1}, which characterizes the relationship between the classical (N) and the proposed intermediate (N$^+$). 
	
	\noindent\textit{Proof of Lemma \ref{lemma:m1}}
	
	\begin{proof}
		Recall the definition of $K(\bx)$ and condition (M1) that there exists $\epsilon_0>0$ and $0<C_{\epsilon_0}<\infty$ such that for all $\bx\in \partial G^*$ and any $0<t<\epsilon_0$, 
		\[
		\frac{1}{C_{\epsilon_0}} \le \frac{m_{\bx}(t)}{|t|^{1/K(\bx)}} \le C_{\epsilon_0}.
		\]
		The key idea is to treat $\bx_{-d}$ and $x_d$ separately. For any $0<t<{\epsilon_0^{1/\kappa^+}}/{C_{\epsilon_0}}$, we can write
		\begin{align*}
			\QQ(\bx\in[0,1]^d: |p(\bx)-q(\bx)|\le t) &= \int_{[0,1]^{d-1}}\int_0^1\II\{|p(\bx)-q(\bx)|\le t\}dx_d d\bx_{-d} \\
			&\le
			\int_{[0,1]^{d-1}}\int_{-1}^1\II\{m_{\bx_{-d}}(u)\le t\}du d\bx_{-d}\\
			&\le
			2\int_{[0,1]^{d-1}}\int_{0}^1\II\{\frac{u^{1/K(\bx)}}{C_{\epsilon_0}} \le t\}du d\bx_{-d}\\
			&\le
			2\int_{[0,1]^{d-1}}\int_{0}^1\II\{{u^{1/\kappa^-}} \le tC_{\epsilon_0}\}du d\bx_{-d}\\
			&=
			2\int_{[0,1]^{d-1}}{(C_{\epsilon_0}t)^{\kappa^-}} d\bx_{-d}\\
			&= 2{C_{\epsilon_0}t}^{\kappa^-}.
		\end{align*}
		The first equality is by definition. The second inequality is due to condition (M1), which indicates that locally, $m_{\bx}\ge {\epsilon_0^{1/K(x)}}/{C_{\epsilon_0}}\ge {\epsilon_0^{1/\kappa^+}}/{C_{\epsilon_0}}.$
		Now consider (N$^+$) and a set $G\subset[0,1]^d$ containing part of the decision boundary.
		Notice that only lower bound is concerned so if suffices to prove the inequality for a subset $\subset G$. Without loss of generality, let the subset be $[0, \epsilon_G]^d$.
		The analysis is similar to that in $[0,1]$ and 
		for $t$ small enough we have
		\begin{align*}
			\QQ(\bx\in G: |p(\bx)-q(\bx)|\le t) &= 
			\int_{[0,\epsilon_G]^{d-1}}\int_{-\epsilon_G}^{\epsilon_G}\II\{m_{\bx_{-d}}(u)\le t\}du d\bx_{-d}\\
			&\ge
			\int_{[0,\epsilon_G]^{d-1}}\int_{-{\epsilon_G}}^{\epsilon_G}\II\{C_{\epsilon_0}{u^{1/K(\bx)}} \le t\}du d\bx_{-d}\\
			&\ge
			\int_{[0,\epsilon_G]^{d-1}}\int_{-{\epsilon_G}}^{\epsilon_G}\II\{{u^{1/\kappa^+}} \le t/C_{\epsilon_0}\}du d\bx_{-d}\\
			&\ge C_G t^{\kappa^+},
		\end{align*}
		where $C_G$ is some constant depending on the set $G$ and $C_\epsilon$. 
	\end{proof}
	
	Next, we prove some useful lemmas utilizing the proposed localized separation condition (M1,M2).  
	Let $G_f :=\{\bx\in\cX: f(\bx_d)-x_d\ge 0\}$. Then
	we have the following lemma characterizing the relationship between $d_\triangle$ and $d_{p,q}$.
	
	\begin{lemma}
		\label{lemma:d_ineq}
		Under assumption (M1), further assume on some $D\subset\cX$, $0<\kappa^-\le K(\bx)$ for all $\bx\in D$. For any set $G=G_{f} \subset D$ 
		satisfying $\|f-f^*\|_\infty\le \epsilon_0$, the following inequality holds
		\[
		d_\triangle(G,G^*)^{\frac{\kappa^-+1}{\kappa^-}}\lesssim d_{p,q}(G,G^*).
		\]
	\end{lemma}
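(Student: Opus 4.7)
}

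The strategy is to slice both $d_\triangle$ and $d_{p,q}$ along the distinguished direction $x_d$, apply the local lower bound in (M1) to the vertical integrand, and then relate the resulting power-integral to $d_\triangle$ via a convexity (reverse-direction Jensen / Hölder) argument. Because $G$ and $G^*$ are both boundary fragments, the symmetric difference $G\triangle G^*$ has a clean fiber description: for each $\bx_{-d}$, it is the interval between $f(\bx_{-d})$ and $f^*(\bx_{-d})$ in the $x_d$ coordinate. Setting $\delta(\bx_{-d}):=f(\bx_{-d})-f^*(\bx_{-d})$, this gives the two identities
\begin{align*}
d_\triangle(G,G^*)&=\int_{[0,1]^{d-1}} |\delta(\bx_{-d})|\, d\bx_{-d},\\
d_{p,q}(G,G^*)&=\int_{[0,1]^{d-1}}\int_0^{|\delta(\bx_{-d})|} m_{\bx_{-d}}(u)\, du\, d\bx_{-d},
\end{align*}
after restricting the $\bx_{-d}$-integrals to the projection of $D$ (which has Lebesgue measure at most one, since $D\subset[0,1]^d$). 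The hypothesis $\|f-f^*\|_\infty\le \epsilon_0$ ensures that the inner variable $u$ stays in the range where (M1) is valid.

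Next, I would apply (M1) together with the global lower bound $K(\bx)\ge \kappa^-$ on $D$. Since $u\in[0,\epsilon_0]\subset[0,1]$ and $1/K(\bx)\le 1/\kappa^-$, we have $u^{1/K(\bx)}\ge u^{1/\kappa^-}$, so
\[
m_{\bx_{-d}}(u)\ \ge\ \frac{u^{1/K(\bx)}}{C_{\epsilon_0}}\ \ge\ \frac{u^{1/\kappa^-}}{C_{\epsilon_0}}.
\]
Plugging this into the inner integral and integrating in $u$ yields
\[
d_{p,q}(G,G^*)\ \gtrsim\ \int_{[0,1]^{d-1}} |\delta(\bx_{-d})|^{\,1+1/\kappa^-}\, d\bx_{-d}
\ =\ \int_{[0,1]^{d-1}} |\delta(\bx_{-d})|^{\frac{\kappa^-+1}{\kappa^-}}\, d\bx_{-d},
\]
with implicit constant depending only on $\kappa^-$ and $C_{\epsilon_0}$.

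Finally, I would close the argument using Jensen's inequality (equivalently, Hölder with exponent $\alpha=(\kappa^-+1)/\kappa^->1$ on a probability space of total measure at most $1$): for the convex function $t\mapsto t^\alpha$,
\[
\Bigl(\int_{[0,1]^{d-1}} |\delta|\, d\bx_{-d}\Bigr)^{\alpha}
\ \le\ \int_{[0,1]^{d-1}} |\delta|^{\alpha}\, d\bx_{-d}.
\]
Combining this with the previous display and recognizing the left side as $d_\triangle(G,G^*)^{(\kappa^-+1)/\kappa^-}$ gives the claim.

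The main subtlety, rather than any difficult calculation, is bookkeeping around where (M1) is applicable. One must justify that every fiber involved stays within the $\epsilon_0$-tube around the true boundary (guaranteed by $\|f-f^*\|_\infty\le\epsilon_0$) and that the domain of $\bx_{-d}$ has measure at most one (so the Jensen step goes the way we want). The other point worth a line of care is that (M1) is stated on the boundary $\partial G^*$ through $m_{\bx_{-d}}$, yet we implicitly use it along whole vertical segments; this is fine because $m_{\bx_{-d}}$ is itself defined as the vertical profile of $|p-q|$ based at the boundary point $(\bx_{-d},f^*(\bx_{-d}))$. After those verifications, the chain of inequalities above delivers the desired $d_\triangle(G,G^*)^{(\kappa^-+1)/\kappa^-}\lesssim d_{p,q}(G,G^*)$.
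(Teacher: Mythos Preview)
Your proposal is correct and follows essentially the same route as the paper: slice along $x_d$ to express both $d_\triangle$ and $d_{p,q}$ as integrals of $|\delta(\bx_{-d})|$ and $\int_0^{|\delta|} m_{\bx_{-d}}(u)\,du$ respectively, use the lower bound in (M1) together with $K(\bx)\ge\kappa^-$ to get the $u^{1/\kappa^-}$ integrand, integrate in $u$, and then close with Jensen's inequality for the convex map $t\mapsto t^{(\kappa^-+1)/\kappa^-}$ on a domain of measure at most one. Your explicit remarks about why $u^{1/K(\bx)}\ge u^{1/\kappa^-}$ and why the Jensen direction is correct are useful clarifications of steps the paper leaves implicit.
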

	\begin{proof}
		Let $\delta(\bx_{-d}):=|f(\bx_{-d})-f^*(\bx_{-d})|\le \epsilon_0$.
		Consider $G\triangle G^*$ in dimension $x_d$ and $\bx_{-d}$ separately and write $G\triangle G^*=\rbr{(G\triangle G^*)_{-d},(G\triangle G^*)_d}$. Then
		\begin{align*}
			d_\triangle(G,G^*) &= \int_{(G\triangle G^*)_{-d}}\int_{(G\triangle G^*)_d}dx_dd\bx_{-d}\\
			&=\int_{(G\triangle G^*)_{-d}}\delta(\bx_{-d})d\bx_{-d}
		\end{align*}
		Applying assumption (M1) and Jensen's inequality yields
		\begin{align*}
			d_{p,q}(G,G^*) &= \int_{G\triangle G^*} |p(\bx)-q(\bx)|d\bx\\
			&= 
			\int_{(G\triangle G^*)_{-d}}\int_{0}^{\delta({\bx_{-d}})} m_{\bx}(t)dt d\bx_{-d}\\
			&\ge 
			\int_{(G\triangle G^*)_{-d}}\int_{0}^{\delta({\bx_{-d}})} \frac{1}{C_{\epsilon_0}} t^{1/\kappa^-}dt d\bx_{-d}\\
			& \ge 
			\frac{1}{C_{\epsilon_0}(1+1/\kappa^-)}\int_{(G\triangle G^*)_{-d}}{\delta({\bx_{-d}})^{\frac{\kappa^- +1}{\kappa^-}}}d\bx_{-d}\\
			& \ge
			\frac{1}{C_{\epsilon_0}(1+1/\kappa^-)} d_{\triangle}(G,G^*)^{\frac{\kappa^- +1}{\kappa^-}}
		\end{align*}
	\end{proof}
	
	\noindent\textit{Convergence rate proof road map:}
	As stated before, the excess risk can be decomposed into the stochastic (empirical) error and the approximation error. 
	As DNN classifiers get larger, the variance becomes larger while the bias gets smaller. 
	An optimal trade-off can be achieved by carefully choosing the size of the DNN function space, or equally, the approximation error.  
	The better the characterization of the two types of error, the tighter the convergence upper bound. 
	In the following, we investigate these two aspects separately.

	\subsection{Approximation error}
	DNNs are universal approximators \citep{cybenko1989approximations}. 
	Approximating the smooth function $f^*\in\cH(d, \beta)$ in (\ref{eqn:boundary_fragment}) using a DNN family $\tilde{\cF}_n$, has been well-established in literature. Note that here we use $\tilde{\cF}$ to distinguish from the overall DNN classifier family $\cF$. 
	Among others \citep{yarotsky2017error, kohler2019rate}, 
	\cite{lu2020deep} show that if $\tilde{\cF}_n$ is large enough with depth $\tilde{L}=O(L\log(L))$ and width $\tilde{N}=O(N\log(N))$, the $L_\infty$-approximation error is in the order of $O(N^{-2\beta/d}L^{-2\beta/d})$; 
	\cite{schmidt2020nonparametric} proves that for any $\epsilon>0$, there exists a neural network $\tilde{f}_n\in\tilde{\cF}_n$ with $\tilde{L}_n = O(\log n)$ layers and
	$\tilde{S}_n = O(\epsilon^{-d/\beta}\log n)$ non-zero weights such that
	$\|\tilde{f}_n - f^*\|_\infty \le \epsilon$. The detailed theorems are listed below and they cover all the cases considered in this paper.
	It's worth emphasizing that approximation of smooth functions is not the focus of this work. We aim to better utilize existing approximation results to sharpen the bias bound of the excess risk.
	\begin{lemma}[Theorem 1.1 in \citep{lu2020deep}]
		\label{lemma:approximation}
		Given a smooth function $f_0\in  \cH(d, \beta)$ with $\beta\in \NN^+$, for any $N,L\in \NN^+$, there exists a function $f$ implemented by a ReLU DNN with width $ C_1(N+2)\log_2(8N)$ and depth $C_2(L+2)\log_2(4L)+2d$ such that 
		\begin{equation*}
			\|f-f_0\|_{L^\infty([0,1]^d)}\le C_3 \|f\|_{C^s([0, 1]^d)} N^{-2s/d}L^{-2s/d},
		\end{equation*}
		where $C_1=17s^{d+1}3^dd$, $C_2=18s^2$, and $C_3=85(s+1)^d8^s$.
	\end{lemma}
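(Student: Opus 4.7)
The plan is to build a ReLU network approximation via local Taylor polynomials on a fine partition of $[0,1]^d$, combined with a bit-extraction trick to jointly exploit the available width $N$ and depth $L$. Throughout, $s$ plays the role of the integer smoothness $\beta$.

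First I would tile $[0,1]^d$ into $K^d$ axis-aligned sub-cubes, with $K$ to be tuned later; the right choice turns out to be $K \asymp (NL)^{2/d}$. On the cube centered at grid point $\bx_\alpha$, approximate $f_0$ by its degree-$(s-1)$ Taylor polynomial $P_\alpha(\bx)$. By the $C^s$ bound on the top derivatives, the pointwise Taylor error is $O(\|f_0\|_{C^s} K^{-s})$, which gives $O(\|f_0\|_{C^s}(NL)^{-2s/d})$ and matches the target rate.

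Next I would realize the piecewise polynomial map $\bx \mapsto P_{\alpha(\bx)}(\bx)$ by a ReLU network with three sub-modules: (a) a cube-locator that extracts the multi-index $\alpha(\bx)$ using ReLU thresholds together with $\min/\max$ gates built from $\sigma$; (b) a coefficient retriever that, given $\alpha$, outputs the $\binom{s+d-1}{d}$ Taylor coefficients stored for that cube; (c) a polynomial evaluator that computes $P_\alpha(\bx)$ from those coefficients and the shifted coordinates via repeated approximate multiplications, where each multiplication is assembled from Yarotsky's ReLU approximation of $t \mapsto t^2$ using the identity $uv = \tfrac{1}{2}((u+v)^2-u^2-v^2)$. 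Modules (a) and (c) each fit within width $O(N \log N)$ and depth $O(L \log L)$ with the usual poly-in-$d$ and $s$ prefactors, matching the claimed $C_1, C_2$.

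The crux --- and what I expect to be the main obstacle --- is implementing module (b) within width $\tilde O(N)$ and depth $\tilde O(L)$, because there are $K^d \asymp (NL)^2$ cubes and a naive table lookup would blow up the network size by a factor of $(NL)^2$. This is resolved by \emph{bit extraction}: one packs all the (suitably quantized) Taylor coefficients into a short vector of real numbers whose binary expansions contain the entire table, then uses a small ReLU sub-network to isolate a specified bit block via iterated doublings and thresholdings of the form $\sigma(2t-1)$. The key capacity statement is that a ReLU network of width $O(N)$ and depth $O(L)$ can read out any one of roughly $N^2 L^2$ stored values, which is exactly the number of sub-cubes and is the origin of the doubled exponent $2s/d$ rather than $s/d$.

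Finally I would combine the three error contributions --- the Taylor remainder $O((NL)^{-2s/d})$, the coefficient-quantization error (made negligible by taking enough bits, which costs only a logarithmic factor already absorbed into the depth $L\log L$), and the multiplication error from the $t^2$ modules (also negligible at depth $O(L\log L)$) --- into the single bound $C_3\|f_0\|_{C^s}N^{-2s/d}L^{-2s/d}$. The constants $C_1, C_2, C_3$ then come from counting monomials of degree $\le s-1$ in $d$ variables, the widths required for the cube-locator, and the overhead of the bit-extraction circuit, yielding the stated $C_1 = 17 s^{d+1} 3^d d$, $C_2 = 18 s^2$, and $C_3 = 85 (s+1)^d 8^s$.
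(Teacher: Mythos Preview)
The paper does not prove this lemma at all: it is stated as Theorem~1.1 of \cite{lu2020deep} and the paper explicitly defers to that reference, remarking only that ``detailed descriptions and proofs can be found in the referenced papers.'' Your sketch is essentially the proof strategy of \cite{lu2020deep} --- local Taylor polynomials on a $K^d$ grid with $K\asymp (NL)^{2/d}$, Yarotsky-style product networks, and the bit-extraction subroutine that lets a width-$O(N)$, depth-$O(L)$ ReLU network index into $\asymp N^2L^2$ stored coefficients --- so there is no genuine disagreement, only that you have supplied an outline where the paper simply cites.
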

	\begin{lemma}[Approximation Part of Theorem 1 in \citet{schmidt2020nonparametric}]
		\label{smooth}
		Consider the $d$-variate nonparametric regression model for composite regression function $f_0$ in the class $\cH(l, \bd, \bt, \bbeta, R).$ There exists $\tilde{f}_n$ in the network class $\cF_n^{\dnn}(L_n, N_n, S_n, B_n, F_n)$ with $L_n\lesssim \log_2 n$, 
		$B_n=1$,
		$F_n \geq \max(R,1)$, 
		$$N_n\lesssim \max_{i=0,\cdots,q}n^{\frac{t_i}{2\bar{\beta}_i+t_i}}, \quad S_n \lesssim \max_{i=0,\cdots,q}n^{\frac{t_i}{2\bar{\beta}_i+t_i}} \log n.$$    
		such that 
		\begin{align*}
			\|\hat{f}_n-f_0\|_\infty\lesssim n^{-{\beta^{*}}/({2\beta^{*}+d^*})}.
		\end{align*}
	\end{lemma}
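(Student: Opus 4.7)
The plan is to build $\tilde f_n$ by approximating each component $h_{ij}$ separately with a ReLU sub-network and then concatenating these sub-networks according to the composition pattern of $f_0 = h_l \circ \cdots \circ h_0$. The workhorse is the classical result that any $\beta_i$-H\"older function $g:[a,b]^{t_i}\to\RR$ admits an $L^\infty$-approximation to accuracy $\varepsilon$ by a ReLU network of depth $O(\log(1/\varepsilon))$ and $O(\varepsilon^{-t_i/\beta_i}\log(1/\varepsilon))$ nonzero weights. I would obtain this in the usual two stages: first cover the domain by a regular grid of side length $\varepsilon^{1/\beta_i}$ and locally expand $g$ in a Taylor polynomial of degree $\lfloor\beta_i\rfloor$; then implement each monomial via the standard ReLU ``sawtooth'' product network (whose error decays exponentially in depth) and patch local polynomials together using a ReLU partition of unity supported on the grid cells. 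A final pair of ReLU units clips each sub-network's output to $[a_{i+1},b_{i+1}]^{d_{i+1}}$ so that inputs to the next layer always lie in the correct domain.

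The main technical content, and what I expect to be the principal obstacle, is controlling how the approximation error propagates through the composition. If $\|\tilde h_i - h_i\|_\infty \le \varepsilon_i$ on $[a_i,b_i]^{d_i}$, then the error at the output of layer $i+1$ is bounded by $\varepsilon_{i+1} + \|h_{i+1}(\tilde h_i(\cdot)) - h_{i+1}(h_i(\cdot))\|_\infty$. For a $\beta_{i+1}$-H\"older function the second term is bounded by a constant times $\varepsilon_i^{\beta_{i+1}\wedge 1}$: for $\beta_{i+1}\le 1$ one invokes the H\"older condition directly, while for $\beta_{i+1}>1$ the function is Lipschitz by the mean-value theorem applied componentwise (giving exponent $1 = \beta_{i+1}\wedge 1$). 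Iterating this bound from layer $i$ through layer $l$ produces a total contribution of order $\varepsilon_i^{\bar\beta_i/\beta_i}$, which is exactly how the effective smoothness $\bar\beta_i=\beta_i\prod_{j=i+1}^l(\beta_j\wedge 1)$ enters the final error bound.

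With the propagation lemma in hand, the last step is a balancing argument. Choosing $\varepsilon_i$ so that $\varepsilon_i^{\bar\beta_i/\beta_i}\asymp n^{-\bar\beta_i/(2\bar\beta_i+t_i)}$ equalizes each layer's contribution with its local nonparametric rate, so the overall error is $\max_i n^{-\bar\beta_i/(2\bar\beta_i+t_i)} = n^{-\beta^{*}/(2\beta^{*}+d^*)}$ by the definition of $i^*$. Summing the sub-network budgets then gives depth $L_n = O(\log n)$ and total weights $S_n \lesssim \max_i n^{t_i/(2\bar\beta_i+t_i)}\log n$, matching the stated architecture. The two delicate points to watch are (i) ensuring each sub-network clips its output into the valid input domain of the next so that the input-range hypothesis of the H\"older bound is not violated, and (ii) verifying that the constants from Taylor remainders, the product network, and the partition-of-unity patching assemble into a universal constant independent of $n$. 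Both are routine but easy to mishandle, which is why it is efficient to invoke Schmidt-Hieber's construction wholesale in a paper whose focus is on boundary recovery rather than function approximation per se.
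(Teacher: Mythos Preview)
Your proposal is correct and matches the approach of the cited source. Note that the paper itself does not give a proof of this lemma: it is stated verbatim as the approximation part of Theorem~1 in \citet{schmidt2020nonparametric}, with the paper remarking that ``detailed descriptions and proofs can be found in the referenced papers.'' Your sketch---local Taylor approximation plus partition of unity for each $h_{ij}$, ReLU product sub-networks for monomials, output clipping, and the H\"older error-propagation bound yielding the effective smoothness $\bar\beta_i$---is exactly Schmidt-Hieber's construction, and you already correctly observe in your final sentence that the present paper invokes that construction wholesale rather than reproving it.
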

	Notice that the approximation error bounds in the above lemmas are almost rate-optimal. Detailed descriptions and proofs can be found in the referenced papers. 
	Other neural network approximation results can also be potentially used in our localized analysis as long as the approximation rate is optimal up to a logarithmic term. 
	
	Two types of smooth functions are considered, one is general H\"older smooth function and the other is compositional smooth function. 
	Since the former is a special case of the latter, most of the proofs are done on the more general compositional smooth setting (\ref{comp}).

	\subsection{Stochastic error}
	The techniques for controlling the stochastic error are from literature on \textit{empirical process} where covering/bracketing number/entropy are widely used to upper bound the empirical error. More details can be found in \cite{sara2000, kosorok2007}.
	
	However, there are difficulties when it comes to DNNs. A subtle difference in analyzing neural networks lies in their complexity measurement. To be more specific, for some real-value function space $\cF$, let the covering entropy be $H(\delta,\cF, L_\infty)$. The typical entropy bound is of the form $A\delta^{-\rho}$ where $A,\rho$ are some constants, e.g., for $d$-variant $\beta$-smooth functions, $\rho = d/\beta$. 
	In comparison, that for DNNs is of the form $A\log(1/\delta)$ where $A=O(N^2L^2)$, which diverges with network size. 
	\begin{lemma}[Lemma 5 of \cite{schmidt2020nonparametric}; Lemma 3 of \cite{suzuki2018adaptivity}]
		\label{lemma:entropy}
		For any $\delta>0$, a ReLU network family $\cF$ satisfies
		\begin{align*}
			\log \cN(\delta, \cF(L, N, S, B), \|\cdot\|_{\infty}) 
			\le 2L(S+1)\log(\delta^{-1}(L+1)(N+1)(B\vee 1)).
		\end{align*}
	\end{lemma}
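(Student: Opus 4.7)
The plan is to reduce a function-space covering for $\cF = \cF(L,N,S,B)$ to a parameter-space covering, exploiting the fact that a ReLU network is Lipschitz in its weights with a tractable constant. First, I would fix a sparsity pattern (which of the at most $S$ weights are nonzero) and parameterize the remaining weights by a vector $\theta$ living in $[-B,B]^S$. Writing the network as a composition of affine maps $W_\ell \mathbf{h} + \mathbf{b}_\ell$ followed by the ReLU nonlinearity $\sigma$, and assuming inputs lie in $[0,1]^d$, an easy induction on depth shows that every intermediate activation is uniformly bounded in terms of $(N+1)(B\vee 1)$ per layer.

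Next, for two parameter vectors $\theta, \theta'$ sharing the same sparsity pattern, I would bound $\|f_\theta - f_{\theta'}\|_\infty$ by a telescoping argument layer by layer. At each layer the output difference decomposes as ``perturbation of the current weight matrix times the old activation'' plus ``the old weight matrix times the activation difference from the previous layer,'' and $1$-Lipschitzness of $\sigma$ preserves inequalities through the ReLU. Iterating this layer by layer yields the key Lipschitz-in-parameters estimate
\[
\|f_\theta - f_{\theta'}\|_\infty \;\le\; L(N+1)^L (B\vee 1)^{L-1}\, \|\theta - \theta'\|_\infty.
\]
Consequently, an $\epsilon$-net for $[-B,B]^S$ in the $\ell_\infty$ norm produces a $\delta$-net for $\cF$ with $\delta = L(N+1)^L(B\vee 1)^{L-1}\epsilon$. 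The size of such a uniform grid is at most $(2B/\epsilon + 1)^S$, and one takes a union over the finitely many sparsity patterns — a count whose logarithm is of order $S \log(NL)$ and thus is absorbed into the main bound.

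Taking logarithms and choosing $\epsilon$ so the induced $\delta$ matches the target, the estimate collapses to
\[
\log \cN(\delta, \cF, \|\cdot\|_\infty) \;\lesssim\; L(S+1) \log\!\Big(\frac{(L+1)(N+1)(B\vee 1)}{\delta}\Big),
\]
which matches the stated lemma up to absorbing lower-order log terms into the leading constant $2$. The main obstacle is obtaining the tight Lipschitz-in-parameters constant: a naive recursion picks up extra factors of $B$ or $N$ that blow up the right-hand side geometrically in $L$. The clean fix is the two-term split at each layer above, together with the choice of $(B\vee 1)$ rather than $B$ as the activation scale (so that weights smaller than one do not artificially shrink the bound and break monotonicity of the induction). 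Once that estimate is in hand, the remaining bookkeeping — grid counting, sparsity-pattern union bound, and simplifying logarithms — is routine.
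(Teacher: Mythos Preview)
The paper does not supply its own proof of this lemma; it is quoted directly from \cite{schmidt2020nonparametric} (Lemma~5) and \cite{suzuki2018adaptivity} (Lemma~3), so there is nothing in the present paper to compare against. Your outline --- a layerwise telescoping bound to get Lipschitz continuity of $f_\theta$ in the parameters, followed by an $\ell_\infty$-grid on $[-B,B]^S$ and a union over sparsity patterns --- is exactly the argument in those cited references, and the Lipschitz constant and final entropy bound you derive match theirs, so the proposal is correct.
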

	
	The difference makes existing results not directly applicable and non-trivial modifications have to be made. Below, we prove some empirical error bounds specifically for DNNs. 
	Following the notations from \cite{sara2000} and \cite{tsybakov2004optimal},
	let 
	$$v_n(h)=\sqrt{n}\int h(\bx) d(P_n-P),$$
	where $P$ denotes the data distribution, i.e. $\bx\sim P$ and $P_n$ denotes the empirical distribution of $\bx_1,\cdots,\bx_n$. 
	\begin{lemma}
		[Theorem 5.11 in \cite{sara2000}]
		\label{lemma:511}
		For some function space $\cH$ with $\sup_{h\in\cH}\|h(\bx)\|_\infty\le K$ and $\sup_{h\in\cH}\|h(\bx)\|_{L_2(P)}\le R$.
		If $a>0$ satisfies: 
		(1) $a\le C_1\sqrt{n}R^2/K$; 
		(2) $a\le 8\sqrt{n}R$;
		\[
		(3)\quad a\ge C_0\left(\int_{a/64\sqrt{n}}^R H_B^{1/2}(u,\cF,L_2(P))du\vee R\right);
		\]
		and (4) $C_0^2\ge C^2(C_1+1)$. Then 
		\[
		\PP\left(\sup_{h\in\cH}\abr{\sqrt{n}\int hd(P_n-P)}\ge a\right)\le C\exp\rbr{-\frac{a^2}{C^2(C_1+1)R^2}},
		\]
		where $P_n$ is the empirical counterpart of $P$.
	\end{lemma}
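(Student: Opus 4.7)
The plan is to prove this uniform concentration bound for the empirical process indexed by $\cH$ using the standard machinery of bracketing entropy, chaining, and Bernstein's inequality. The setup is classical: build a sequence of bracketing approximations of $\cH$ at geometrically decreasing $L_2(P)$ scales, decompose each $h \in \cH$ telescopically along this chain, apply a sharp one-sided concentration inequality at every level, and finally take a union bound whose size is controlled by the bracketing entropy at each scale.

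Concretely, I would first define scales $\delta_k = R \cdot 2^{-k}$ for $k = 0, 1, \ldots$ and stop the chain at the level $k^*$ with $\delta_{k^*} \asymp a/(64\sqrt{n})$; at each scale $k$ take an $L_2(P)$-bracketing cover of $\cH$ with brackets of size $\delta_k$ and cardinality $N_k = \exp(H_B(\delta_k,\cH,L_2(P)))$, and let $h_k$ be the selected upper envelope of $h$ at level $k$. For each $h\in\cH$, telescope
\[
v_n(h) = v_n(h_0) + \sum_{k=1}^{k^*} v_n(h_k - h_{k-1}) + v_n(h - h_{k^*}),
\]
and bound the last remainder deterministically using the bracket size at level $k^*$ (which is why the integral in hypothesis (3) is cut off at $a/(64\sqrt{n})$). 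For each fixed increment $g = h_k - h_{k-1}$ one has $\|g\|_\infty \le 2K$ and $\|g\|_{L_2(P)} \le 3\delta_{k-1}$, and Bernstein's inequality applied to the bounded mean-zero summands gives
\[
\PP\bigl(|v_n(g)| \ge t_k\bigr) \le 2\exp\left(-\frac{c\, t_k^2}{\delta_{k-1}^2 + (K/\sqrt{n})\, t_k}\right)
\]
for a universal constant $c>0$. I would then choose $t_k$ so that the exponent dominates $H_{k-1} + H_k + x$ for a free parameter $x$ governing the final tail, forcing $t_k \asymp \delta_{k-1}\sqrt{H_k + x} + (K/\sqrt{n})(H_k + x)$.

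A union bound over the at most $N_k N_{k-1}$ bracket pairs at each level, summed across $k=0,\ldots,k^*$, separates into a subgaussian piece and a subexponential piece. The subgaussian piece is, up to constants, the Dudley-type integral
\[
\sum_k \delta_{k-1}\sqrt{H_k} \;\lesssim\; \int_{a/(64\sqrt{n})}^{R} H_B^{1/2}(u,\cH,L_2(P))\, du,
\]
which by hypothesis (3) is dominated by $a/C_0$. The subexponential piece contributes at most $(K/\sqrt{n})\sum_k H_k$; hypothesis (1) $a \le C_1\sqrt{n}R^2/K$ is exactly what makes this absorbable into the subgaussian piece at the price of the factor $C_1+1$ in the denominator of the final exponent, while hypothesis (2) $a \le 8\sqrt{n}R$ keeps us in the Bernstein regime where the variance term (not the boundedness term) drives the tail. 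Hypothesis (4) $C_0^2 \ge C^2(C_1+1)$ then pins down the overall numerical constant so that the aggregated tail is at most $C\exp(-a^2/(C^2(C_1+1)R^2))$.

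The main obstacle I expect is the bookkeeping in the chaining step: one must allocate the free parameter $x$ across the infinitely many levels so that the union-bounded tails combine into a single clean bound of the advertised form, and simultaneously verify that hypotheses (1)--(4) are exactly what is required for the subexponential correction in Bernstein's inequality to be absorbed into the subgaussian term at every scale rather than just at the dominant one. A secondary technical point is measurability of $\sup_{h\in\cH}|v_n(h)|$, which is handled by the standard reduction to a countable $L_2(P)$-dense subset of $\cH$ that inherits the hypotheses.
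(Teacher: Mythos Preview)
Your proof sketch is the standard bracketing-chaining-Bernstein argument and is essentially how the result is established in van de Geer's book. However, the paper does not supply its own proof of this lemma at all: the label ``[Theorem 5.11 in \cite{sara2000}]'' signals that the statement is quoted verbatim from the reference and used as a black box in the subsequent proof of Lemma~\ref{lemma:geer}. So there is no proof in the paper to compare your proposal against; the paper's ``approach'' is simply to cite the result. Your outline is correct as a reconstruction of the original argument, and the role you identify for each of the hypotheses (1)--(4) matches their function in the cited theorem.
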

	
	The next lemma investigates the modulus of continuity of the empirical process. It's similar to Lemma 5.13 in \cite{sara2000} but with a key difference in the entropy assumption (\ref{eqn:hb}), where the entropy bound contains $n$. 
	\begin{lemma}
		\label{lemma:geer}
		For a probability measure $P$, let $\cH_n$ be a class of uniformly bounded (by 1) functions $h$ in $L_2(P)$ depending on $n$. Suppose that the $\delta$-entropy with bracketing satisfies for all $0<\delta<1$ small enough,
		the inequality
		\begin{align}
			\label{eqn:hb}
			H_B(\delta,\cH_n, L_2(P))\le A_n\log (1/\delta),
		\end{align}
		where $0<A_n=o(n)$.
		Let $h_{0n}$ be a fixed element in $\cH_n$.
		Let $\cH_n(\delta) = \{h_n\in\cH_n: \|h_n-h_{0n}\|_{L_2(P)}\le \delta\}$.
		Then there exist constants $D_1>0, D_2>0$ such that for a sequence of i.i.d. random variables $\bx_1,\cdots,\bx_n$ with probability distribution $P$, it holds that for all $T$ large enough,
		\begin{align*}
			&\PP\left(\sup_{h_n\in\cH_n(\sqrt{{A_n}/{n}})}\abr{\int (h_n-h_{0n})d(P_n-P)}\ge T{\frac{A_n}{{n}}}\right)\\
			&\le C\exp\left(-\frac{TA_n}{8C^2}\right)
		\end{align*}
		and for $n$ large enough,
		\begin{align*}
			& \PP\left(\sup_{\substack{h_n\in\cH_n;\\ \|h_n-h_{0n}\|> \sqrt{{A_n}/{n}}}}\frac{|v_n(h_n)-v_n(h_{0n})|}{A_n^{1/2}\|h_n-h_{n0}\|}>D_1 x\right)\\
			&\le D_2 e^{-A_n x}
		\end{align*}
		for all $x\ge 1$. 
	\end{lemma}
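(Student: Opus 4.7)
The plan is to derive both inequalities from Lemma \ref{lemma:511} applied to the centered class $\{h_n - h_{0n} : h_n \in \cH_n\}$, which is uniformly bounded (take $K=2$) and inherits the bracketing-entropy bound (\ref{eqn:hb}). The only genuine difference from the classical Lemma~5.13 in \cite{sara2000} is that the entropy constant $A_n$ grows with $n$, so every parameter in Lemma \ref{lemma:511} must be tracked as a function of $A_n$, $n$, and $T$ (or $x$); the work is essentially careful bookkeeping rather than a new idea.

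For the first inequality, set $R = \sqrt{A_n/n}$ and $a = TA_n/\sqrt{n}$, so that $|\int (h_n - h_{0n})\,d(P_n - P)| \ge TA_n/n$ translates into $|v_n(h_n - h_{0n})| \ge a$. To match the target exponent $-TA_n/(8C^2)$, I would take $C_1 = 8T - 1$ and $C_0 = C\sqrt{8T}$, which saturates condition (4) and gives $a^2/\{C^2(C_1+1)R^2\} = TA_n/(8C^2)$. Conditions (1) and (2) of Lemma \ref{lemma:511} reduce to $T \ge 1/6$ and $T \le 8\sqrt{n/A_n}$, both of which hold since $A_n = o(n)$. Condition (3) requires controlling the entropy integral, which under (\ref{eqn:hb}) satisfies
\[
\int_{a/(64\sqrt n)}^{R} H_B^{1/2}(u,\cH_n,L_2(P))\,du \;\le\; \sqrt{A_n}\int_0^R \sqrt{\log(1/u)}\,du \;=\; O\bigl(R\sqrt{A_n \log(n/A_n)}\bigr);
\]
comparing with $a/C_0 = T A_n/(C\sqrt{8Tn})$ shows that (3) holds provided $T \gtrsim \log(n/A_n)$, which is the content of ``$T$ large enough'' in the statement. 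Lemma \ref{lemma:511} then yields the tail bound $C\exp(-TA_n/(8C^2))$.

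The second inequality follows by a standard peeling argument. Decompose $\{h_n \in \cH_n : \|h_n - h_{0n}\| > \sqrt{A_n/n}\}$ into shells $S_j = \{h_n : 2^{j-1}\sqrt{A_n/n} < \|h_n - h_{0n}\|_{L_2(P)} \le 2^j\sqrt{A_n/n}\}$ for $j = 1, \ldots, J$ with $J = O(\log n)$, using the uniform bound $\|h_n\|_\infty \le 1$. On $S_j$, the event $|v_n(h_n) - v_n(h_{0n})|/(A_n^{1/2}\|h_n - h_{0n}\|) > D_1 x$ implies $|v_n(h_n) - v_n(h_{0n})| > D_1 x \cdot 2^{j-1} A_n/\sqrt{n}$. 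Applying the first-part argument with $R_j = 2^j\sqrt{A_n/n}$ and $a_j = D_1 x \cdot 2^{j-1} A_n/\sqrt{n}$ gives the $j$-independent ratio $a_j^2/R_j^2 = D_1^2 x^2 A_n/4$, producing a shell-wise tail bound of order $\exp(-c D_1^2 x^2 A_n)$. A union bound over the $O(\log n)$ shells, together with $x \ge 1$ (so $x^2 \ge x$) and $D_1$ chosen large enough to dominate the $\log n$ term, yields the claimed $D_2 e^{-A_n x}$.

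The main obstacle is the joint calibration of $C_0, C_1, C$ together with the entropy integral: because $H_B^{1/2}(u)$ contains the factor $\sqrt{\log(1/u)}$, the integral unavoidably picks up a $\sqrt{\log(n/A_n)}$ that must be absorbed either by taking $T$ sufficiently large (for the first part) or by inflating $D_1$ (for the second part), and the latter in turn constrains the admissible range of $x$ once $C_1$ in Lemma \ref{lemma:511} is fixed. Tying together the threshold on $T$, the choice of $D_1$, and the scaling of $C_1$ with $x$ through the common normalization $a_j/R_j = D_1 x A_n^{1/2}/2$ is the only nontrivial step; once this is handled, both conclusions reduce to a single invocation of Lemma \ref{lemma:511} per shell followed by a union bound.
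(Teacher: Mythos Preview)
Your proposal is correct and follows essentially the same route as the paper: apply Lemma~\ref{lemma:511} to the centered class at a given radius, then peel over dyadic shells for the second inequality. The paper's proof takes $K=4$, $R=\sqrt{2}\delta$, $a=\tfrac{C_1}{2}A_n^{1/2}\delta$ with $C_1=2\sqrt{2}C_0$, obtains a single bound valid for all $\delta\ge\sqrt{A_n/n}$, specializes to $\delta=\sqrt{A_n/n}$ for the first claim, and peels over $\delta=2^{-b}$ (from $1$ down to $\sqrt{A_n/n}$) for the second. Your parameterization (letting $C_1$ scale with $T$ or $x$, and peeling upward in $2^{j}\sqrt{A_n/n}$) is an equivalent reindexing.

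One substantive difference: you are more explicit than the paper about the entropy integral. The paper asserts that condition~(3) of Lemma~\ref{lemma:511} is ``trivially satisfied when $n$ is large enough'' with \emph{fixed} $C_0,C_1$, whereas you correctly observe that the $\sqrt{\log(1/u)}$ integrand forces the threshold on $T$ (resp.\ the choice of $D_1$) to absorb a $\sqrt{\log(n/A_n)}$ factor. Your reading of ``$T$ large enough'' as $T\gtrsim\log(n/A_n)$ is the honest bookkeeping; this does not change the downstream $O_{\PP}$ conclusions in Theorem~\ref{thm:local_estimator}, which is all the lemma is used for. Apart from this and cosmetic choices ($K=2$ versus $K=4$, shell direction), the two arguments coincide.
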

	\begin{proof}
		The main tool for the proof is Lemma \ref{lemma:511}. Replace $\cH$ with $\cH_n(\delta)$ in Lemma \ref{lemma:511} and take $K=4$, $R = \sqrt{2}\delta$ and 
		$a = \frac{1}{2}C_1A_n^{1/2}\delta$, with $C_1 = 2\sqrt{2}C_0$. 
		Then (1) is satisfied if 
		\begin{equation}
			\label{eqn:an}
			\delta\ge \sqrt{\frac{A_n}{n}},
		\end{equation} 
		under which, (2) and (3) is trivially satisfied when $n$ is large enough. 
		Choosing $C_0$ sufficiently large will ensure (4). Thus, for all $\delta$ satisfying (\ref{eqn:an}), we have
		\begin{align*}
			&\PP\left(\sup_{h_n\in\cH_n(\delta)}\abr{\sqrt{n}\int (h_n-h_{0n})d(P_n-P)}\ge \frac{C_1}{2}{A_n}^{1/2}\delta\right)\\
			&\le C\exp\left(-\frac{C_1A_n}{16C^2}\right)
		\end{align*}
		Let $B = \min\{b>1: 2^{-b}\le \sqrt{{A_n}/{n}}\}$ and apply the peeling device. Then,
		\begin{align*}
			&\PP\rbr{\sup_{\substack{h_n\in\cH_n;\\ \|h_n-h_{n0}\|> \sqrt{{A_n}/{n}}}}\frac{\abr{\sqrt{n}\int (h_n-h_{n0})d(P_n-P)}}{A_n^{1/2}\|h_n-h_{n0}\|}\ge \frac{C_1}{2}}\\
			&\le\sum_{b=0}^B\PP\rbr{\sup_{h_n\in\cH_n(2^{-b})}{\abr{\sqrt{n}\int (h_n-h_{n0})d(P_n-P)}}\ge \frac{C_1}{2}{A_n}^{1/2}(2^{-b}) }\\
			&\le\sum_{b=0}^B C\exp\left(-\frac{C_1A_n }{16C^2}\right)\le 2C(\log n)\exp\rbr{-\frac{C_1A_n}{16C^2}},
		\end{align*}
		if $C_1A_n$ is sufficiently large.
	\end{proof}
	
	Now that we have introduced necessary tools to analyze both the approximation error and the stochastic error, we continue to prove the main theorems in this work.

	\subsection{Localized convergence analysis}
	\label{sec:rate_local}
	Let $\rho=\frac{d-1}{\beta}$ in the regular smooth case and $\rho^*=\frac{d^*}{\beta^{*}}$ for the compositional smooth case, which denote the complexity measurements for the target function space.  
	
	\noindent\textit{Proof of Theorem \ref{thm:local_estimator}}
	
	\begin{proof}
		For ease of notation, we will write $G_f$ and its defining function $f$ interchangeably. 
		For any $\epsilon>0$, by construction, we can find $\tilde{f}_n\in\tilde{\cF}_n$ such that 
		$\|\tilde{f}_n - f^*\|_\infty \le \epsilon$. Within $D_{\bj_{-d}}$, the 0-1 loss can be bounded as  
		\begin{align*}
			d_{\bj_{-d}}(\tilde{f}_{n},f^*)&=\int_{D_{\bj_{-d}}:G_{\tilde{f}_{n,{\bj_{-d}}}}\triangle G_{f^*}}|p(\bx)-q(\bx)|d\bx\\
			&\le \int_{D_{\bj_{-d}}}\int_0^{\epsilon} m_{\bx}(t)dt d\bx_{-d}\\
			&\le C_{\epsilon_0}\int_{D_{\bj_{-d}}}\int_0^{\epsilon} t^{1/K(\bx)}dt d\bx_{-d}\\
			& \le {\frac{C_{\epsilon_0}}{M^{d-1}(1+1/\kappa^+)}\epsilon^{\frac{\kappa^+ +1}{\kappa^+}}}
		\end{align*}
		Since $\hat{f}_{n, {\bj_{-d}}}$ is the empirical risk minimizer within $\tilde{\cF}_n$, we have $R_{n,{\bj_{-d}}}(\hat{f}_{n, {\bj_{-d}}})\le R_{n,{\bj_{-d}}}({\tilde{f}_{n}})$.
		Therefore,
		\begin{align*}
			d_{{\bj_{-d}}}(\hat{f}_{n, {\bj_{-d}}},f^*) 
			\le& 
			d_{{\bj_{-d}}}(\tilde{f}_{n},f^*) + [R_{n,{\bj_{-d}}}(\tilde{f}_{n})-R_{n,{\bj_{-d}}}(f^*)-d_{\bj_{-d}}(\tilde{f}_{n}, f^*)]\\
			& +
			[R_{n,{\bj_{-d}}}(f^*)-R_{n,{\bj_{-d}}}(\hat{f}_{n,{\bj_{-d}}})+d_{\bj_{-d}}(\hat{f}_{n, {\bj_{-d}}}, f^*)]\\
			:\le &
			\frac{C_{\epsilon_0}}{M^{d-1}(1+1/\kappa^+)}\epsilon^{\frac{\kappa^+ +1}{\kappa^+}} +
			I(\tilde{f}_{n}, f^*) + I(\hat{f}_{n, {\bj_{-d}}}, f^*).
		\end{align*}
		For $I(\tilde{f}_{n}, f^*)$, by Lemma \ref{lemma:geer}, we have
		\begin{align*}
			I(\tilde{f}_{n}, f^*)
			\le&  
			\sup_{\substack{f\in \tilde{\cF}_n: \|f-f^*\|_1 \\
					\le \sqrt{{A_n}/{n}}}}\abr{R_{n,{\bj_{-d}}}(f)-R_{n,{\bj_{-d}}}(f^*)-d_{\bj_{-d}}(f,f^*)}+\\ 
			& 
			\sqrt{\frac{A_n d_\triangle(\tilde{f}_{n}, f^*)}{{n}}}\sup_{\substack{f\in \tilde{\cF}_n: \|f-f^*\|_1 \\> \sqrt{{A_n}/{n}}}}\frac{\sqrt{n}\abr{R_{n,{\bj_{-d}}}(f)-R_{n,{\bj_{-d}}}(f^*)-d_{{\bj_{-d}}}(f,f^*)}}{\sqrt{A_n d_\triangle({f}, f^*)}}\\
			&= O_{\PP}\rbr{\frac{A_n}{{n}}} + \sqrt{\frac{A_n d_\triangle(\tilde{f}_{n}, f^*)}{{n}}} \ O_{\PP}(1),
		\end{align*}
		where $A_n$ is from the assumption (\ref{eqn:hb}).
		Similarly for $I(\hat{f}_{n, {\bj_{-d}}}, f^*)$, we have
		\begin{align*}
			I(\hat{f}_{n, {\bj_{-d}}}, f^*)
			&= O_{\PP}\rbr{\frac{A_n}{{n}}} + \sqrt{\frac{A_n d_\triangle(\hat{f}_{n,{\bj_{-d}}}, f^*)}{{n}}} \ O_{\PP}(1).
		\end{align*}
		By construction, $d_\triangle(\tilde{f}_{n}, f^*)\le \epsilon$. Hence
		\begin{align*}
			d_{{\bj_{-d}}}(\hat{f}_{n, {\bj_{-d}}},f^*) 
			\le &
			\frac{C_{\epsilon_0}}{M^{d-1}(1+1/\kappa^+)}\epsilon^{\frac{\kappa^+ +1}{\kappa^+}} +
			O_{\PP}\rbr{\frac{A_n}{{n}}} + \\
			& \sqrt{\frac{A_n \rbr{d_\triangle(\hat{f}_{n,{\bj_{-d}}}, f^*)+\epsilon}}{{n}}} \ O_{\PP}(1).
		\end{align*}
		The last term dominates the second term if $\epsilon\gg n^{-1/(1+\rho)}$. 
		Let $\epsilon\gtrsim n^{-\frac{1}{1+\rho+2/\kappa^+}}$. Then the first term $\epsilon^{\frac{\kappa^++1}{\kappa^+}}$ dominates the $\sqrt{A_n\epsilon/n}$ term in the last term.  
		Omitting the approximation error, i.e. $$\epsilon^{\frac{\kappa^+ +1}{\kappa^+}}\lesssim\sqrt{\frac{A_n}{{n}}}d^{1/2}_\triangle(\hat{f}_{n,{\bj_{-d}}}, f^*),$$ by Lemma \ref{lemma:d_ineq} we have
		\begin{align*}
			d_{{\bj_{-d}}}(\hat{f}_{n,{\bj_{-d}}}, f^*) &\le \sqrt{\frac{A_n}{{n}}}d^{1/2}_\triangle(\hat{f}_{n,{\bj_{-d}}}, f^*) \ O_{\PP}(1)\\
			&\le \sqrt{\frac{A_n}{{n}}} d_{{\bj_{-d}}}(\hat{f}_{n,{\bj_{-d}}}, f^*)^{\frac{\kappa^-}{2(\kappa^-+1)}}\ O_{\PP}(1),
		\end{align*}
		which simplifies to
		\begin{align*}
			d_{{\bj_{-d}}}(\hat{f}_{n,{\bj_{-d}}}, f^*) = O_{\PP}\rbr{{\frac{{A_n}}{n}}}^{\frac{\kappa^-+1}{\kappa^-+2}}.
		\end{align*}
		From Lemma \ref{lemma:entropy}, we know $A_n=O(\tilde{N}_n^2\tilde{L}_n^2)$. Combining with Lemma \ref{lemma:approximation} and Lemma \ref{smooth}, we know that
		$A_n = \tilde{O}(\epsilon^{-\rho})$ in both cases. 
		Balancing the approximation error and the empirical error, choose 
		\begin{equation}
			\label{eqn:epsilon_tilde}
			\epsilon \asymp  {n^{-\frac{\kappa^+(\kappa^-+1)}{(\kappa^-+2)(\kappa^+ +1)+\rho\kappa^+(\kappa^-+1)}}}.
		\end{equation}
		In this case, Lemma \ref{lemma:approximation} requires $\tilde{N}_n\tilde{L}_n\asymp \epsilon^{1/2}\log^2(n)$ and $A_n\asymp \epsilon^{-\rho}\log^4(n)$. Hence
		\begin{align*}
			\EE \ d_{\bj_{-d}}(\hat{f}_{n,{\bj_{-d}}},f^*) = {O}\rbr{{\frac{1}{n}}}^{\frac{\kappa^-+1}{\kappa^-+2+ \kappa^+\rho\rbr{\frac{\kappa^-+1}{\kappa^+ +1}}}}\cdot \log^4(n).
		\end{align*}
	\end{proof}
	
	\begin{remark}[Size of $\tilde{\cF}_n$]
		$\tilde{\cF}_n$ should have the capacity to approximate $f^*$ well such that the $L_\infty$-error within the local region can be bounded by $\epsilon$ as chosen in (\ref{eqn:epsilon_tilde}). DNN approximating smooth functions have been well-studied and the DNN structure is pretty flexible. For example, following
		\cite{schmidt2020nonparametric}, the DNN family should be large enough such that $\tilde{L}_n \asymp \log(n)$ and $\tilde{N}_n\asymp \epsilon^{-\rho/2}$. \cite{lu2020deep} provide a more general approximation result and the DNN family only have to satisfy $\tilde{N}_n\tilde{L}_n\asymp \epsilon^{-\rho/2}\log^2 n$. 
	\end{remark}

	\noindent\textit{Proof of Lemma \ref{lemma:thm1}}~
	\begin{proof}
		As illustrated in the proof of Lemma \ref{lemma:m1}, condition (N) and (N$^+$) essentially imply that $K(\bx)=\kappa$ along the entire decision boundary, which indicates $\kappa^-=\kappa^+$. Following the same proof technique of Theorem \ref{thm:local_estimator}, we can see that the optimal rate convergence rate can be achieved. 
	\end{proof}

	\begin{figure}[t]
		\centering
		\includegraphics[width=0.5\textwidth]{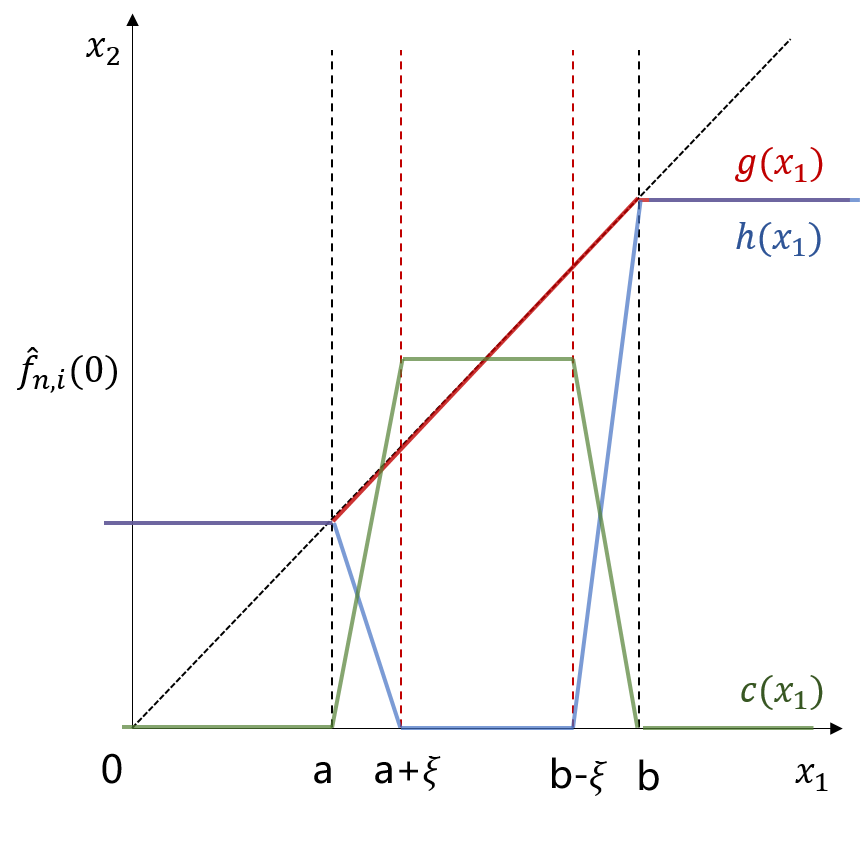}
		\caption{Illustration of the constructed functions $g,h,c$ in $d=2$ case.}
		\label{fig:ab}
	\end{figure}

	\subsection{Verification of (P1,P2,P3)}
	\label{sec:p13}
	Let's first consider the $d=2$ case and focus on some region $D_i=\{(x_1,x_2)\in [0,1]^2: x_1\in (a,b)\}$ with $b-a>2\xi$. Let $f_n\in\tilde{\cF}$ be any DNN. 
	Define three continuous piecewise linear functions 
	\[
	g(x_1) =
	\begin{cases}
		x_1 & \text{if $a\le x_1\le b$} \\
		a & \text{if $x_1<a$} \\
		b & \text{if $x_1>b$}
	\end{cases}, \quad
	c(x_1) =
	\begin{cases}
		{f}_{n}(0) & \text{if $a+\xi\le x_1\le b-\xi$} \\
		0 & \text{if $x_1<a$ or $x_1>b$} \\
		\text{linear transition} & \text{else}
	\end{cases}
	\]
	and 
	\[
	h(x_1) =
	\begin{cases}
		0 & \text{if $a+\xi\le x_1\le b-\xi$} \\
		a & \text{if $x_1<a$} \\
		b & \text{if $x_1>b$}\\
		\text{linear transition} & \text{else}
	\end{cases}
	\]
	Linear transition means linking the end points with a line segment.
	The constructed piecewise linear functions are illustrated in Figure \ref{fig:ab}. 
	Let ${f}^+_{n,i}(x_1) := {f}_{n}(g(x_1)) -{f}_{n}(h(x_1)) + c(x_1)$. Then, it's easy to verify that 
	\[
	{f}^+_{n,i}(x_1) =
	\begin{cases}
		{f}_{n}(x_1)& \text{if $a+\xi\le x_1\le b-\xi$} \\
		0 & \text{if $x_1<a$ or $x_1>b$} \\
		\text{piecewise linear} & \text{else}
	\end{cases}
	\]
	Therefore, (P1) and (P2) hold and now we evaluate (P3). 
	The constructed $g, h, c$ are all piecewise linear functions with at most 5 pieces. By Theorem 2.2 in \cite{arora2016understanding}, they can all be represented by two-layer ReLU neural networks with width at most 5. 
	${f}^+_{n,i}(x_1)$ is constructed by composition and addition of ReLU networks, which correspond to stacking more layers and expanding the width respectively. 
	Easy to see that ${f}^+_{n,i}(x_1)$ satisfies (P3).
	
	In the $d>2$ case, we can make similar constructions. Consider some region $D_{\bj_{-d}}$ and denote $D^\circ_{\bj_{-d}}:=D_{\bj_{-d}}\backslash D_\xi$. For each of the dimensions $x_1,\ldots, x_{d-1}$, we can define $g_i(x_i), h_i(x_i), c_i(x_i)$ separately as in the $d=2$ case.
	
	Let $g(\bx_{-d}) = (g_1(x_1),\ldots, g_{d-1}(x_{d-1}))$, $h(\bx_{-d}) = (h_1(x_1),\ldots, h_{d-1}(x_{d-1}))$, $c(\bx_{-d}) = (c_1(x_1),\ldots, c_{d-1}(x_{d-1}))$ and  
	${f}^+_{n,\bj_{-d}} = \rbr{{f}_{n}\circ g -{f}_{n}\circ h + c}$. Then, it's easy to verify that 
	\[
	{f}^+_{n,\bj_{-d}}(\bx_{-d}) =
	\begin{cases}
		{f}_{n}(\bx_{-d})& \text{if $\bx_{-d}\in D^\circ_{\bj_{-d}}$} \\
		0 & \text{if $\bx_{-d}\notin D_{\bj_{-d}}$} \\
		\text{piecewise linear} & \text{else}
	\end{cases}
	\]
	Thus, (P1) and (P2) hold. For (P3), notice that $g(\bx_{-d})$ can be viewed as a ReLU neural network with the same depth as $g_i(x_i)$ but $(d-1)$-times the width. 
	
	\subsection{Global convergence analysis}
	\label{sec:rate_global}
	\noindent\textit{Proof of Theorem \ref{thm:global_estimator_eff}}~
	\begin{proof}
		Choose $\xi=1/n^2$, $M=\log n$.
		Notice that $nM\xi(d-1) \to 0$, i.e., $\PP(E_\xi)\to 1$ as $n\to\infty$ for any $d \ge 2$. 
		Without loss of generality, assume $n=\Omega(\epsilon_0^{-(1+\rho)})$. Thus $\xi$ is smaller than $\epsilon_0$. 
		Let 
		\[\kappa^-_{\bj_{-d}}:=\min_{\bx\in D_{\bj_{-d}}} K(\bx)
		\quad\mbox{and}\quad \kappa^+_{\bj_{-d}}:=\max_{\bx\in D_{\bj_{-d}}} K(\bx).\] 
		Since $R_{n,\bj_{-d}}(\hat{f}_{n})=R_{n,\bj_{-d}}(\hat{f}_{n, \bj_{-d}})\le R_{n,\bj_{-d}}(\tilde{f}_{n})$ for any $\bj_{-d}\in J_M$ as in (\ref{eqn:local}),
		Theorem \ref{thm:local_estimator} yields that
		\begin{align*}
			\sup_{f^*\in \cF(d^*,\beta^{*}) }\EE(R_{\bj_{-d}}(\hat{f}_{n})-R_{\bj_{-d}}({f^*})) 
			\lesssim  \rbr{\frac{1}{n}}^{\frac{\kappa^-_{\bj_{-d}}+1 }{\kappa^-_{\bj_{-d}}+2 +\rbr{\frac{\kappa^-_{\bj_{-d}}+1}{\kappa^+_{\bj_{-d}} +1}}\rho\kappa^+_{\bj_{-d}}}}\cdot \log^4(n).
		\end{align*}
		Then, the overall 0-1 loss excess risk can be decomposed as
		\begin{align*}
			\sup_{f^*\in \cF(d^*,\beta^{*}) }\EE(R(\hat{f}_{n})-R({f^*})) &\le 
			\sum_{\bj_{-d}\in J_M} \sup_{f^*\in \cF(d^*,\beta^{*}) }\EE(R_{\bj_{-d}}(\hat{f}_n)-R_{\bj_{-d}}({f^*}))\\
			&\lesssim  \sum_{\bj_{-d}\in J_M} \rbr{\frac{1}{n}}^{\frac{\kappa^-_{\bj_{-d}}+1 }{\kappa^-_{\bj_{-d}}+2 +\rbr{\frac{\kappa^-_{\bj_{-d}}+1}{\kappa^+_{\bj_{-d}} +1}}\rho\kappa^+_{\bj_{-d}}}}\cdot \log^4(n).
		\end{align*}
		By assumption (M2), we can write for any $\bj_{-d} \in J_M$ that
		\begin{align*}
			\rbr{\frac{1}{n}}^{\frac{\kappa^-_{\bj_{-d}}+1}{\kappa^-_{\bj_{-d}}+2+\rbr{\frac{\kappa^-_{\bj_{-d}}+1}{\kappa^+_{\bj_{-d}} +1}}\rho\kappa^+_{\bj_{-d}}}} 
			&=  
			\rbr{\frac{1}{n}}^{\frac{\kappa^-_{\bj_{-d}}+1}{\kappa^-_{\bj_{-d}}+2+\rho\kappa^-_{\bj_{-d}}+\rho\frac{\kappa_{\bj_{-d}}^+-\kappa_{\bj_{-d}}^-}{\kappa_{\bj_{-d}}^++1}}}\\
			&\le 
			\rbr{\frac{1}{n}}^{\frac{\kappa^-_{\bj_{-d}}+1}{\kappa^-_{\bj_{-d}}+2+\rho\kappa^-_{\bj_{-d}}+\rho{C_K(\sqrt{d} /M)^\alpha}}}\\
			&= 
			\rbr{\frac{1}{n}}^{\frac{\kappa^-_{\bj_{-d}}+1}{\kappa^-_{\bj_{-d}}+2+\rho\kappa^-_{\bj_{-d}}} + \frac{(\kappa^-_{\bj_{-d}}+1)\rho C_K(\sqrt{d} /M)^\alpha}{(\kappa^-_{\bj_{-d}}+2+\rho\kappa^-_{\bj_{-d}}+\rho{C_K(\sqrt{d} /M)^\alpha})(\kappa^-_{\bj_{-d}}+2+\rho\kappa^-_{\bj_{-d}})}}\\
			&= 
			O\rbr{\frac{1}{n}}^{\frac{\kappa^-_{\bj_{-d}}+1}{\kappa^-_{\bj_{-d}}+2+\rho\kappa^-_{\bj_{-d}}}}.
		\end{align*}
		The last equality follows from the fact that $M = \log n$ and $n^{-1/\log n} = O(1)$. 
		Since $\kappa$ is defined as the overall minimum, under $E_\xi$, we have
		\begin{align*}
			\sup_{f^*\in \cF(d^*,\beta^{*}) }\EE(R(\hat{f}_n)-R({f^*})) 
			&\lesssim  \sum_{{\bj_{-d}}\in J_M} \rbr{\frac{1}{n}}^{\frac{\kappa^-_j+1 }{\kappa^-_j+2 +\rho\kappa^-_j}}\cdot \log^4(n)\\
			&=O\rbr{n^{-\frac{(\kappa+1)\beta^{*}}{(\kappa+2)\beta^{*}+\kappa d^*}}(\log n)^{d+3}}.
		\end{align*}
	\end{proof}

	\begin{remark}[$\xi$, $\epsilon$ and $B_n$]
		$E_\xi$ is defined for technical purposes to join local regions $D_{\bj_{-d}}$ so that \eqref{eqn:global} can hold with high probability. 
		Note that in order to satisfy $nM^{d-1}\xi^{d-1}\to 0$, $\xi$ can be chosen to be arbitrarily small, e.g., much smaller than the required approximation error $\epsilon$. However, this is at the expense of larger DNN derivatives (piecewise linear part in the helper functions in the construction process) within the $D_\xi$ band. 
		See Figure \ref{fig:ab} for illustration. 
		If $\xi= o(\epsilon)$, the Lipschitz constants of the constructed $c$ and $h$ in the Section \ref{sec:p13} will tend to infinity, which means the parameters of $c$ and $h$ will diverge, i.e., the corresponding $B_n$ of $\cF_n$ will not be bounded. 
		Although the ReLU network family considered in \cite{schmidt2020nonparametric} has constrained $B_n=O(1)$, an unbounded $B_n$ is not a problem, and such constraint is not employed in \citep{lu2020deep}. 
	\end{remark}
	\begin{figure}
		\centering
		\includegraphics[width=0.45\textwidth]{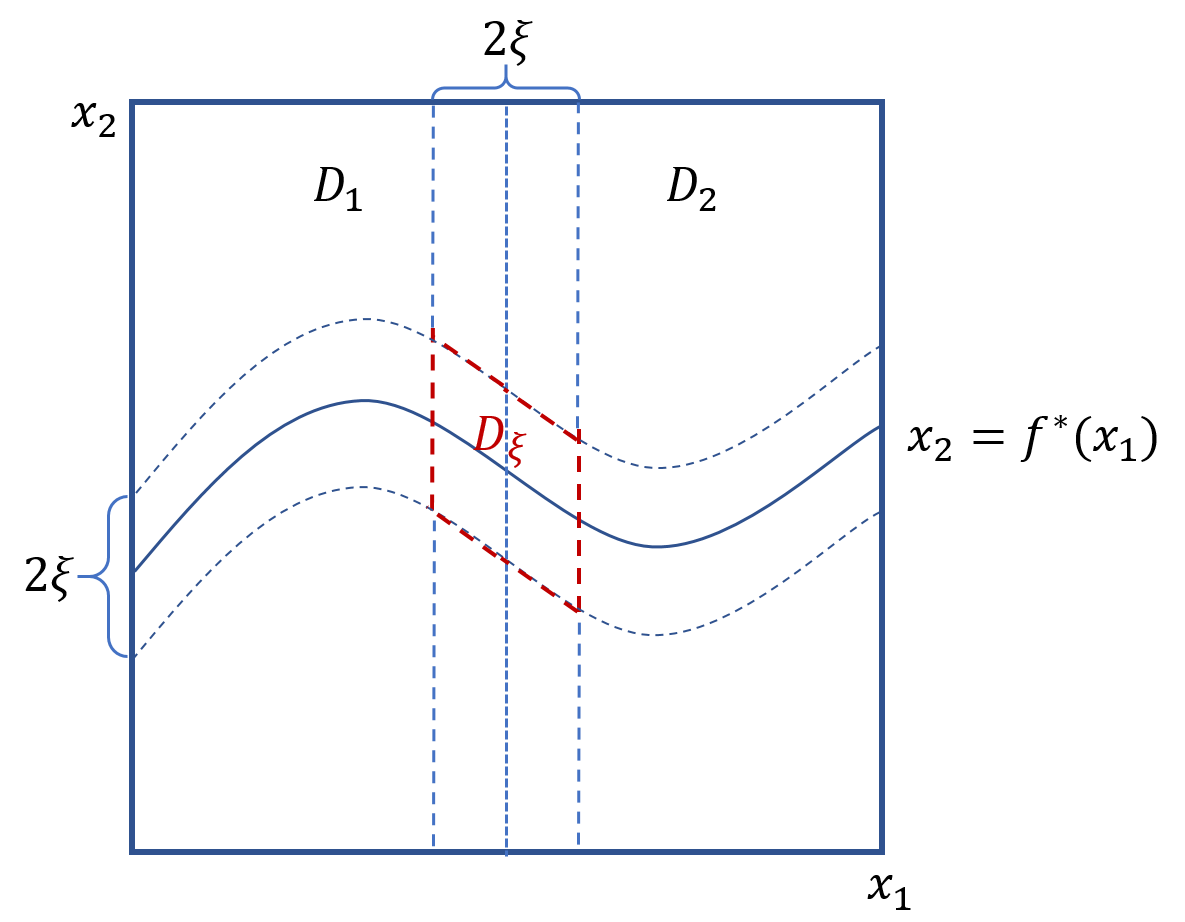}
		\caption{Illustration of region $D_\xi$ in $d=2, M=1$ case.}
		\label{fig:E2}
	\end{figure}
	In Theorem \ref{thm:global_estimator}, if we want $B_n=O(1)$, we can utilize the approximation results with such constraint, e.g., Lemma \ref{smooth} \citep{schmidt2020nonparametric}, and modify the region $D_\xi$ to be more efficient as shown in Figure \ref{fig:E2}, where we have
	$
	\PP(x\in D_\xi)\lesssim M\xi^2 d.
	$
	If we further assume $\rho<1$ and 
	$\kappa^->\frac{2}{1-\rho}$, we can ensure $\epsilon=O(\xi)$ by choosing $\xi\asymp n^{-\frac{\kappa^-}{\kappa^-+2+\rho\kappa^-}}$. In this case, $nM\xi^2 d\to 0$ and $\PP(E_\xi)\to 1$ as $n\to \infty$.

	\subsection{Lower bound - proof of Theorem \ref{thm:lower}}

	The lower bound result comes from estimation of sets in the discriminative analysis \citep{mammen1999smooth} where two independent samples $\cX^+=\{\bx^+_1,\cdots, \bx^+_n\}$ and $\bx^-=\{\bx^-_1,\cdots, \bx^-_m\}$ of $\RR^d$-valued i.i.d. observations with unknown densities $f$ or $g$ respectively (w.r.t. a $\sigma$-finite measure $Q$) are given. The goal is to predict whether a new sample $\bx$ is coming from $f$ or $g$ with a discrimination decision rule defined by a set $G\subset \RR^d$ that we attribute $\bx$ to $f$ if $\bx\in G$ and to $g$ otherwise. Let the Bayes risk to be
	$$R(G) = \frac{1}{2}\left(\int_{G^c}f(\bx)Q(d\bx)+\int_G g(\bx)Q(d\bx)\right)$$
	Denote $G^* = \{\bx:f(\bx)\ge g(\bx)\}$ to be the Bayes risk minimizer and consider the distance between two sets $G_1, G_2$ to be
	$$d_{f, g}(G_1,G_2) =\int_{G_1\triangle G_2}|f(\bx) - g(\bx)|Q(d\bx)$$

	Let $\tilde{G}_{m,n}$ be an empirical rule based on observations.
	The excess risk can be expressed as $R(\tilde{G}_{m,n})-R(G^*)=\frac{1}{2}d_{f,g}(\tilde{G}_{m,n}, G^*)$. In the following, we establish how fast can the excess risk go to zero under the smooth boundary condition. 
	
	For positive constants $c_1, c_2, \eta_0, \kappa$ and for a $\sigma$-finite measure $Q$, consider densities $f, g$ on $\RR^d$ w.r.t. $Q$ and
	define class $\cF$ of paired densities to be 
	\begin{align*}
		\cF_{\cG} = \{&(f,g): Q\{\bx\in \cX: |f(\bx)-g(\bx)|\le \eta\}\le c_2\eta^\kappa \mbox{ for } 0\le\eta\le\eta_0,\\
		&\{\bx\in \cX: f(\bx)\ge g(\bx)\}\in\cG, f(\bx), g(\bx)\le c_1 \mbox{ for } x\in \cX
		\}
	\end{align*} 
	Now let the base measure $Q$ be the Lebesgue measure $\QQ$ and recall $d_{\triangle}(G_1, G_2) = Q(G_1\triangle G_2)$.
	The following lemma establishes the connection between $d_{\triangle}$ and $d_{f,g}$
	\begin{lemma}[Lemma 2 in \citep{mammen1999smooth}]
		\label{tri}
		There exists a constant $c(\kappa)$ depending on $\kappa$ such that for Lebesgue measurable subsets $G_1$ and $G_2$ of $\cX$ and for $(f,g)\in \cF_{\cG}$,
		\begin{align*}
			c(\kappa)d_{\triangle}^{(1+\kappa)/\kappa}(G_1, G_2)\le d_{f, g}(G_1, G_2)\le 2c_1d_{\triangle}(G_1, G_2)
		\end{align*}
	\end{lemma}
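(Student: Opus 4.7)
The upper bound is immediate: since both densities are pointwise bounded by $c_1$, the integrand $|f(\bx)-g(\bx)|$ is everywhere at most $2c_1$, so
\begin{equation*}
d_{f,g}(G_1,G_2)=\int_{G_1\triangle G_2}|f-g|\,dQ\le 2c_1\,Q(G_1\triangle G_2)=2c_1\,d_{\triangle}(G_1,G_2).
\end{equation*}
The interesting direction is the lower bound, and the plan is to use a standard ``peeling'' argument based on the margin condition built into $\cF_{\cG}$.

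For any threshold $\eta\in(0,\eta_0]$ I would split the symmetric difference into a thin-margin piece and a thick-margin piece,
\begin{equation*}
d_{f,g}(G_1,G_2)\ge\int_{(G_1\triangle G_2)\cap\{|f-g|>\eta\}}|f-g|\,dQ\ge \eta\Big(d_{\triangle}(G_1,G_2)-Q\{|f-g|\le\eta\}\Big).
\end{equation*}
The membership $(f,g)\in\cF_{\cG}$ then gives $Q\{|f-g|\le\eta\}\le c_2\eta^{\kappa}$, yielding $d_{f,g}(G_1,G_2)\ge\eta\,d_{\triangle}(G_1,G_2)-c_2\eta^{\kappa+1}$. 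The right-hand side is a concave function of $\eta$, maximized at $\eta^{\star}=\bigl(d_{\triangle}(G_1,G_2)/(c_2(\kappa+1))\bigr)^{1/\kappa}$, and substituting back produces a bound of the form $c(\kappa)\,d_{\triangle}^{(\kappa+1)/\kappa}$ with an explicit constant depending only on $\kappa$ and $c_2$.

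The one subtlety, and what I expect to be the main obstacle, is that the margin inequality is only guaranteed for $\eta\le\eta_0$, so the optimizer $\eta^{\star}$ is admissible only when $d_{\triangle}(G_1,G_2)$ is below a threshold of order $\eta_0^{\kappa}$. To handle the complementary regime I would just take $\eta=\eta_0$ in the two-term bound, which gives $d_{f,g}\ge\eta_0\,d_{\triangle}-c_2\eta_0^{\kappa+1}$; since $d_{\triangle}(G_1,G_2)\le Q(\cX)$ is bounded (or at worst of order $1$), this linear bound can be converted to the desired $d_{\triangle}^{(\kappa+1)/\kappa}$ form by enlarging the constant $c(\kappa)$. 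Combining the two regimes yields a single constant $c(\kappa)$ valid uniformly, which is exactly the stated inequality.
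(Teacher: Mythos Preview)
Your argument is correct. The paper does not supply its own proof of this lemma; it is quoted verbatim as Lemma~2 of \citet{mammen1999smooth} and used as a black box in the lower-bound proof, and your peeling-on-the-margin argument is essentially the standard proof from that reference.
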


	\noindent\textit{Proof of Theorem \ref{thm:lower}}~
	
	\begin{proof}
		Without loss of generality, assume $n\le m$ so we mainly focus on $\cX^+$. Consider the subset of $\cF_{\cG_h}$ that contains all pairs $(f,g_0)$, where $g_0$ is fixed and $f$ belongs to a finite class of densities $\cF_1$ that will be defined later. Then,
		\begin{align*}
			\sup_{(f,g)\in \cF_{\cG_h}}\EE_{f, g}d_{\triangle}(\tilde{G}_{m,n}, G^*)&\ge\sup_{(f,g_0):f\in\cF_1} \EE_{f,g}d_{\triangle}(\tilde{G}_{m,n}, G^*)\\
			&\ge \EE_{g_0}\left[\frac{1}{|\cF_1|}\sum_{f\in\cF_1}\EE_f[d_\triangle(\tilde{G}_{m,n}, G^*)|y_1, \cdots, y_m]\right]
		\end{align*}
		where $\EE_f$ and $\EE_{g_0}$ denotes the expectations w.r.t. the distributions of $(x_1,\cdots, x_n)$ and $(y_1,\cdots,y_m)$ when the underlying densities are $f$ and $g_0$.
		
		Recall the compositional assumption (\ref{comp}) and let $$i^* \in \argmax_{i=0,1,\cdots, q} n^{-\frac{2\bar{\beta}_i}{2\bar{\beta}_i+t_i}}\quad \mbox{  and  } \quad \beta^* = \beta_{i^*}$$
		Further denote
		$B=\prod_{l=i^*+1}^q(\beta_l\wedge 1)$ and then $\beta^{*} = \beta^* B$.
		For simplicity, we give the proof for the case $d^*=t_{i^*}=1$, that is the effective dimension of the smooth boundaries is 1 instead of $d-1$. For this case, let $\phi\in \cC_1^{\beta^*}(\RR, 1)$ be a real-valued function supported on $[-1,1]$ with $\phi(t)\ge 0$ for all $t$, $\max \phi(t)=1$ and $\phi(0)=1$. For $x=(x_1, \cdots, x_d)\in [0,1]^d$, define
		\begin{align*}
			g_0(\bx)= &(1-\eta_0-b_1)\II_{\{0<x_2<\frac{1}{2}\}} + \II_{\{\frac{1}{2}\le x_2<\frac{1}{2}+(\tau M^{-\beta^*})^B\}} \\
			&+(1+\eta_0+b_2)\II_{\{\frac{1}{2}+(\tau M^{-\beta^*})^B\le x_2\le 1\}}
		\end{align*}
		where $M\ge 2$ is an integer to be specified later and $\tau\in(0,1)$ is a constant. $b_1 = (\tau M^{-\beta^*}/c_2)^{B/\kappa}$ and $b_2>0$ is chosen such that $g_0$ integrates to 1.
		For $j=1,2,\cdots, M$ and $t\in[0,1]$, let 
		$$\psi_j(t) = \tau M^{-\beta^*}\phi\left(M\left[t-\frac{j-1}{M}\right]\right)$$
		Note that $\psi_j$ is only supported on $[\frac{j-1}{M}, \frac{j}{M}]$. 
		For vectors $\omega = (\omega_1,\cdots,\omega_M)$ with elements $w_j\in\{0,1\}$, define
		\begin{equation}
			b_\omega(t) = \sum_{j=1}^M \omega_j\psi_j(t)
		\end{equation}
		Now we construct functions in $\cH(l, \mathbf{d}, \mathbf{t}, \bbeta, R)$. For $i< i^*$, let $g_i(\bx):=(x_1, \cdots, x_{d_i})^\intercal$. For $i=i^*$ define $g_{i^*,\omega}(\bx)=(b_\omega(x_1), 0,\cdots, 0)^\intercal$. For $i>i^*$, set $g_i(\bx):=(x_1^{\beta_i\wedge 1},0,\cdots, 0)^\intercal$. 
		\begin{align*}
			\tilde{b}_{\omega}(\bx) &= g_q\circ \cdots\circ g_{i^*+1}\circ g_{i^*, \omega}\circ g_{i^*-1}\circ\cdots\circ g_0(\bx)=b_\omega(x_1)^B
		\end{align*}
		Notice that $\tilde{b}_\omega(\bx)\le (\tau M^{-\beta^*})^B$. Let $\Omega=\{0,1\}^M$. Define
		\begin{align*}
			f_\omega(\bx)& = 1+\left[\frac{\frac{1}{2}+ (\tau M^{-\beta^*})^B-x_2}{c_2}\right]^{1/\kappa}\II_{\{\frac{1}{2}\le x_2\le\frac{1}{2}+\tilde{b}_\omega(\bx)\}}-b_3(\omega)\II_{\{\frac{1}{2}+\tilde{b}_\omega(\bx)< x_2\le 1\}}
		\end{align*}
		where $b_3(\omega)>0$ is chosen such that $f_\omega(x)$ integrates to 1. Note that both $g_0(\bx)$ and $f_\omega(\bx)$ are $d$-dimensional densities even though they seem to only depend on $x_1$ and $x_2$. Other entries follow independent uniform distribution on $[0,1]$ and don't show on the density formulas.
		
		Set $\cF_1 = \{f_\omega: \omega\in \Omega\}$ and we will show that $(f_\omega, g_0)\in\cF_{\cG_h}$ for all $\omega\in\Omega$. To this end, we need to verify that 
		\begin{itemize}
			\item[(a)] 
			$f_\omega(\bx)\le c_1$ for $x\in K$; 
			\item[(b)]
			$\{\bx\in \cX: f_\omega(\bx)\ge g_0(\bx)\}\in \cG_h$;
			\item[(c)]
			$Q\{\bx\in \cX: |f_\omega(\bx)-g_0(\bx)|\le\eta\}\le c_2\eta^{\kappa}$ for all $0<\eta<\eta_0$.
		\end{itemize} 
		For (a), since $f_\omega$ integrates to 1, 
		\begin{align*}
			b_3(\omega)&\le \max_{\{\frac{1}{2}\le x_2\le\frac{1}{2}+\tilde{b}_\omega(\bx)\}} \left[\frac{\frac{1}{2}+ (\tau M^{-\beta^*})^B-x_2}{c_2}\right]^{1/\kappa}\\
			& \le \left[\frac{2\tau^B M^{-\beta^{*}}}{c_2}\right]^{1/\kappa}=\cO(M^{-\beta^{*}/\kappa})
		\end{align*}
		
		Thus, $f_\omega(\bx)\le c_1$ for $c_1$ and $M$ large enough. 
		
		(b) is satisfied since
		\begin{align*}
			\{\bx: f_\omega(\bx)\ge g_0(\bx)\} = \{\bx: 0\le x_2\le \frac{1}{2}+\tilde{b}_\omega(x_1)\}    
		\end{align*}
		and by construction, $\tilde{b}_\omega(\bx)\in \cH(l, \mathbf{d}, \mathbf{t}, \bbeta, R)$ for $\tau$ small enough.
		
		(c) follows that
		\begin{align*}
			&Q\{\bx\in \cX: |f_\omega(\bx)-g_0(\bx)|\le\eta\}\\
			\le& Q\{\bx\in \cX: \frac{1}{2}\le x_2\le \frac{1}{2}+(\tau M^{-\beta^*})^B, \left[\frac{1/2+(\tau M^{-\beta^*})^B-x_2}{c_2}\right]^{1/\kappa}\le\eta\}\\
			\le &Q \{\bx\in \cX: \frac{1}{2}+ (\tau M^{-\beta^*})^B-c_2\eta^\kappa \le x_2 \le \frac{1}{2}+(\tau M^{-\beta^*})^B\}\\
			\le & c_2\eta^\kappa
		\end{align*}
		After verifying $(f_\omega, g_0)\in\cF_{\cG_h}$ for all $\omega\in\Omega$, we now establish how fast can 
		$$
		S:=\frac{1}{|\cF_1|}\sum_{f\in\cF_1}\EE_f[d_\triangle(\tilde{G}_{m,n}, G^*)|y_1, \cdots, y_m]
		$$ 
		go to zero. To this end, we use the Assouad's lemma stated in \citep{korostelev2012minimax}
		which is adapted to the estimation of sets. 
		
		For $j=1, \cdots, M$ and for a vector $\omega=(\omega_1,\cdots,\omega_M)$, we write 
		\begin{align*}
			& \omega_{j0} = (\omega_1,\cdots, \omega_{j-1}, 0 , \omega_{j+1},\cdots, \omega_M)\\
			&   \omega_{j1} = (\omega_1,\cdots, \omega_{j-1}, 1 , \omega_{j+1},\cdots, \omega_M)
		\end{align*}
		For $i=0$ and $i=1$, let $P_{ji}$ be the probability measure corresponding to the distribution of $x_1,\cdots, x_n$ when the underlying density is $f_{\omega_{ji}}$. Denote the expectation w.r.t. $P_{ji}$ as $\EE_{ji}$. Let 
		\begin{align*}
			\cD_j &= \{\bx\in \cX: \frac{1}{2}+\tilde{b}_{\omega_{j0}}(\bx)<x_2\le\frac{1}{2}+\tilde{b}_{\omega_{j1}}(\bx)\}\\
			&=\{\bx\in \cX: b_{\omega_{j0}}(x_1)<\left(x_2-\frac{1}{2}\right)^{1/B}\le b_{\omega_{j1}}(x_1)\}\\
			& = \{\bx\in \cX:b_{\omega_{j0}}(x_1)<\left(x_2-\frac{1}{2}\right)^{1/B}\le b_{\omega_{j0}}(x_1)+ \psi_j(x_1)\}
		\end{align*}
		Then 
		\begin{align*}
			S&\ge\frac{1}{2}\sum_{j=1}^M Q\{\cD_j\}\int\min\{dP_{j1},dP_{j0}\}\\
			&\ge \frac{1}{2}\sum_{j=1}^M\int_0^1 \psi_j(x_1)^Bdx_1\int\min\{dP_{j1},dP_{j0}\}\\
			&\ge \frac{1}{2}\sum_{j=1}^M \tau^B M^{-\beta^{*}}\int\phi(Mt)^Bdt\int\min\{dP_{j1},dP_{j0}\}\\
			&\ge \frac{1}{4}\sum_{j=1}^M \tau^B M^{-\beta^{*}}\int\phi(Mt)^Bdt\left[1-H^2(P_{10},P_{11})/2\right]^n
		\end{align*}
		where $H(\cdot,\cdot)$ denotes the Hellinger distance. 
		
		\begin{align*}
			H^2(P_{10},P_{11}) =& \int \left[\sqrt{f_{\omega_{10}}(\bx)}-\sqrt{f_{\omega_{11}}(\bx)}\right]^2d\bx\\
			\le&
			\int_{0}^1\Bigg\{\int_{\frac{1}{2}}^{\frac{1}{2}+\psi_1(x_1)^B}\left[1-\sqrt{1+\left(\frac{\frac{1}{2}+\tau^B M^{-\beta^{*}}-x_2}{c_2}\right)^{1/\kappa}}\right]^2dx_2 \\
			&+
			\int_{\frac{1}{2}}^1\left[\sqrt{1-b_3(\omega_{10})}-\sqrt{1-b_3(\omega_{11})}\right]^2dx_2\Bigg\}dx_1\\
			\le&
			\int_{0}^1\int_{(\tau M^{-\beta^{*}})^B-\psi_1(x_1)^B}^{(\tau M^{-\beta^{*}})^B}\left[1-\sqrt{1+\left(\frac{v}{c_2}\right)^{1/\kappa}}\right]^2dvdx_1 \\
			&+
			|b_3(\omega_{10})-b_3(\omega_{11})|^2
		\end{align*}
		For the first term,
		\begin{align*}
			&\int_{0}^1\int_{\tau^B M^{-\beta^{*}}-\psi_1(x_1)^B}^{\tau^B M^{-\beta^{*}}}\left[1-\sqrt{1+\left(\frac{v}{c_2}\right)^{1/\kappa}}\right]^2dvdx_1\\
			&\le \int_{0}^1\int_{\tau^B M^{-\beta^{*}}-\psi_1(x_1)^B}^{\tau^B M^{-\beta^{*}}}\left(\frac{v}{c_2}\right)^{2/\kappa}dvdx_1 \\
			&\le\frac{\kappa c_2^{-2/\kappa}}{\kappa+2}\int_0^1\left(\tau^B M^{-\beta^{*}}\right)^{1+2/\kappa}- \left(\tau^B M^{-\beta^{*}}-\psi_1(x_1)^B\right)^{1+2/\kappa}dx_1\\
			&\le\frac{\kappa c_2^{-2/\kappa}}{\kappa+2}\left(\tau^B M^{-\beta^{*}}\right)^{1+2/\kappa}\int\left(1-(1-\phi(Mt)^B)^{1+2/\kappa}\right)dt\\
			&=\cO\left(M^{-\beta^{*}(1+2/\kappa)-1}\right)
		\end{align*}
		On the other hand, 
		\begin{align*}
			\int_0^1\int_{1/2}^{1/2 + b_\omega(x_1)^B}\left[\frac{\frac{1}{2}+ \tau^B M^{-\beta^{*}}-x_2}{c_2}\right]^{1/\kappa}dx_2dx_1 = b_3(\omega)\left[\frac{1}{2}-b_\omega(x_1)^B\right]
		\end{align*}
		yields
		\begin{align*}
			b_3(\omega_{11})&=\frac{1}{\frac{1}{2}-b_{\omega_{11}}(x_1)^B}\int_0^1\int_{1/2}^{1/2 + b_{\omega_{11}}(x_1)^B}\left[\frac{\frac{1}{2}+ \tau^B M^{-\beta^{*}}-x_2}{c_2}\right]^{1/\kappa}dx_2dx_1\\
			&\le \frac{M c_2^{-1/\kappa}}{\frac{1}{2}-\tau^B M^{-\beta^{*}}}\int_0^1\int_{\tau^B M^{-\beta^{*}}(1-\phi(Mx_1))}^{\tau^B M^{-\beta^{*}}}
			u^{1/\kappa}dudx_1\\
			&=\frac{Mc_2^{-1/\kappa}\tau^B}{(\frac{1}{2}-\tau^B M^{-\beta^{*}})(1+1/\kappa)}
			M^{-\beta^{*}(1+1/\kappa)}\int(1-(1-\phi(Mt)^B)^{1+1/\kappa})dt\\
			&\le\frac{c_2^{-1/\kappa}\tau^B}{(\frac{1}{2}-\tau^B M^{-\beta^{*}})(1+1/\kappa)}
			M^{-\beta^{*}(1+1/\kappa)}\\
			&=\cO(M^{-\beta^{*}(1+1/\kappa)}) 
		\end{align*}
		Hence $|b_3(\omega_{11})-b_3(\omega_{10})| = \cO(M^{-\beta^{*}(1+1/\kappa)-1}) $ and we have
		\begin{align*}
			H^2(P_{10}, P_{11})&=\cO\left(M^{-\beta^{*}(1+2/\kappa)-1} \vee M^{-\beta^{*}(2+2/\kappa)-2}\right) \\
			&=\cO\left(M^{-\beta^{*}(1+2/\kappa)-1}\right)
		\end{align*}
		Now choose $M$ as the smallest integer that is larger or equal to 
		$$n^{\frac{\kappa}{(2+\kappa)\beta^{*}+\kappa}}$$
		Then we have $ H^2(P_{10}, P_{11})\le C^* n^{-1}\left(1+ o(1)\right)$ for some constant $C^*$ depending only on $\kappa, c_2,\tau, \phi$ and 
		$$\int\min\{dP_{j1},dP_{j0}\}\ge \frac{1}{2}\left[1-\frac{C^*}{2}n^ {-1}(1+o(1))\right]^n\ge C_1^*$$
		for $n$ large enough and $C_1^*$ is another constant. Thus for $n$ large enough,
		\begin{align*}
			S\ge \frac{1}{2}C_1^*\tau^B M^{-\beta^ {*}}\int \phi(t)dt\ge C_2^*n^{-\frac{\kappa\beta^{*}}{(2+\kappa)\beta^{*}+\kappa}}
		\end{align*}
		The constant $C_2^*$ only depends on $\kappa, c_2,\tau$ and $ \phi$.
		
		Combining all the results so far yields that 
		\begin{align*}
			\liminf_{n\to\infty}\inf_{\tilde{G}_{m,n}}\sup_{(f,g)\in \cF_{\cG_{h}}}
			(n\wedge m)^{\frac{\beta^{*}\kappa}{\beta^{*}(\kappa+2)+d^*\kappa }}\EE_{f,g}[d_{\triangle}(\tilde{G}_{m,n},G^*)]>0
		\end{align*}
		holds when $d^*=1$. Using Lemma \ref{tri}, we have
		\begin{align*}
			\liminf_{n\to\infty}\inf_{\tilde{G}_{m,n}}\sup_{(f,g)\in \cF_{\cG_{h}}}
			(n\wedge m)^{\frac{\beta^{*}(\kappa+1)}{\beta^{*}(\kappa+2)+d^*\kappa }}\EE_{f,g}[d_{f, g}(\tilde{G}_{m,n},G^*)]>0
		\end{align*}
	\end{proof}
	Note that under the composition smoothness assumption on the decision boundaries, existing upper bounds are suboptimal. The sub optimality comes from the $\kappa$ term in the denominator of the rates and the $\log n$ term. Instead of $\kappa$ as in the lower bound, we have $\kappa+1$ in the upper bound. 
	
	\subsection{Numerical experiment details}
	\label{sec:exp}
	All experiments are conducted using PyTorch. The ReLU neural networks are coded with the \texttt{torch.nn} package, with the default initialization. Sample size is chosen to be 1000 for the experiments in Figure \ref{fig:acc}. 
	The neural network training is done by stochastic gradient descent (initial learning rate=0.1, momentum=0.9, weight decay=0.001). The batch size is chosen to be 100. The total iteration number is 10000, with learning rate decayed by 1/10 every 2000 steps. 
	To make a fair comparison, we fix the random seed for the data generating process. The randomness comes from network initialization and batch selection. Test accuracy is evaluated by sampling one million test data points.  
	When producing Figure \ref{fig:rate}, we choose $k=10$ and $n=200, 800, 3200, 12800$. Each point in the figure is an average from 10 replications.

	\vskip 0.2in
	\bibliography{references}

\end{document}